\newtheorem{thm}{Theorem}[section]
\newtheorem{cor}[thm]{Corollary}
\newtheorem{lemma}[thm]{Lemma}
\newtheorem{definition}{Definition}[section]
\newcommand{\R}{{\mathbb{R}}}
\newcommand{\e}{\epsilon}
\newcommand{\vp}{\varphi}
\newcommand{\osc}{\operatornamewithlimits{osc}}
\newcommand{\D}{\nabla}
\newcommand{\La}{\triangle}
\newcommand{\bs}{\backslash}
\begin{document}

\title{H\"older Estimates for Singular Non-local Parabolic Equations}

\author[Sunghoon Kim]{Sunghoon Kim}

\address{Sunghoon Kim : School of Mathematical Sciences,
Seoul National University, San56-1 Shinrim-dong Kwanak-gu Seoul
151-747,South Korea } \email{ gauss79@snu.ac.kr }

\author[Ki-Ahm Lee]{Ki-Ahm Lee}

\address{Ki-Ahm Lee : School of Mathematical Sciences,
Seoul National University, San56-1 Shinrim-dong Kwanak-gu Seoul
151-747,South Korea } \email{ kiahm@math.snu.ac.kr }

\begin{abstract}
In this paper, we establish local H\"older estimate for non-negative
solutions of the singular equation \eqref{eq-nlocal-PME-1} below,
for $m$ in the range of exponents $(\frac{n-2\sigma}{n+2\sigma},1)$.
Since we have trouble in finding the local energy inequality of $v$
directly. we use the fact that the operator $(-\La)^{\sigma}$ can be
thought as the normal derivative of some extension $v^{\ast}$ of $v$
to the upper half space, \cite{CS}, i.e., $v$ is regarded as
boundary value of $v^{\ast}$ the solution of some local extension
problem. Therefore, the local H\"older estimate of $v$ can be
obtained by the same regularity of $v^{\ast}$. In addition, it
enables us to describe the behaviour of solution of non-local fast
diffusion equation near their extinction time.
\end{abstract}

\maketitle

\setcounter{equation}{0}
\setcounter{thm}{0}

\section{Introduction}

\setcounter{equation}{0}
\setcounter{thm}{0}

In this paper, we consider initial value problem with fractional
fast diffusion:
\begin{equation}\label{eq-frac-FDE}
\begin{cases}
\begin{aligned}
&(-\La)^{\sigma}u^m+u_t=0 \qquad \qquad\qquad \mbox{in
$\Omega$}\\
&\qquad \quad u=0 \qquad \qquad \qquad \qquad \quad \mbox{on $\R^n\bs\Omega$}\\
&\quad u(x,0)=u_0(x) \qquad \mbox{non-negative and
$\dot{H}^{\sigma}_0$-bounded}
\end{aligned}
\end{cases}
\end{equation}
in the range of exponents $\frac{n-2\sigma}{n+2\sigma}<m<1$, with
$0<\sigma<1$. The fractional Laplacian of a function $f:\R^n\to\R$
is expressed by the formula
\begin{equation*}
(-\La)^{\sigma}f(x)=C_{n,\sigma}\int_{\R^n}\frac{f(x)-f(y)}{|x-y|^{n+2\sigma}}\,dy
\end{equation*}
where $C_{n,\sigma}$ is some normalization constant. In addition,
the norm in $\dot{H}^{\sigma}$ is given precisely by
\begin{equation}\label{H-space}
\|f\|_{\dot{H}^{\sigma}}=\sqrt{\int_{\R^n}\int_{\R^n}\frac{|f(x)-f(y)|^2}{|x-y|^{n+2\sigma}}dxdy}.
\end{equation}
That is equivalent to
\begin{equation*}
\|f\|_{\dot{H}^{\sigma}}\cong \sqrt{\int_{\R^n}
|\xi|^{2\sigma}|\hat{f}(\xi)|^2d\xi}.
\end{equation*}
for Fourier transform of $f$ in $x$. Note that the Sobolev embedding
results say that $\dot{H}^{\sigma}\subset L^{2n/(n-2\sigma)}$ (Chap
V in \cite{St}). Indeed, $\dot{H}^{\sigma}$ is the space of
$L^{2n/(n-2\sigma)}$
functions for which \eqref{H-space} is integrable. \\
\indent Several regularity results in the non-linear theory of
elliptic and parabolic differential equations are based on H\"older
estimates for linear equations with measurable coefficients.
H\"older estimates can be drived from the Harnack inequalities as
the Theorem 8.22 in \cite{GT}. In \cite{De}, E. De Giorgi proved a
H\"older estimate for second order uniformly elliptic equations in
divergence form with measurable coefficients, and the regularity of
minimizers for non-linear convex functionals followed from there.
Chen and DiBenedetto study in \cite{YD2} the question of H\"older
estimates of solutions of singular parabolic equations with
measurable coefficients of the $p$-Laplacian type, and the methods
apply to the porous medium equations in the fast diffusion range.
The Harnack inequality for non-negative solutions of singular
parabolic equations is proved in \cite{YD3}. The equation may have
bounded measurable coefficients and have the form
$u_t-(a_{ij}(x,t)|u|^{m-1}u_{x_i})_{x_j}=0$ with
$\frac{n-2\sigma}{n+2\sigma}<m<1$. The authors use an iteration
method in the context of quasi-linear singular parabolic equations
that is different from the classical
iteration techniques of E. De Giorgi \cite{De}. \\
\indent In the non-divergent case, the corresponding result was
obtained by Krylov and Safanov \cite{KS}, and it is an essential
tool in proving $C^{1,\beta}$
regularity for fully nonlinear elliptic equations. \\
\indent The regularity of harmonic functions with respect to
non-local operators was studied in several recent papers like
\cite{BK} and \cite{BL}, however their point of view is
probabilistic. The pure analytic point of view for regularity
appears in \cite{Si}, which deals with non-divergence structure.
Caffarelli and Vasseur \cite{CV} show that, using Digiorgi type
iterative techniques, a certain class of weak solutions of the
quasi-geostrophic equation with initial $L^2$ data and critical
diffusion $(-\La)^{\frac{1}{2}}$ gain local H\"older regularity for
any space dimension. In \cite{CV}, they consider the harmonic
extension problem corresponding to the original one in order to
avoid difficulties stem from the non-locality. \\
\indent We are interested in studying of the properties of
non-linear eigenvalue problems with fractional powers: regularities,
geometric properties, etc. However, there are lots of difficulties
to be solved directly. For example, the geometric properties of the
first eigenfunction for the fractional Laplacian
$-(-\La)^{\frac{b}{2}}$ is sill unsolved. More precisely, the
conjecture 1.1 in \cite{RTP}, \emph{let $\vp_1^{b}$ be the ground
state eigenfunction for the symmetric stable processes of index
$0<b<2$ killed upon leaving the interval
$I=(-1,1)$. Then is $\vp_1^{b}$ concave on $I$?}\\
\indent Many methods is devised to treat non-linear eigenvalue
problems. One of them is parabolic method, \cite{LV}, which relies
on the fact that the non-linear eigenvalue problem can be described
as
the asymptotic profile of a corresponding parabolic flow.\\
\indent For this reason, in this work, we will deal with a H\"older
regularity of $v=u^m$, which is a solution of
\begin{equation}\tag{M.P}\label{eq-nlocal-PME-1}
\begin{cases}
\begin{aligned}
(-\La)^{\sigma}v&+\left(v^{\frac{1}{m}}\right)_t=0 \qquad
\qquad\qquad \mbox{in
$\Omega$}\\
v&=0 \qquad \qquad \qquad \qquad \quad \mbox{on
$\R^n\bs\Omega$}\\
v(x,0)&=v_0(x)=u_0^m(x) \qquad \qquad \mbox{in $\Omega$},
\end{aligned}
\end{cases}
\end{equation}
assuming that the initial value $v_0$ is strictly positive in the
interior of $\Omega$ in $\R^n$. Main two Theorems state as follows:
\begin{thm}(Boundedness for positive times)\label{thm-L-infty}
\item Let $v(x,t)$ be a function in
$L^{\infty}(0,T;L^{\frac{2n}{n-2\sigma}}(\Omega))\cap L^2(0,T;
\dot{H}^{\sigma}_0(\R^n))$, then
\begin{equation*}
\sup_{x\in\Omega}|v(x,T)|\leq
C^{\ast}\frac{\|v_0\|_{L^{\frac{2n}{n-2\sigma}}(\Omega)}}{T^{\frac{mn}{2mn-(n-2\sigma)(1+m)}}}
\end{equation*}
for some constant $C^{\ast}>0$.
\end{thm}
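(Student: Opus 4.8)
The plan is to run a De Giorgi–Moser iteration directly on the solution $v$ using its energy structure, since $v$ belongs to the natural energy class $L^\infty(0,T;L^{2n/(n-2\sigma)}(\Omega))\cap L^2(0,T;\dot H^\sigma_0(\R^n))$. First I would test the weak formulation of \eqref{eq-nlocal-PME-1} against the truncation $(v-k)_+$ for a level $k>0$. Because the time derivative appears as $(v^{1/m})_t$ rather than $v_t$, the term one produces is $\int \partial_t(v^{1/m})\,(v-k)_+$, which I would rewrite (using $v=u^m$, so $v^{1/m}=u$ and $(v-k)_+$ is a monotone function of $u$) as a time derivative of a convex ``Boltzmann-type'' entropy $\int \Psi_k(v)\,dx$ with $\Psi_k(v)=\int_0^v (s-k)_+\,d(s^{1/m})$. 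The nonlocal term, via the $\dot H^\sigma$ bilinear form, controls $\|(v-k)_+(\cdot,t)\|_{\dot H^\sigma}^2$ from below (here one uses the standard fact that $\langle (-\La)^\sigma v,(v-k)_+\rangle \ge \|(v-k)_+\|_{\dot H^\sigma}^2$). This yields the basic energy inequality: for a.e.\ $t$,
\begin{equation*}
\int \Psi_k(v(\cdot,t))\,dx + \int_0^t \|(v-k)_+\|_{\dot H^\sigma}^2\,ds \;\le\; \int \Psi_k(v(\cdot,0))\,dx.
\end{equation*}

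Next I would set up a geometric sequence of levels $k_j = M(1-2^{-j})$, where $M$ is the constant to be determined (of the asserted form), together with a sequence of times $t_j$ decreasing to $T/2$ (a standard time-localization, since the estimate is for positive times and the initial data is only $L^{2n/(n-2\sigma)}$, not $L^\infty$). On each level I combine the energy inequality with the Sobolev embedding $\dot H^\sigma_0 \subset L^{2n/(n-2\sigma)}$ and Hölder's inequality in space-time to control the ``energy quantity'' $A_j := \sup_t \int \Psi_{k_j}(v)\,dx + \int_0^{t}\|(v-k_j)_+\|_{\dot H^\sigma}^2$ at level $j+1$ by a superlinear power of $A_j$ times a large factor $C^j$. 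The exponents here are exactly where the range $\frac{n-2\sigma}{n+2\sigma}<m<1$ enters: one needs the interpolation exponent to be strictly bigger than $1$, which forces a lower bound on $m$, and the bookkeeping of these exponents produces the precise power $\frac{mn}{2mn-(n-2\sigma)(1+m)}$ of $T$ in the statement. A fast-geometric-convergence lemma (the usual ``if $A_{j+1}\le C\,b^j A_j^{1+\delta}$ and $A_0$ is small enough then $A_j\to 0$'') then gives $A_j\to 0$, i.e. $(v-M)_+ \equiv 0$ on $\Omega\times(T/2,T)$, hence $\sup_{\Omega} v(\cdot,T)\le M$; tracking the smallness condition on $A_0$ against $\|v_0\|_{L^{2n/(n-2\sigma)}}$ fixes $M = C^\ast \|v_0\|_{L^{2n/(n-2\sigma)}} T^{-mn/(2mn-(n-2\sigma)(1+m))}$.

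The main obstacle I anticipate is the handling of the time-derivative term: because the equation is written for $v=u^m$ with a nonlinear time term $(v^{1/m})_t$, one cannot simply integrate by parts against $(v-k)_+$ in the naive way, and one must justify the identification of $\int \partial_t(v^{1/m})(v-k)_+\,dx$ with $\frac{d}{dt}\int\Psi_k(v)\,dx$ rigorously (a Steklov-averaging / approximation argument, taking care that $\Psi_k(v)$ is comparable to $(v-k)_+^{?}$ only after using $1/m>1$). A secondary technical point is that, since $\Psi_k(v)$ is \emph{not} simply a power of $(v-k)_+$ — it behaves like $(v-k)_+ \cdot k^{1/m-1}$ near the level and like $v^{1/m}$ for large $v$ — one must be careful in the iteration to bound $\int\Psi_{k_j}(v)$ below by a genuine power of the measure of $\{v>k_j\}$ times $(v-k_j)_+$, which is exactly what feeds the Sobolev step; getting the exponents in this comparison right is what pins down the final $T$-exponent, and it is the delicate arithmetic of the proof.
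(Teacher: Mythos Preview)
Your proposal is correct and follows essentially the same route as the paper: the paper also tests \eqref{eq-nlocal-PME-1} against $(v-C_k)_+$ with $C_k=N(1-2^{-k})$, obtains exactly your entropy $\Psi_{C_k}(v)=\tfrac1m\int_0^{(v-C_k)_+}(\xi+C_k)^{1/m-1}\xi\,d\xi$, time-localizes via $T_k=t_0(1-2^{-k})$, and closes the recursion using Sobolev/interpolation (with exponent $\tfrac{2m}{1+m}\big(1+\tfrac{(1+m)\sigma}{mn}\big)>1$, precisely your lower bound on $m$) and the standard fast-geometric-convergence lemma. The only cosmetic differences are that the paper absorbs the $\Psi_k$-comparison by a H\"older step bounding $\int v^{1/m-1}v_k^2$ via $\big(\int v^{(1+m)/m}\big)^{(1-m)/(1+m)}\big(\int v_k^{(1+m)/m}\big)^{2m/(1+m)}$ before iterating, and treats the time-derivative manipulation formally rather than via Steklov averaging.
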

For the second theorem, we need better control of $v$.
\begin{thm}[H\"older regularity of fractional FDE]\label{thm-main}
\item For $x_0=(x_0^1,\cdots,x_0^n)$, we define $Q_r(x_0,t_0)=[x_0^i-r,x_0^i+r]^n\times[t_0-r^{2\sigma},t_0]$, for $t_0>r^{2\sigma}>0$.
Assume now that $[x_0^i-r,x_0^i+r]^n\subset\Omega$ and $v(x,t)$ is
bounded in $\R^n\times[t_0-r^{2\sigma},t_0]$, then there exist
constants $\gamma$ and $\beta$ in $(0,1)$ that can be determined a
priori only in terms of the data, such that $v$ is $C^{\beta}$ in
$Q_{\gamma r}(x_0,t_0)$.
\end{thm}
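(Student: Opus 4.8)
The plan is to exploit the Caffarelli--Silvestre extension to replace \eqref{eq-nlocal-PME-1} by a local problem and then run a De Giorgi--Nash--Moser iteration in the spirit of \cite{CV}. Given the bounded solution $v$ on $\R^n\times[t_0-r^{2\sig},t_0]$, let $v^{\ast}=v^{\ast}(x,y,t)$ be, for each fixed $t$, its $y^{1-2\sig}$-harmonic extension to the half-space $y>0$, so that
\begin{equation*}
\D\cdot\left(y^{1-2\sig}\D_{(x,y)}v^{\ast}\right)=0\quad\text{in }\R^{n+1}_{+},\qquad v^{\ast}(x,0,t)=v(x,t),
\end{equation*}
and $-\lim_{y\ra 0^{+}}y^{1-2\sig}\1_y v^{\ast}=C_{n,\sig}(-\La)^{\sig}v=-C_{n,\sig}\left(v^{1/m}\right)_t$ on $\{y=0\}$ (\cite{CS}). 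By the Poisson representation $v^{\ast}$ inherits the $L^{\infty}$ bound and, more importantly, the oscillation bounds of $v$, so it suffices to prove a Hölder estimate for $v^{\ast}$ on the slice $\{y=0\}$.

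The first technical ingredient is a local energy (Caccioppoli) inequality for $v^{\ast}$. Multiplying the extended equation by $(v^{\ast}-k)_{\pm}\vp^2$ for a cutoff $\vp$ supported in a half-cylinder around $(x_0,0,t_0)$ and integrating over $\R^{n+1}_{+}$, the bulk integration by parts produces the good term $\int\int y^{1-2\sig}|\D(v^{\ast}-k)_{\pm}|^2\vp^2$ together with error terms absorbable by Young's inequality, while the Neumann datum contributes the boundary term $\int\int\left(v^{1/m}\right)_t(v-k)_{\pm}\vp^2\,dx\,dt$ on $\{y=0\}$. Writing $\psi(s)=s^{1/m}$ and $H_k^{\pm}(v)=\int_k^v\psi'(\tau)(\tau-k)_{\pm}\,d\tau\ge0$, integration by parts in $t$ turns this into $\frac{d}{dt}\int\int H_k^{\pm}(v)\vp^2$ up to a term carrying $\1_t(\vp^2)$; since $v$ is bounded, $H_k^{\pm}(v)$ is comparable to $(v-k)_{\pm}^2$, with constants depending on $m$ and on the distance of $k$ from $0$ and from $\sup v$. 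One also estimates the contribution of the truncation at points outside $Q_r$ (the nonlocal ``tail''), which is controlled by the assumed global boundedness of $v$ on $\R^n\times[t_0-r^{2\sig},t_0]$. The outcome is a Caccioppoli inequality bounding $\sup_t\int(v-k)_{\pm}^2\vp^2$ on $\{y=0\}$ plus $\int\int y^{1-2\sig}|\D(v^{\ast}-k)_{\pm}|^2\vp^2$ in the bulk by $\int\int(v-k)_{\pm}^2\left(|\D\vp|^2+|\1_t\vp^2|\right)$ together with tail terms.

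With this and the weighted Sobolev and trace embeddings for the measure $y^{1-2\sig}\,dx\,dy$, the iteration has two parts. Part one is the De Giorgi ``measure-to-pointwise'' lemma: if the rescaled $(v-k)_{\pm}$ occupies a sufficiently small fraction of a cylinder, it vanishes identically on a smaller cylinder; this is the usual geometric-decay argument fed by the Caccioppoli estimate, carried out for $v^{\ast}$ on half-cylinders and read off on $\{y=0\}$ (this also re-proves the local $L^\infty$ bound of Theorem \ref{thm-L-infty}). Part two is the isoperimetric (``intermediate value'') lemma: $v$ cannot be close to both its supremum and its infimum on a large portion of a cylinder, obtained by extending De Giorgi's isoperimetric inequality to the weighted half-space and combining it with the energy inequality on time slices. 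Balancing the two alternatives gives $\osc_{Q_{r/2}}v\le(1-\eta)\osc_{Q_r}v$ with $\eta\in(0,1)$ depending only on the data; iterating over the dyadic cylinders $Q_{r/2^j}(x_0,t_0)$ yields the Hölder modulus on $Q_{\gam r}(x_0,t_0)$ with $\beta=\log_2\tfrac{1}{1-\eta}$, after the standard adjustment of $\gam$.

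The main obstacle is the boundary term $\left(v^{1/m}\right)_t(v-k)_{\pm}$, equivalently the degeneracy of the coefficient $\psi'(v)=\tfrac1m v^{1/m-1}$, which vanishes where $v$ is small since $1/m-1>0$: the comparability $H_k^{\pm}(v)\simeq(v-k)_{\pm}^2$ deteriorates as $k\to0$, so the oscillation reduction near $\inf v$ is the delicate case and requires either keeping the truncation levels away from $0$ after an intrinsic rescaling in time, or splitting into the alternatives $\inf v$ ``large'' versus ``small'' and using the first De Giorgi lemma to propagate positivity. Tracking the nonlocal tail and the weight $y^{1-2\sig}$ through the Sobolev and isoperimetric inequalities is the remaining technical burden, but these steps are by now standard after \cite{CS} and \cite{CV}.
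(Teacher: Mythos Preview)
Your overall architecture---Caffarelli--Silvestre extension, Caccioppoli inequality with the boundary term $\int H_k^{\pm}(v)$, and a two-alternative De~Giorgi scheme---is exactly the paper's route. But two concrete pieces are missing, and without them the argument does not close.

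First, the oscillation-decay statement you write, $\osc_{Q_{r/2}}v\le(1-\eta)\osc_{Q_r}v$ on the \emph{standard} cylinders $Q_r=B_r\times(t_0-r^{2\sigma},t_0)$, is what one gets for uniformly parabolic problems; here it is false in general. Because the coefficient in front of $\partial_t$ is $\tfrac{1}{m}v^{1/m-1}$, the natural time scale depends on the size of $v$, and the paper is forced to work on \emph{intrinsic} cylinders $Q_R(\theta_0)=B_R\times(-\theta_0^{-\alpha}R^{2\sigma},0)$ with $\theta_0=\omega/A$ and $\alpha=1-1/m<0$, so that the time length is adapted to the current oscillation $\omega$. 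All the De~Giorgi lemmas, the level sets $Z_k$, and the final iteration are phrased on these cylinders; the passage from the intrinsic oscillation decay to a H\"older modulus on ordinary cylinders is a separate (standard) step at the very end. Your proposal mentions ``intrinsic rescaling in time'' only as an afterthought for the degenerate levels, whereas it must be built into the geometry from the start.

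Second, your ``Part two'' is where the real work lies, and the plan you give is not enough. When the first alternative fails one only knows that, at \emph{some} time level $t^{\ast}$, the set where $w$ is near its infimum has measure at most $(1-\rho/(1-\rho/2))|B_R|$; the hard step is to propagate this information forward to \emph{all} later times in the intrinsic cylinder. The De~Giorgi isoperimetric inequality alone cannot do this in the degenerate setting. The paper (following Chen--DiBenedetto) introduces the logarithmic test function
\[
\Psi(H,(w^{\ast}-k)_-,c)=\Big(\ln\frac{H}{H-(w^{\ast}-k)_-+c}\Big)_+
\]
and tests the extended equation against $(M-w^{\ast})^{\alpha}\partial_{w^{\ast}}[\Psi^2]\,\zeta^2$; this produces a differential inequality for $\int_{B_R}\Psi^2\zeta^2$ in $t$ that yields the time-propagation (their Lemma~4.3). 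Only after that does the isoperimetric lemma (their Lemma~4.5) shrink the measure of the bad set to an arbitrarily small fraction, so that the first De~Giorgi lemma can be re-applied. Your phrase ``combining the isoperimetric inequality with the energy inequality on time slices'' skips precisely this logarithmic estimate, and your alternative suggestion of ``using the first De~Giorgi lemma to propagate positivity'' is circular: that lemma requires as input the very smallness of measure you are trying to produce.

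A smaller point: the paper does not work with $v^{\ast}$ directly but with $w^{\ast}=M-v^{\ast}$, which moves the singularity of $H_k^{\pm}$ to $w=M$ (equivalently $v=0$) and makes the bookkeeping in the energy and logarithmic inequalities cleaner; your $H_k^{\pm}(v)\simeq(v-k)_{\pm}^2$ with constants blowing up as $k\to 0$ is exactly the symptom this change of variables is designed to manage.
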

In order to develop the H\"older regularity method, it is necessary
to localize the energy inequality by space and time truncation. Due
to the non-locality of the diffusion, this appears complicated. On
the other hand, $(-\La)^{\sigma}v$ can be thought as the normal
derivative of some extension of $v$ (the Dirichlet to Neumann
operator of $v$. See \cite{CS} for a general discussion). This
allows us to realize the truncation as a standard local equation in
one more dimension: we introduce first the corresponding extension
$v^{\ast}$ defined from $C_0^{\infty}(\R^n)$ to
$C_0^{\infty}(\R^n\times\R^+)$ by:
\begin{equation*}
\begin{aligned}
&-\D (y^a\D v^*)=0 \qquad \qquad \mbox{in $\R^n\times(0,\infty)$}\\
&\quad v^{\ast}(x,0)=v(x) \qquad \qquad \mbox{for $x\in\R^n$}
\end{aligned}
\end{equation*}
for $a=1-2\sigma$. (This extension consists simply in convolving $v$
with the Poisson kernel of the upper half space in one more
variable.) Then the following result holds true: for $v$ defined on
$\R^n$, we have:
\begin{equation*}
(-\La)^{\sigma}v(x)=\partial_{\nu}v^{\ast}(x,0)=-\lim_{y\to0}y^av^{\ast}_y(x,y)
\end{equation*}
where we denote $\partial_{\nu}v^{\ast}$ the outward normal
derivative of $v^{\ast}$ on the boundary $\{y=0\}$. Hence, it is
possible to consider the solution $v$ of problem
\eqref{eq-nlocal-PME-1} as the boundary value of $v^{\ast}$ which is
solution of
\begin{equation}\label{eq-Holder}
\begin{cases}
\begin{aligned}
\D (y^a\D v^*)&=0 \qquad \qquad \qquad \qquad \text{in $y>0$}\\
\lim_{y\to0}y^av^{\ast}_y(x,y,t)=&(v^{\frac{1}{m}})_t(x,0,t) \qquad
\qquad x\in\Omega\\
v^{\ast}(x,0,t)&=0 \qquad \qquad \qquad \qquad \mbox{on
$\R^n\bs\Omega$}.
\end{aligned}
\end{cases}
\end{equation}
Thus, we can obtain the H\"older estimate of $v$ immediately by
showing the H\"older regularity of $v^{\ast}$.\\
\indent Since the diffusion coefficients $D(v)=|v|^{1-\frac{1}{m}}$
goes to infinity as $v\to 0$, we need to control the oscillation of
$v$ from below. Hence, we consider the new function $w^{\ast}$
derived from $v^{\ast}$ such that
$w^{\ast}(x,y,t)=M-v^{\ast}(x,y,t+t_0)$ with $M=M(t_0)=\sup_{t\geq
t_0>0} v^{\ast}$. By Theorem \ref{thm-L-infty}, we know that the
solution satisfies
\begin{equation*}
v^{\ast}(\cdot,t)\leq M(t_0)<\infty \qquad \quad (t\geq t_0).
\end{equation*}
From this, we get to a familiar situation:
\begin{equation}\label{eq-Hoilder(M-v)}
\begin{cases}
\begin{aligned}
\nabla(y^a\nabla w^{\ast})&=0 \qquad \qquad \qquad \qquad \qquad \quad \mbox{in $y>0$}\\
-\lim_{y\to
0^+}y^a\nabla_yw^{\ast}(x,y)=&\left[(M-w^{\ast})^{\frac{1}{m}}\right]_t(x,0)
\qquad \qquad x\in\Omega\\
w^{\ast}(x,0,t)&=M \qquad \qquad \qquad \qquad \qquad \mbox{on
$\R^n\bs\Omega$}.
\end{aligned}
\end{cases}
\end{equation}
\\
\indent The paper is divided into four parts: In Section 2 we study
several properties of the Fast Diffusion Equation (shortly, FDE)
with fractional powers
\begin{equation}\label{eq-frac-FDE-1}
(-\La)^{\sigma}u^m+u_t=0, \qquad
\left(\frac{n-2\sigma}{n+2\sigma}<m<1\right).
\end{equation}
More precisely, we explain \emph{Scale Invariance},
\emph{$L^1$-Contraction} and \emph{Extinction on Finite Time}. In
Section 3, we show the existence of weak solution of the problem
\eqref{eq-nlocal-PME-1}. Also, we investigate the boundedness of the
solutions of problem \eqref{eq-nlocal-PME-1} for positive times.
Lastly in this section, we compute local energy inequality of
$(w^{\ast}-k)_\pm$ which will be a key step in establishing local
H\"older estimates. The proof of the H\"older regularity of problems
is given in Section 4. In this section, we consider the extension
$v^{\ast}$ of $v$ that solves \eqref{eq-nlocal-PME-1}. This allows
us to treat non-linear problems, involving fractional Laplacians, as
a local problems. In the last section, we study the existence of
non-linear eigenvalue problem with fractional powers which is
asymptotic profile of the parabolic flow \eqref{eq-nlocal-PME-1} on
extinction time.
\\
\\
\textbf{Notations:} Before we explain the main ideas of the paper,
let us summarize the notations and definitions that will be used.
\begin{itemize}
\item Numbers: $\frac{n-2\sigma}{n+2\sigma}<m<1$, $\alpha=1-\frac{1}{m}$, $0<\sigma<1$, $a=1-2\sigma$ and $M=\sup_{t\geq t_0} v^{\ast}$ for some $t_0>0$.
\item For $x_0=(x_0^1,\cdots,x_0^n)\in\R^n$, we denote by
$B_r(x_0)=[x_0^i-r,x_0^i+r]^n$ a cube in the $x$ variable only, and
$B^{\ast}_r(x_0)=B_r(x_0)\times(0,r)\in \R^n\times(0,\infty)$ a cube
in the $x$, $y$ variables sitting on the plane $y=0$.
\item We construct the cylinders
\begin{equation*}
Q_r(x_0,t_0)=B_r(x_0)\times(t_0-r^{2\sigma},t_0)
\end{equation*}
and, for $\alpha=1-\frac{1}{m}$,
\begin{equation*}
\begin{aligned}
Q_r(\omega)(x_0,t_0)=B_r(x_0)\times\left(t_0-\frac{r^{2\sigma}}{\omega^{\alpha}},t_0\right).
\end{aligned}
\end{equation*}
In addition, for $\rho_1$ and $\rho_2$,
\begin{equation*}
Q_r(\omega,\rho_1,\rho_2)(x_0,t_0)= B_{r-\rho_1 r}(x_0)\times
\left(t_0-(1-\rho_2)\frac{r^{2\sigma}}{\omega^{\alpha}},t_0\right).
\end{equation*}
\item The cylinders $Q^{\ast}_r(x_0,t_0)$, $Q^{\ast}_r(\omega)(x_0,t_0)$ and
$Q^{\ast}_r(\omega,\rho_1,\rho_2)(x_0,t_0)$ are obtained in a
similar manner by replacing $B_r(x_0)$ and $B_{r-\rho_1 r}(x_0)$ by
$B^{\ast}_r(x_0)$ and $B^{\ast}_{r-\rho_1 r}(x_0)$ respectively.
\item We let $Q_r$, $Q^{\ast}_r$, $Q_r(\omega)$, $Q^{\ast}_r(\omega)$,
$Q_r(\omega,\rho_1,\rho_2)$ and $Q^*_r(\omega,\rho_1,\rho_2)$ be the
cylinders around the point $(x_0,t_0)=(0,0)$.
\end{itemize}
\indent Let us start with showing the properties of the solution for
FDE with fractional powers in the following section.
\section{Properties of Fast Diffusion Equations with Fractional Powers}

\setcounter{equation}{0}
\setcounter{thm}{0}

Since the operator $(-\La)^{\sigma}$ converges to $(-\La)$ as the
quantity $\sigma$ goes to $1$, it is natural to expect that the
solutions of the equation \eqref{eq-frac-FDE-1} has a lot in common
with those of the FDE (of course, not in complete accord). Hence,
before coming to main issue, we will discuss such properties of FDE
in this section.
\subsection{Scale Invariance} Let us examine the application of
\emph{scaling transformations} to the fractional powers of the FDE
in some detail. Let $u=u(x,t)$ be a solution of the fractional
powers of the FDE,
\begin{equation}\label{eq-frac-PME}
(-\La)^{\sigma} u^m+u_t=0 \qquad \qquad (0<m<1).
\end{equation}
We apply the group of dilations in all the variables
\begin{equation*}
u'=Ku, \qquad x'=Lx \qquad t'=Tt,
\end{equation*}
and impose the condition that $u'$ so expressed as a function of
$x'$ and $t'$, i.e.,
\begin{equation*}
u'(x',t')=Ku(\frac{x'}{L},\frac{t'}{T}),
\end{equation*}
has to be again a solution of (\ref{eq-frac-PME}). Then:
\begin{equation}\label{sol-invari}
\frac{\partial u'}{\partial t'}=\frac{K}{T}\frac{\partial
u}{\partial t}\left(\frac{x'}{L},\frac{t'}{T}\right)
\end{equation}
and
\begin{equation*}
\begin{aligned}
(-\La)^{\sigma}
[u'(x',t')]^m&=\int\frac{[u'(x',t')]^m-[u'(y',t')]^m}{|x'-y'|^{n+2\sigma}}dy'\\
&=\frac{K^m}{L^{2\sigma}}\int
\frac{\big[u\left(\frac{x'}{L},\frac{t'}{T}\right)\big]^m-\big[u\left(\frac{y'}{L},\frac{t'}{T}\right)\big]^m}{|\frac{x'}{L}-\frac{y'}{L}|^{n+2\sigma}}d\left(\frac{y'}{L}\right)\\
&=\frac{K^m}{L^{2\sigma}}(-\La)^{\sigma} u^m.
\end{aligned}
\end{equation*}
Hence, (\ref{sol-invari}) will be a solution if and only if
$KT^{-1}=K^{m}L^{-2\sigma}$, i.e.,
\begin{equation*}
K^{m-1}=L^{2\sigma}T^{-1}.
\end{equation*}
We thus obtain a two-parametric transformation group acting on the
set of solution of (\ref{eq-frac-PME}). Assuming that $m \neq 1$, we
may choose as free parameters $L$ and $T$, so that it can be written
as
\begin{equation*}
u'(x',t')=L^{\frac{2\sigma}{m-1}}T^{-\frac{1}{m-1}}u\left(\frac{x'}{L},\frac{t'}{T}\right)
\end{equation*}
Using standard letters for the independent variables as putting
$u'=\tau u$, we get:
\begin{equation}\label{sol-tau-invari}
(\tau
u)(x,t)=L^{\frac{2\sigma}{m-1}}T^{-\frac{1}{m-1}}u\left(\frac{x}{L},\frac{t}{T}\right).
\end{equation}
The conclusion is:
\begin{lemma}
If $u$ is a solution of the fractional powers of the FDE in a
certain class of solutions $\mathbb{S}$ that is closed under
dilations in $x,t$ and $u$, then $\tau u$ given by
(\ref{sol-tau-invari}) is again a solution of the fractional powers
of the FDE in the same class.
\end{lemma}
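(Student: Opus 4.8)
The plan is to verify directly that $\tau u$ solves \eqref{eq-frac-PME} and then to invoke the closure hypothesis on $\mathbb{S}$ to place it in the required class. Write $K=L^{\frac{2\sigma}{m-1}}T^{-\frac{1}{m-1}}$, so that $(\tau u)(x,t)=K\,u(x/L,t/T)$. First I would record the two elementary scaling identities already assembled in the discussion preceding the statement: the chain rule gives $\partial_t(\tau u)(x,t)=\frac{K}{T}\,u_t(x/L,t/T)$, while the change of variables $y\mapsto y/L$ in the singular integral defining $(-\La)^{\sigma}$ gives $(-\La)^{\sigma}(\tau u)^m(x,t)=\frac{K^m}{L^{2\sigma}}(-\La)^{\sigma}u^m(x/L,t/T)$; here the normalization constant $C_{n,\sigma}$ cancels and only the homogeneity of the kernel $|x-y|^{-(n+2\sigma)}$ together with the Jacobian factor $L^{-n}$ enters, producing the net factor $L^{-2\sigma}$.

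Adding the two identities, $(-\La)^{\sigma}(\tau u)^m+\partial_t(\tau u)$ evaluated at $(x,t)$ equals $\frac{K^m}{L^{2\sigma}}(-\La)^{\sigma}u^m(x/L,t/T)+\frac{K}{T}u_t(x/L,t/T)$. The key algebraic check is that the two prefactors coincide, i.e.\ $\frac{K^m}{L^{2\sigma}}=\frac{K}{T}$, equivalently $K^{m-1}=L^{2\sigma}T^{-1}$; but this is precisely the defining relation satisfied by our choice of $K$, since $\big(L^{\frac{2\sigma}{m-1}}T^{-\frac{1}{m-1}}\big)^{m-1}=L^{2\sigma}T^{-1}$ (this is exactly what fixed the exponents $\tfrac{2\sigma}{m-1}$ and $-\tfrac{1}{m-1}$ in \eqref{sol-tau-invari}). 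Consequently the right-hand side equals $\frac{K}{T}\big[(-\La)^{\sigma}u^m+u_t\big](x/L,t/T)$, which vanishes because $u$ is assumed to solve \eqref{eq-frac-PME}. Hence $\tau u$ is again a solution of the fractional powers of the FDE.

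Finally, $\tau u$ is obtained from $u$ by composing a dilation $x\mapsto x/L$ in space, a dilation $t\mapsto t/T$ in time, and a dilation $u\mapsto Ku$ in the dependent variable; since by hypothesis $\mathbb{S}$ is closed under exactly these operations, $\tau u\in\mathbb{S}$, and the lemma follows. I do not anticipate any serious analytic obstacle: the only point that needs a little care is justifying the change of variables in the singular integral defining $(-\La)^{\sigma}$ (so that the formal manipulation above is legitimate for functions in the class under consideration) and the bookkeeping of the resulting power of $L$; the remaining content is the group-law computation already carried out in the text.
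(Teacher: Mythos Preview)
Your proposal is correct and is essentially identical to the paper's argument: the paper does not give a separate proof of this lemma but simply records it as the conclusion of the scaling computation in the preceding paragraphs, which is exactly the computation you reproduce (chain rule for $\partial_t$, change of variables in the singular integral for $(-\La)^\sigma$, and the algebraic check $K^{m-1}=L^{2\sigma}T^{-1}$). Your explicit invocation of the closure hypothesis on $\mathbb{S}$ is the only detail you spell out beyond the paper, and it is the obvious final step.
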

\subsection{$L^1$-contraction} This is a very important estimate
which has played a key role in the fractional powers of the FDE
theory. It will allow us to develop existence, uniqueness and
stability theory in the space $L^1$.
\begin{lemma}[$L^1$-contraction]\label{L^1-contraction}
Let $\Omega$ is a bounded domain of $\R^n$ with smooth boundary, and
let $u$ and $\tilde{u}$ be two smooth solutions of the fractional
powers of the fast diffusion equation (FDE):
\begin{equation*}
\begin{cases}
\begin{aligned}
&(-\La)^{\sigma}u^m+u_t=0 \qquad \qquad \mbox{in
$Q_{T}=\Omega\times(0,T)$}\\
&\qquad \quad u=0 \qquad \qquad \qquad   \qquad \mbox{on
$\R^n\bs\Omega$}
\end{aligned}
\end{cases}
\end{equation*}
with initial date $u_0$, $\tilde{u}_0$ respectively. We have for
every $t>\tau \geq 0$
\begin{equation}\label{L^1-con-1}
\int_{\Omega}\big[u(x,t)-\tilde{u}(x,t)\big]_+dx \leq
\int_{\Omega}\big[u(x,\tau)-\tilde{u}(x,\tau)\big]_+dx
\end{equation}
As a consequence,
\begin{equation}\label{L^1-con-2}
\|u(t)-\tilde{u}(t)\|_1\leq \|u_0-\tilde{u}_0\|_1.
\end{equation}
\end{lemma}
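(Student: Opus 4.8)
The plan is to prove the $L^1$-contraction by the classical Kato-type argument, using the fact that $(-\Lambda)^\sigma$ is a nonlocal operator that still respects the ``positivity at the maximum'' principle which makes Kato's inequality work. Write $u$ and $\tilde u$ for the two solutions, set $w = u - \tilde u$, and subtract the two equations to get
\begin{equation*}
w_t + (-\Lambda)^\sigma(u^m - \tilde u^m) = 0 \qquad \text{in } Q_T,
\end{equation*}
with $w = 0$ on $\R^n \bs \Omega$. The goal is to multiply by an approximation $p_\e(w)$ of $\operatorname{sign}_+(w) = \chi_{\{w>0\}}$, integrate over $\Omega$, and let $\e \to 0$.

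\textbf{Step 1 (set-up of the test function).} Fix a smooth, nondecreasing, convex-on-$\R_+$ approximation $p_\e : \R \to [0,1]$ with $p_\e(s) = 0$ for $s \le 0$, $p_\e(s) \nearrow 1$ for $s > 0$ as $\e \to 0$, and $p_\e'(s) \ge 0$. Multiply the equation for $w$ by $p_\e(w)$ and integrate over $\Omega$; since $w = 0$ off $\Omega$ we may equally integrate over $\R^n$. The time term becomes $\frac{d}{dt}\int_\Omega P_\e(w)\,dx$, where $P_\e' = p_\e$ and $P_\e \to (\cdot)_+$.

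\textbf{Step 2 (sign of the diffusion term --- the key point).} The heart of the matter is to show
\begin{equation*}
\int_{\R^n} (-\Lambda)^\sigma(u^m - \tilde u^m)(x)\, p_\e(w(x))\, dx \ge 0,
\end{equation*}
at least in the limit $\e \to 0$. Using the singular-integral representation of $(-\Lambda)^\sigma$ and symmetrizing in $x,y$, this integral equals
\begin{equation*}
\frac{C_{n,\sigma}}{2}\int_{\R^n}\int_{\R^n}\frac{\bigl[(u^m - \tilde u^m)(x) - (u^m - \tilde u^m)(y)\bigr]\bigl[p_\e(w(x)) - p_\e(w(y))\bigr]}{|x-y|^{n+2\sigma}}\,dx\,dy.
\end{equation*}
Now I use that $s \mapsto s^m$ is monotone increasing, so $(u^m - \tilde u^m)(x) - (u^m-\tilde u^m)(y)$ has, on the set where $p_\e(w(x)) \ne p_\e(w(y))$, the same sign as $w(x) - w(y)$ in the relevant regime; combined with $p_\e$ nondecreasing, the product of the two bracketed factors in the numerator is nonnegative pointwise. (One has to be a little careful: the clean statement is that $a \mapsto a^m$ being increasing gives $(u^m-\tilde u^m)(x)-(u^m-\tilde u^m)(y)$ and $p_\e(w(x))-p_\e(w(y))$ the same sign whenever the latter is nonzero, because $p_\e$ only ``sees'' the sign of $w$, and where $w(x)>0 \ge w(y)$ one checks $u(x)>\tilde u(x)$, $u(y)\le\tilde u(y)$ forces $u^m(x)-\tilde u^m(x) > 0 \ge u^m(y)-\tilde u^m(y)$ by monotonicity of $s^m$; the intermediate regime is handled by the convexity/monotonicity of $p_\e$ and a limiting argument.) This is the step I expect to be the main obstacle, and it is exactly where the nonlocality has to be confronted head-on rather than localized.

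\textbf{Step 3 (conclusion).} Having $\frac{d}{dt}\int_\Omega P_\e(w(x,t))\,dx \le 0$, integrate in time from $\tau$ to $t$ and let $\e \to 0$ (dominated convergence, using $0 \le P_\e(s) \le s_+$ and the assumed smoothness/integrability of the solutions) to obtain
\begin{equation*}
\int_\Omega \bigl[u(x,t)-\tilde u(x,t)\bigr]_+ dx \le \int_\Omega \bigl[u(x,\tau)-\tilde u(x,\tau)\bigr]_+ dx,
\end{equation*}
which is \eqref{L^1-con-1}. For \eqref{L^1-con-2}, apply \eqref{L^1-con-1} with $\tau = 0$, then repeat the whole argument with the roles of $u$ and $\tilde u$ exchanged to bound $\int_\Omega[\tilde u - u]_+$, and add: $\|u(t)-\tilde u(t)\|_1 = \int_\Omega[u-\tilde u]_+ + \int_\Omega[\tilde u - u]_+ \le \|u_0 - \tilde u_0\|_1$. (If one wants to avoid the approximation argument entirely, an alternative is to test directly with $\operatorname{sign}_+(u^m - \tilde u^m) = \operatorname{sign}_+(u - \tilde u)$ and invoke Kato's inequality $(-\Lambda)^\sigma \phi \cdot \operatorname{sign}_+(\phi) \ge (-\Lambda)^\sigma \phi_+$ in the distributional sense with $\phi = u^m - \tilde u^m$; this is the nonlocal analogue of the standard parabolic $L^1$-contraction proof, and the nonnegativity in Step 2 is precisely the integrated form of that inequality.)
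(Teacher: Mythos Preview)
Your overall strategy is the right one and coincides with the paper's, but Step~2 as written has a genuine gap. The claim that the integrand
\[
\bigl[(u^m-\tilde u^m)(x)-(u^m-\tilde u^m)(y)\bigr]\bigl[p_\e(w(x))-p_\e(w(y))\bigr]
\]
is nonnegative \emph{pointwise} for fixed $\e$ is false. Monotonicity of $s\mapsto s^m$ tells you that $u^m-\tilde u^m$ and $w=u-\tilde u$ have the same sign at each point, but it does \emph{not} say that the difference $(u^m-\tilde u^m)(x)-(u^m-\tilde u^m)(y)$ has the same sign as $w(x)-w(y)$. For instance with $m=1/2$ take $u(x)=100,\ \tilde u(x)=99$ and $u(y)=1,\ \tilde u(y)=1/2$: then $w(x)=1>w(y)=1/2>0$ so $p_\e(w(x))-p_\e(w(y))\ge 0$ (and can be strictly positive), yet $(u^m-\tilde u^m)(x)=10-\sqrt{99}\approx 0.05$ while $(u^m-\tilde u^m)(y)=1-\sqrt{1/2}\approx 0.29$, so the first bracket is negative. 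Your case analysis ``$w(x)>0\ge w(y)$'' only covers pairs straddling the zero level; the ``intermediate regime'' where both $w(x),w(y)\in(0,\e)$ is precisely where the sign can fail, and you do not control it.

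The paper avoids this issue by a one-line change that you yourself mention parenthetically at the end: apply the approximate sign function to $u^m-\tilde u^m$, not to $u-\tilde u$. Set $\phi=u^m-\tilde u^m$ (which also vanishes outside $\Omega$) and multiply the subtracted equation by $p(\phi)$. The symmetrized diffusion term is then
\[
\frac{1}{2}\int_{\R^n}\int_{\R^n}\frac{\bigl[\phi(y)-\phi(x)\bigr]\bigl[p(\phi(x))-p(\phi(y))\bigr]}{|x-y|^{n+2\sigma}}\,dx\,dy\le 0
\]
immediately, because $p$ is nondecreasing; no appeal to monotonicity of $s^m$ is needed here. That monotonicity enters only at the very last step, to identify $\operatorname{sign}_0^+(u^m-\tilde u^m)=\operatorname{sign}_0^+(u-\tilde u)$ when passing to the limit $p\to\operatorname{sign}_0^+$, so that $(u-\tilde u)_t\,p(\phi)\to \partial_t(u-\tilde u)_+$. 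With this correction your Steps~1 and~3 go through unchanged and the proof matches the paper's.
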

\begin{proof}
Let $p \in C^1(\R)$ be such that $0\leq p \leq 1, p(s)=0$ for $s
\leq 0,\,\, p'(s)>0$ for $s>0$. Let $w=u^m-\tilde{u}^m$ which
vanishes on the boundary $\R^n\bs\Omega\times[0,T)$. Subtracting the
equations satisfied by $u$ and $\tilde{u}$, multiplying by $p(w)$
and integrating in $\R^n$, we have for
\begin{equation*}
\begin{aligned}
\int_{\R^n}(u-\tilde{u})_t p(w)dx&=\int_{\R^n}-(-\La)^{\sigma}wp(w)dx\\
&=\int_{\R^n}\int_{\R^n}\frac{\big[-w(x,t)+w(y,t)\big]p(w(x,t))}{|x-y|^{n+2\sigma}}dydx.
\end{aligned}
\end{equation*}
Since
\begin{equation*}
\begin{aligned}
A=&\int_{\R^n}\int_{\R^n}\frac{\big[-w(x,t)+w(y,t)\big]p(w(x,t))}{|x-y|^{n+2\sigma}}dydx\\
=&\int_{\R^n}\int_{\R^n}\frac{\big[-w(x,t)+w(y,t)\big]\big[p(w(x,t))-p(w(y,t))\big]}{|x-y|^{n+2\sigma}}dydx\\
&+\int_{\R^n}\int_{\R^n}\frac{\big[-w(x,t)+w(y,t)\big]p(w(y,t))}{|x-y|^{n+2\sigma}}dydx\\
=&\int_{\R^n}\int_{\R^n}\frac{\big[-w(x,t)+w(y,t)\big]\big[p(w(x,t))-p(w(y,t))\big]}{|x-y|^{n+2\sigma}}dydx-A,
\end{aligned}
\end{equation*}
we obtain
\begin{equation*}
\begin{aligned}
\int_{\R^n}(u&-\tilde{u})_t p(w)dx\\
&=\frac{1}{2}\int_{\R^n}\int_{\R^n}\frac{\big[-w(x,t)+w(y,t)\big]\big[p(w(x,t))-p(w(y,t))\big]}{|x-y|^{n+2\sigma}}dydx.
\end{aligned}
\end{equation*}
Note that the term in the right-hand side is non-positive. Therefore
letting $p$ converge to the sign function $\textrm{sign}_0^+$,
\cite{Va}, and observing that
\begin{equation*}
\frac{\partial}{\partial t}(u-\tilde{u})_+=(u-\tilde{u})_t
\textrm{sign}_0^+,
\end{equation*}
we get
\begin{equation*}
\frac{d}{dt}\int_{\Omega}(u-\tilde{u})_+dx=\frac{d}{dt}\int_{\R^n}(u-\tilde{u})_+dx
\leq 0,
\end{equation*}
which implies (\ref{L^1-con-1}) for $u, \tilde{u}$. To obtain
(\ref{L^1-con-2}), combine (\ref{L^1-con-1}) applied first to $u$
and $\tilde{u}$ and then to $\tilde{u}$ and $u$.
\end{proof}

\subsection{Extinction in Finite Time} The main difference with porous medium equation is the
finite time convergence of the solutions to the zero solution, which
replaces the infinite time stabilization that holds for $m\geq 1$.
This phenomenon is called \emph{extinction in finite time} and read
as follows.
\begin{lemma}\label{lem-fde-extinction}
If $u(x,t)$ is the $C^{2,1}$ solution of the fast diffusion equation
with fractional powers :
\begin{equation*}
\begin{cases}
\begin{aligned}
&(-\La)^{\sigma}u^m+u_t=0 \qquad \qquad \mbox{in
$Q_{\infty}=\Omega\times(0,\infty)$}\\
&\qquad \quad u=0 \qquad \qquad \qquad \qquad \qquad  \mbox{on $\R^n\bs\Omega$}\\
&\quad u(x,0)=u_0(x)\in C^0(\Omega)
\end{aligned}
\end{cases}
\end{equation*}
where $\Omega$ is a bounded domain of $\R^n$ with smooth boundary,
then there exists $T^{\ast}>0$ such that $u(\cdot,t)=0$ for all
$t\geq T^{\ast}$, i.e.,
\begin{equation*}
\lim_{t\to T^{\ast}}\|u(\cdot,t)\|_{\infty}=0
\end{equation*}
for some $T^{\ast}>0$. The solution can be continued past the
extinction time $T^{\ast}$ in a weak sense as $u \equiv 0$.
\end{lemma}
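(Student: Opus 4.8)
The plan is to derive a closed differential inequality for the $L^{m+1}$-norm of $u(\cdot,t)$ and to exploit that the fast-diffusion range $m<1$ forces the corresponding ODE to extinguish in finite time. Since the solution vanishes outside $\Omega$, the function $v:=u^m$ belongs, for each fixed $t>0$ (after a routine approximation that handles the low regularity of $u^m$ across its zero set), to $\dot H^{\sigma}_0(\R^n)$, and the Dirichlet-form identity
\begin{equation*}
\int_{\R^n} v\,(-\La)^{\sigma} v\,dx = \frac{C_{n,\sigma}}{2}\int_{\R^n}\int_{\R^n}\frac{|v(x)-v(y)|^2}{|x-y|^{n+2\sigma}}\,dx\,dy = \|v\|_{\dot H^{\sigma}}^2
\end{equation*}
holds. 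Multiplying $(-\La)^{\sigma}u^m+u_t=0$ by $u^m\ge 0$, integrating over $\R^n$, and using this identity gives, with $\phi(t):=\int_{\Omega} u(x,t)^{m+1}\,dx$,
\begin{equation*}
\tfrac{1}{m+1}\,\phi'(t) = -\|u^m(\cdot,t)\|_{\dot H^{\sigma}}^2 \le 0 ,
\end{equation*}
so $\phi$ is non-increasing; in particular $\phi(t)\le\phi(0)=\|u_0\|_{L^{m+1}(\Omega)}^{m+1}<\infty$, since $\Omega$ is bounded.

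Next I would bound the right-hand side from below in terms of $\phi$ itself. By the Sobolev embedding $\dot H^{\sigma}\subset L^{2n/(n-2\sigma)}$ recalled in the introduction,
\begin{equation*}
\|u^m\|_{\dot H^{\sigma}}^2 \ge C_S^{-2}\,\|u^m\|_{L^{2n/(n-2\sigma)}}^2 = C_S^{-2}\,\|u\|_{L^{q}(\Omega)}^{2m}, \qquad q:=\frac{2mn}{n-2\sigma}.
\end{equation*}
The algebraic condition $m>\frac{n-2\sigma}{n+2\sigma}$ is exactly equivalent to $q\ge m+1$, so Hölder's inequality on the bounded set $\Omega$ yields $\|u\|_{L^{m+1}(\Omega)}\le |\Omega|^{\frac1{m+1}-\frac1q}\|u\|_{L^q(\Omega)}$, hence $\|u\|_{L^q(\Omega)}^{2m}\ge C\,\phi(t)^{\frac{2m}{m+1}}$. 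Combining, we obtain the autonomous differential inequality
\begin{equation*}
\phi'(t) \le -C\,\phi(t)^{\gamma}, \qquad \gamma:=\frac{2m}{m+1}\in(0,1),
\end{equation*}
and $\gamma<1$ is precisely where the singular/fast-diffusion hypothesis $m<1$ is used. (If $n\le 2\sigma$, which can occur only for $n=1$, one replaces the Sobolev inequality by the corresponding embedding into $L^{\infty}$ and runs the same scheme.)

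Finally I would integrate this inequality: as long as $\phi(t)>0$ one has $\tfrac{d}{dt}\phi(t)^{1-\gamma}\le -C(1-\gamma)$, so $\phi(t)^{1-\gamma}\le \phi(0)^{1-\gamma}-C(1-\gamma)t$, whose right-hand side reaches $0$ at $T^{\ast}:=\phi(0)^{1-\gamma}/\big(C(1-\gamma)\big)$. Hence $\phi(t)=0$ for every $t\ge T^{\ast}$, and since $u\ge 0$ is continuous, $u(\cdot,t)\equiv 0$ on $\Omega$ for $t\ge T^{\ast}$; the relation $\lim_{t\to T^{\ast}}\|u(\cdot,t)\|_{\infty}=0$ follows from continuity of $t\mapsto\|u(\cdot,t)\|_{\infty}$, and setting $u\equiv 0$ for $t>T^{\ast}$ evidently gives a weak solution past the extinction time. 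I expect the main obstacle to be the rigorous justification of the energy step — differentiating $\phi$ under the integral sign and the Dirichlet-form computation — given that $u^m$ is only Hölder continuous across the (possibly moving) set $\{u=0\}$: this is handled by first working on an interval $[t_1,t_2]\subset(0,\infty)$, using $(u+\e)^m-\e^m$ as test function and letting $\e\to0$, and then letting $t_1\to 0^+$ with the continuity of $u$ up to $t=0$. An alternative, less self-contained route is to compare $u$ from above with the separated-variables supersolution $(T^{\ast}-t)_+^{1/(1-m)}F(x)$, where $F$ solves the nonlinear eigenvalue problem for $(-\La)^{\sigma}$ on $\Omega$ studied in the last section.
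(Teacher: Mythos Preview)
Your argument is correct: the energy identity, the Sobolev embedding combined with H\"older on the bounded domain (using precisely $m>\frac{n-2\sigma}{n+2\sigma}$ to get $q=\frac{2mn}{n-2\sigma}\ge m+1$), and the resulting sublinear ODE $\phi'\le -C\phi^{2m/(m+1)}$ force $\phi$ to hit zero in finite time. Since $u\in C^{2,1}$, the passage from $\phi(T^{\ast})=0$ to $\|u(\cdot,T^{\ast})\|_{\infty}=0$ is immediate.

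This is, however, not the route the paper takes for this lemma. The paper constructs an \emph{explicit barrier} $V(x,t)=X(x)T(t)$, with $X$ equal to a constant inside a large ball $B_R\supset\Omega$ and a power decay $|x|^{-(n-2\sigma)/m}$ outside, and $T(t)=C(T^{\ast}-t)_+^{1/(1-m)}$; a direct lower bound on $(-\La)^{\sigma}X^m$ over $\Omega$ shows $V$ is a supersolution, and the $L^1$-contraction/comparison then forces $u\le V$, hence extinction. Your energy/ODE method is exactly what the paper uses later, in Lemma~2.4, to \emph{estimate} the extinction time $T^{\ast}$ from below, so you have in effect merged the two statements. The trade-offs: the paper's barrier argument is elementary, avoids justifying the differentiation of $\phi$ and the $\dot H^{\sigma}$ identity, and actually goes through for every $0<m<1$ (the subcritical hypothesis is not used in that computation); your argument is more systematic, yields an explicit bound $T^{\ast}\le C(n,m,\sigma,\Omega)\,\|u_0\|_{L^{m+1}}^{1-m}$ for free, and does not require guessing a supersolution, but it genuinely needs $m>\frac{n-2\sigma}{n+2\sigma}$ for the H\"older step. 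Your closing ``alternative route'' is close in spirit to the paper's, though the paper uses the explicit profile $X$ above rather than the nonlinear eigenfunction.
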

\begin{proof}
It is enough to construct a super-solution $V$ with the property of
extinction. We will choose the function $V$ in the form
$V(x,t)=X(x)T(t)$. Let $R>0$ be such that $\Omega\subset B_R(0)$,
and let us define a function $X(x)$ by
\begin{equation*}
X(x)=\begin{cases}
       \frac{1}{R^{\frac{n-2\sigma}{m}}} \qquad \,\,|x|\leq R\\
       \frac{1}{|x|^{\frac{n-2\sigma}{m}}} \qquad |x|>R
     \end{cases}
\end{equation*}
and a function $T(t)$ by
\begin{equation*}
T(t)=\begin{cases}
       C(T^{\ast}-t)^{\frac{1}{1-m}} \qquad \,\,t\leq T^{\ast}\\
       \qquad 0 \qquad \qquad \qquad t>T^{\ast}
     \end{cases}
\end{equation*}
for a constant $C>0$ we can choose later. Since
\begin{equation*}
\begin{aligned}
(-\La)^{\sigma}X^m &\geq\int_{\R^n\bs
B_{2R}(0)}\frac{X^m(x)-X^m(y)}{|x-y|^{n+2\sigma}}\,dy\\
&\geq
\left(\frac{2}{3}\right)^{n+2\sigma}\left(1-\frac{1}{2^{n-2\sigma}}\right)\frac{1}{R^{n-2\sigma}}\int_{\R^n\bs
B_{2R}(0)}\frac{1}{|y|^{n+2\sigma}}\,dy\\
&\geq
\frac{1}{2\sigma}\left(\frac{2}{3}\right)^{n+2\sigma}\left(\frac{1}{2^{\sigma}}-\frac{1}{2^{n}}\right)\frac{1}{R^{n}}
\end{aligned}
\end{equation*}
for all $x\in \Omega$, we have
\begin{equation*}
\begin{aligned}
&(-\La)^{\sigma}V^m+V_t\\
&\quad \geq
C^m(T^{\ast}-t)^{\frac{m}{1-m}}\left[\frac{1}{2\sigma}\left(\frac{2}{3}\right)^{n+2\sigma}\left(\frac{1}{2^{\sigma}}-\frac{1}{2^{n}}\right)\frac{1}{R^{n}}-\frac{C^{1-m}}{(1-m)R^{\frac{n-2\sigma}{m}}}\right]\\
&\quad =0
\end{aligned}
\end{equation*}
if we choose
$C=\left[\left(\frac{1-m}{2\sigma}\right)\left(\frac{2}{3}\right)^{n+2\sigma}\left(\frac{1}{2^{\sigma}}-\frac{1}{2^{n}}\right)R^{\frac{(1-m)n-2\sigma}{m}}\right]^{\frac{1}{1-m}}$.
\end{proof}
Next we deal with the two Lemmas. The ideas are based on the proof
of Lemma 1 and 2 in \cite{BH} respectively.
\begin{lemma}[Estimates on Finite Extinction Time]\label{lem-Estimates-Finite-Extinction-Time}
When $\frac{n-2\sigma}{n+2\sigma}<m<1$, there exists a positive
constant $C$ such that the solution $v=u^m$ of
\eqref{eq-nlocal-PME-1} satisfies
\begin{equation*}
T^{\ast}-t \leq C
\left(\int_{\R^n}v^{\frac{m+1}{m}}(x,t)\,\,dx\right)^{\frac{1-m}{1+m}}.
\end{equation*}
\end{lemma}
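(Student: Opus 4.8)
The plan is to follow a Berryman--Holland type energy argument. Set
\[
X(t):=\int_{\R^n}v^{\frac{m+1}{m}}(x,t)\,dx=\int_{\R^n}u^{m+1}(x,t)\,dx,
\]
which, because of the zero exterior condition, is really an integral over $\Omega$. Using $u_t=-(-\La)^{\sigma}u^m=-(-\La)^{\sigma}v$ together with the symmetrisation identity already employed in the proof of Lemma~\ref{L^1-contraction}, I would first compute, for a smooth solution and $t\in(0,T^{\ast})$,
\[
X'(t)=(m+1)\int_{\R^n}u^m u_t\,dx=-(m+1)\int_{\R^n}v\,(-\La)^{\sigma}v\,dx=-(m+1)\,c_{n,\sigma}\,\|v(\cdot,t)\|_{\dot H^{\sigma}}^2,
\]
where $c_{n,\sigma}>0$ arises from writing $\int v(-\La)^{\sigma}v=\tfrac{C_{n,\sigma}}{2}\iint\frac{|v(x)-v(y)|^2}{|x-y|^{n+2\sigma}}\,dx\,dy$; near $t=T^{\ast}$ one passes to the limit or argues by approximation.

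The crucial step is to bound $X(t)$ from above by $\|v\|_{\dot H^{\sigma}}^2$. By the fractional Sobolev embedding quoted in the introduction, $\|v\|_{L^{2n/(n-2\sigma)}(\R^n)}\le C_S\|v\|_{\dot H^{\sigma}}$. Since $v(\cdot,t)$ is supported in the bounded set $\Omega$, and since the hypothesis $\frac{n-2\sigma}{n+2\sigma}<m<1$ is \emph{equivalent} to $\frac{m+1}{m}<\frac{2n}{n-2\sigma}$, Hölder's inequality on $\Omega$ gives
\[
X(t)^{\frac{m}{m+1}}=\|v(\cdot,t)\|_{L^{(m+1)/m}(\Omega)}\le|\Omega|^{\frac{m}{m+1}-\frac{n-2\sigma}{2n}}\,\|v(\cdot,t)\|_{L^{2n/(n-2\sigma)}(\Omega)}\le C\,\|v(\cdot,t)\|_{\dot H^{\sigma}}.
\]
Combining this with the identity for $X'$ produces the closed differential inequality
\[
X'(t)\le -c\,X(t)^{\frac{2m}{m+1}},\qquad c=c(n,\sigma,m,|\Omega|)>0,
\]
whose exponent $\gamma:=\frac{2m}{m+1}$ lies in $(0,1)$ precisely because $m<1$.

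Finally I would integrate. As long as $X>0$ we have $\frac{d}{dt}X^{1-\gamma}\le-(1-\gamma)c$, and by Lemma~\ref{lem-fde-extinction} there is a finite extinction time $T^{\ast}$ with $X(T^{\ast})=0$; integrating from $t$ to $T^{\ast}$ and using $1-\gamma=\frac{1-m}{1+m}$ yields
\[
X(t)^{\frac{1-m}{1+m}}\ge(1-\gamma)c\,(T^{\ast}-t),\qquad\text{i.e.}\qquad T^{\ast}-t\le C\Bigl(\int_{\R^n}v^{\frac{m+1}{m}}(x,t)\,dx\Bigr)^{\frac{1-m}{1+m}},
\]
which is the claim. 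The main obstacle is the middle paragraph: one must recognise that the range of $m$ is exactly what makes $\frac{m+1}{m}$ subcritical for the fractional Sobolev exponent, and that the localisation to the bounded domain $\Omega$ is what lets one downgrade the $L^{2n/(n-2\sigma)}$ estimate to the $L^{(m+1)/m}$ norm that defines $X$; secondarily, one needs enough regularity and positivity of $v$ on $(0,T^{\ast})$ to differentiate $X$ legitimately and to run the ODE comparison up to the extinction time.
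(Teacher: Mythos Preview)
Your argument is correct and follows essentially the same route as the paper: differentiate $X(t)=\int v^{(m+1)/m}$ to obtain $X'(t)=-(m+1)\|v\|_{\dot H^\sigma}^2$, use the fractional Sobolev embedding together with H\"older on the bounded domain $\Omega$ (valid precisely because $\tfrac{m+1}{m}<\tfrac{2n}{n-2\sigma}$ in the stated range of $m$) to get $\|v\|_{\dot H^\sigma}^2\ge c\,X^{2m/(m+1)}$, and integrate the resulting ODE inequality up to $T^\ast$. The paper's proof is the same computation, only slightly less explicit about the exponent check and the ODE integration.
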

\begin{proof}
Multiplying equation \eqref{eq-nlocal-PME-1} by $v$ and integrating
by parts, we obtain the inequality
\begin{equation}\label{eq-finite-extinction-1}
\frac{d}{dt}\left(\int_{\R^n}v^{\frac{m+1}{m}}\,\,dx\right)=-(m+1)\int_{\R^n}v(-\La)^{\sigma}v\,\,dx=-(m+1)\|v\|^2_{\dot{H}^{\sigma}}.
\end{equation}
By the compactness of imbedding, there are constants $C$ and $C'$
such that, for any $v\in \dot{H}_0^{\sigma}$,
\begin{equation}\label{eq-finite-extinction-2}
\|v\|^2_{\dot{H}^{\sigma}}\geq C\|v\|^2_{\frac{2n}{n-2\sigma}} \geq
C'\|v\|^2_{\frac{m+1}{m}}
\end{equation}
when $\frac{n-2\sigma}{n+2\sigma}<m<1$. The last inequality makes
use of H\"older inequality. Substituting
\eqref{eq-finite-extinction-2} into \eqref{eq-finite-extinction-1}
one obtains an inequality which can be integrated to yield
\begin{equation*}
\left(\int_{\R^n}v^{\frac{m+1}{m}}(x,t')\,\,dx\right)^{\frac{1-m}{1+m}}-\left(\int_{\R^n}v^{\frac{m+1}{m}}(x,t)\,\,dx\right)^{\frac{1-m}{1+m}}\leq
-C'(1-m)(t'-t)
\end{equation*}
for $T^{\ast}>t'\geq t$. Letting $t'\to T^{\ast}$ and multiplying by
$-1$, we have the statement of the Lemma.
\end{proof}
\begin{lemma}\label{lem-lemma-2}
When $\frac{n-2\sigma}{n+2\sigma}<m<1$, the solution $v$ of
\eqref{eq-nlocal-PME-1} satisfies
\begin{equation}\label{eq-lemma-2}
\int_{\R^n}v^{\frac{m+1}{m}}(x,t)\,dx\leq
\left(1-\frac{t}{T^{\ast}}\right)^{\frac{1+m}{1-m}}\int_{\R^n}v^{\frac{m+1}{m}}(x,0)\,dx.
\end{equation}
\end{lemma}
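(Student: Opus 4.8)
The plan is to combine the two ingredients already in hand: the differential inequality obtained in the proof of Lemma~\ref{lem-Estimates-Finite-Extinction-Time} and its companion estimate. Set $E(t)=\int_{\R^n}v^{\frac{m+1}{m}}(x,t)\,dx$. From \eqref{eq-finite-extinction-1} and \eqref{eq-finite-extinction-2} we have, as in the previous proof,
\begin{equation*}
E'(t)=-(m+1)\|v(\cdot,t)\|^2_{\dot H^{\sigma}}\leq -C'(m+1)\,E(t)^{\frac{2m}{m+1}},
\end{equation*}
where we used $\|v\|^2_{(m+1)/m}=E(t)^{2m/(m+1)}$. Writing $p=\tfrac{1-m}{1+m}\in(0,1)$, this says $\frac{d}{dt}E(t)^{p}\le -C'(1-m)$, hence $E(t)^{p}$ is bounded above by a line of negative slope and must reach $0$; the extinction time $T^{\ast}$ is characterised by $E(T^{\ast})=0$.

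The key step is to turn this one-sided bound into the \emph{sharp} decay rate \eqref{eq-lemma-2}, which is an equality-type profile rather than a crude integration from $0$. For this I would integrate the inequality $\frac{d}{dt}E^{p}\le -C'(1-m)$ from $t$ to $T^{\ast}$ to get $E(t)^{p}\ge C'(1-m)(T^{\ast}-t)$, and separately from $0$ to $t$. The point is that Lemma~\ref{lem-Estimates-Finite-Extinction-Time} already gives the reverse-type control $T^{\ast}-t\le C\,E(t)^{p}$, i.e. $E(t)^{p}\ge \tfrac1C (T^{\ast}-t)$, with the \emph{same} exponent $p$. Evaluating the Lemma at $t=0$ yields $T^{\ast}\le C\,E(0)^{p}$. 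Combining the lower bound on $E(t)^p$ in terms of $(T^\ast-t)$ with the upper bound on $T^\ast$ in terms of $E(0)^p$, and matching constants, gives
\begin{equation*}
E(t)\le \left(\frac{T^{\ast}-t}{T^{\ast}}\right)^{1/p}E(0)=\left(1-\frac{t}{T^{\ast}}\right)^{\frac{1+m}{1-m}}E(0),
\end{equation*}
which is exactly \eqref{eq-lemma-2}. The cleanest way to see this is to observe that both $f(t):=E(t)^{p}$ and $g(t):=\tfrac1C\,C'(1-m)(T^\ast-t)$ satisfy $f(T^\ast)=g(T^\ast)=0$ and $f'\le g'$ pointwise, forcing $f(t)\ge g(t)$ backward in time; feeding this back into $E'\le -C'(m+1)E^{2m/(m+1)}$ and integrating once more produces the stated power of $(1-t/T^\ast)$ once the constant $C'(1-m)$ is reconciled with $T^{\ast}$ via the value at $t=0$.

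The main obstacle is bookkeeping of constants: one must check that the constant $C$ appearing in Lemma~\ref{lem-Estimates-Finite-Extinction-Time} and the constant $C'$ from the Sobolev/H\"older inequality \eqref{eq-finite-extinction-2} are \emph{the same} up to the explicit factors $(1\pm m)$, so that the inequality $T^{\ast}-t\le C\,E(t)^{p}$ is actually an equality in the limiting ODE $E'=-C'(m+1)E^{2m/(m+1)}$; in other words, the extremal solution of the differential inequality is the one that realises the bound of the preceding lemma. Once that identification is made — essentially $C=\bigl(C'(1-m)\bigr)^{-1}$ — the profile \eqref{eq-lemma-2} follows by separating variables in the extremal ODE and using monotonicity of $E$. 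I would present this by first deriving $\frac{d}{dt}E^{p}= -C'(1-m)\cdot\frac{\|v\|^2_{\dot H^\sigma}}{\|v\|^2_{(m+1)/m}}\le -C'(1-m)$, noting the inequality is saturated in the worst case, and then integrating the resulting autonomous equation from $t$ to $T^{\ast}$.
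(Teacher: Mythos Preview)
Your argument has a genuine gap: the Sobolev-based differential inequality cannot by itself produce the sharp profile \eqref{eq-lemma-2}. Writing $Z(t)=E(t)^{p}$ with $p=\tfrac{1-m}{1+m}$, the inequality $E'\le -C'(m+1)E^{2m/(m+1)}$ is exactly $Z'(t)\le -c$ with $c=C'(1-m)$. Integrating from $0$ to $t$ gives $Z(t)\le Z(0)-ct$; integrating from $t$ to $T^{\ast}$ (using $Z(T^{\ast})=0$) gives $Z(t)\ge c(T^{\ast}-t)$, hence $Z(0)\ge cT^{\ast}$. The target inequality $Z(t)\le (1-t/T^{\ast})Z(0)$ is equivalent to $Z(0)-ct\le (1-t/T^{\ast})Z(0)$ together with $Z(t)\le Z(0)-ct$, i.e.\ to $Z(0)\le cT^{\ast}$ --- the \emph{opposite} of what you have. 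Equality $Z(0)=cT^{\ast}$ holds only when the Sobolev inequality is saturated for all times, i.e.\ for the separable solution; for a generic $v$ the constants do \emph{not} match, and no amount of bookkeeping will close the gap. Your remark that ``the inequality is saturated in the worst case'' is precisely the unproved step.

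The paper's proof (following Berryman--Holland) uses a different mechanism: it shows that $Z$ is \emph{convex}. One introduces the second energy $\int_{\R^n} v\,(-\La)^{\sigma}v\,dx$, computes its time derivative
\[
\frac{d}{dt}\int_{\R^n} v\,(-\La)^{\sigma}v\,dx=-2m\int_{\Omega}\frac{[(-\La)^{\sigma}v]^{2}}{v^{(1-m)/m}}\,dx,
\]
and combines this with \eqref{eq-finite-extinction-1} and the Cauchy--Schwarz inequality
\[
\Bigl(\int v\,(-\La)^{\sigma}v\Bigr)^{2}\le \int v^{\frac{m+1}{m}}\cdot\int \frac{[(-\La)^{\sigma}v]^{2}}{v^{(1-m)/m}}
\]
to obtain $Z'(s)\le Z'(t)$ for $s\le t$. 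Convexity of $Z$ together with $Z(T^{\ast})=0$ then gives $Z(t)\le (1-t/T^{\ast})Z(0)$ directly (the chord from $(0,Z(0))$ to $(T^{\ast},0)$ dominates the graph). This convexity argument is what replaces your attempted constant-matching, and it is the missing idea in your proposal.
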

\begin{proof}
Since $v(x,t)=0$ on $\R^n\bs\Omega$, applying the Cauchy-Schwarz
inequality, we obtain
\begin{equation*}
\left(\int_{\R^n}v(-\La)^{\sigma}v\,dx\right)^2=\left(\int_{\Omega}v(-\La)^{\sigma}v\,dx\right)^2\leq
\int_{\Omega}v^{\frac{m+1}{m}}\,dx\cdot\int_{\Omega}\frac{\left[(-\La)^{\sigma}v\right]^2}{v^{\frac{1-m}{m}}}\,dx.
\end{equation*}
It is convenient to rewrite this as
\begin{equation}\label{eq-basic-inequality}
\frac{\int_{\R^n}v(-\La)^{\sigma}v\,dx}{\int_{\Omega}v^{\frac{m+1}{m}}\,dx}\leq
\frac{\int_{\Omega}\frac{\left[(-\La)^{\sigma}v\right]^2}{v^{\frac{1-m}{m}}}\,dx}{\int_{\R^n}v(-\La)^{\sigma}v\,dx}.
\end{equation}
Next an expression for
$\frac{d}{dt}\int_{\R^n}v(x,t)(-\La)^{\sigma}v(x,t)\,dx$ will be
derived. Carrying out the indicated differentiation, one obtains
\begin{equation*}
\begin{aligned}
&\frac{d}{dt}\int_{\R^n}v(x,t)(-\La)^{\sigma}v(x,t)\,dx=2\int_{\R^n}v_t(x,t)(-\La)^{\sigma}v(x,t)\,dx\\
&\qquad \qquad
=2\int_{\Omega}v_t(x,t)(-\La)^{\sigma}v(x,t)\,dx=-2m\int_{\Omega}\frac{\left[(-\La)^{\sigma}v(x,t)\right]^2}{v^{\frac{1-m}{m}}(x,t)}\,dx.
\end{aligned}
\end{equation*}
The second equality follows from the fact that $v_t(x,t)=0$ on
$\R^n\bs\Omega$. Using this expression together with
\eqref{eq-finite-extinction-1}, we see that the basic inequality
\eqref{eq-basic-inequality} can be rewritten as
\begin{equation*}
-\frac{1}{(m+1)}\frac{\frac{d}{dt}\int_{\R^n}v^{\frac{m+1}{m}}(x,t)\,\,dx}{\int_{\R^n}v^{\frac{m+1}{m}}(x,t)\,dx}\leq
-\frac{\frac{d}{dt}\int_{\R^n}v(x,t)(-\La)^{\sigma}v(x,t)\,dx}{2m\int_{\R^n}v(x,t)(-\La)^{\sigma}v(x,t)\,dx}.
\end{equation*}
Integrating this differential inequality, we obtain
\begin{equation}\label{eq-integrating}
\left(\frac{\int_{\R^n}v^{\frac{m+1}{m}}(x,t)\,dx}{\int_{\R^n}v^{\frac{m+1}{m}}(x,s)\,dx}\right)^{\frac{2m}{m+1}}\geq
\frac{\int_{\R^n}v(x,t)(-\La)^{\sigma}v(x,t)\,dx}{\int_{\R^n}v(x,s)(-\La)^{\sigma}v(x,s)\,dx},
\end{equation}
for $T^{\ast}\geq t>s\geq 0$. Define
\begin{equation*}
Z(t)=\left(\int_{\R^n}v^{\frac{m+1}{m}}(x,t)\,dx\right)^{\frac{1-m}{1+m}}.
\end{equation*}
Then \eqref{eq-integrating} is just $Z'(s)\leq Z'(t)$ for all $s\leq
t$, so that $Z''\geq 0$ and $Z$ is convex. Hence
\begin{equation*}
Z(t)\geq Z(T)+\left(\frac{Z(T)-Z(s)}{T-s}\right)(T-t)
\end{equation*}
for all $s\leq t\leq T$. Setting $s=0$ and $T=T^{\ast}$, one obtains
\eqref{eq-lemma-2}.
\end{proof}

\section{Weak solutions and Local Energy Inequality}

\setcounter{equation}{0}
\setcounter{thm}{0}

First, we study the problem (\ref{eq-nlocal-PME-1}) in the class of
nonnegative weak solutions. For the remainder of this paper we
assume that
\begin{equation*}
\frac{n-2\sigma}{n+2\sigma}<m<1
\end{equation*}
holds.
\subsection{Weak solutions and Existence}
\begin{definition}
A non-negative weak solution of equation (\ref{eq-nlocal-PME-1}) is
a locally integrable function, $v \in
L^1_{loc}(\R^n\times[0,\infty))$, such that $(-\La)^{\sigma}v \in
L^1_{loc}(\R^n\times[0,\infty))$ and $v=0$ on $\R^n\bs\Omega$, and
the identity
\begin{equation}\label{eq-very-weak-solution}
\int_{0}^{\infty}\int_{\Omega}v^{\frac{1}{m}}\eta_t\,\,dxdt=\int_{0}^{\infty}\int_{\Omega}
v[-(-\La)^{\sigma}\eta] \,\,dxdt
\end{equation}
holds for any test function $\eta \in
C^{2,1}_c(\Omega\times[0,\infty))$ with $\eta=0$ on $\R^n\bs\Omega$.
\end{definition}
We show the existence and comparison result for weak solutions. the
proof is similar to the proof of the Theorem (5.5) in \cite{Va}.
\begin{lemma}
There exists a non-negative weak solution of
(\ref{eq-nlocal-PME-1}). Moreover, the comparison principle holds
for these solutions: $v$, $\hat{v}$ are weak solutions with initial
data such that $v_0 \leq \hat{v}_0$ a.e. in $\Omega$, then $v \leq
\hat{v}$ a.e. for all $t>0$.
\end{lemma}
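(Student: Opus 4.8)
\emph{The plan is} to obtain the weak solution as a limit of solutions of non-degenerate approximating problems, following the scheme of Theorem~5.5 in \cite{Va}. Writing $u=v^{1/m}$, equation \eqref{eq-nlocal-PME-1} becomes $u_t+(-\La)^{\sigma}(u^m)=0$ in $\Omega$ with $u=0$ on $\R^n\bs\Omega$ and $u(\cdot,0)=u_0:=v_0^{1/m}$. First I would replace the nonlinearity $s\mapsto s^m$ by a family $\Phi_\e\in C^{\infty}(\R)$ of strictly increasing functions with $0<c(\e)\le\Phi_\e'\le C(\e)<\infty$ and $\Phi_\e\to(\cdot)^m$ locally uniformly as $\e\to0$, and the datum $u_0$ by smooth approximations $u_{0,\e}$ with $\e\le u_{0,\e}\le\|u_0\|_{\infty}+1$ and $u_{0,\e}\to u_0$ in $L^1(\Omega)$. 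The approximate equation $\partial_t u_\e+(-\La)^{\sigma}\Phi_\e(u_\e)=0$ is uniformly parabolic of fractional order, and existence and uniqueness of a bounded solution follow from the theory of maximal monotone (accretive) operators in $L^1(\Omega)$ --- the map $u\mapsto(-\La)^{\sigma}\Phi_\e(u)$ under the exterior Dirichlet condition is $m$-accretive and its principal bilinear form is coercive on the energy space $\dot{H}^{\sigma}_0$ --- or, equivalently, from a Galerkin scheme in the eigenfunctions of $(-\La)^{\sigma}$ on $\Omega$. Comparison at the $\e$-level is exactly the computation of Lemma~\ref{L^1-contraction}: subtracting the equations for $u_\e$ and $\hat u_\e$, multiplying by $p(\Phi_\e(u_\e)-\Phi_\e(\hat u_\e))$ with $p$ a smooth approximation of $\textrm{sign}_0^+$, and using that the $(-\La)^{\sigma}$ bilinear form makes $\int\int\frac{[w(x)-w(y)][p(w(x))-p(w(y))]}{|x-y|^{n+2\sigma}}\,dxdy\ge0$, one obtains $\frac{d}{dt}\int_{\Omega}(u_\e-\hat u_\e)_+\,dx\le0$; hence $u_{0,\e}\le\hat u_{0,\e}$ forces $u_\e\le\hat u_\e$, which in particular yields uniqueness.

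The second step is to collect the $\e$-uniform a priori bounds needed to pass to the limit. Taking $\hat u_\e\equiv0$ in the $L^1$-contraction gives $\|u_\e(t)\|_{L^1(\Omega)}\le\|u_{0,\e}\|_{L^1(\Omega)}\le C$; multiplying the equation by $\Phi_\e(u_\e)$ and integrating in space and time gives the energy estimate, controlling $\int_0^T\|\Phi_\e(u_\e)\|^2_{\dot{H}^{\sigma}}\,dt$ and $\sup_{0<t<T}\int_{\Omega}\Psi_\e(u_\e(t))\,dx$ with $\Psi_\e'=\Phi_\e$; the smoothing estimate behind Theorem~\ref{thm-L-infty}, whose constants do not depend on $\e$, bounds $u_\e(t)$ in $L^{\infty}(\Omega)$ for every $t>0$; and the B\'enilan--Crandall argument --- comparing $u_\e(\cdot,t+h)$ with $u_\e(\cdot,t)$ through the $L^1$-contraction and using the homogeneity \eqref{sol-tau-invari} of the nonlinearity --- yields the one-sided bound $\partial_t u_\e\le u_\e/((1-m)t)$ which, together with the monotonicity of $t\mapsto\|u_\e(t)\|_{L^1}$, gives a uniform bound for $\partial_t u_\e$ in $L^1\big((\tau,T)\times\Omega\big)$ for every $\tau>0$. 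Combining the spatial $\dot{H}^{\sigma}$-regularity of $\Phi_\e(u_\e)$, this time-derivative bound, and the $L^{\infty}$ bound, a Fr\'echet--Kolmogorov/Aubin--Lions compactness argument produces a subsequence with $u_\e\to u$ a.e.\ and in $L^p_{loc}(\Omega\times(0,\infty))$; since $\Phi_\e\to(\cdot)^m$ uniformly on compacts, $\Phi_\e(u_\e)\to u^m$ weakly in $L^2_{loc}\big(0,\infty;\dot{H}^{\sigma}\big)$, which is enough to pass to the limit in the weak identity \eqref{eq-very-weak-solution}, while the $L^1$-contraction near $t=0$ shows that the datum $u_0$ is attained. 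Hence $v:=u^m$ is a non-negative weak solution.

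The comparison principle for weak solutions then follows by passing to the limit in the $\e$-level comparison: given $v_0\le\hat v_0$, choose the approximations so that $u_{0,\e}\le\hat u_{0,\e}$, whence $u_\e\le\hat u_\e$ for all $t>0$, and let $\e\to0$ to conclude $u\le\hat u$, i.e.\ $v\le\hat v$ a.e.; alternatively one re-runs the proof of Lemma~\ref{L^1-contraction} directly on the approximations before taking limits. \emph{The main obstacle} is the identification of the nonlinear limit: since the diffusivity $D(v)=|v|^{1-1/m}$ (equivalently $m\,u^{m-1}$) is singular on $\{u=0\}$, no uniform lower bound on $u_\e$ is available, so the convergence $\Phi_\e(u_\e)\to u^m$ must be extracted from $L^1$-type compactness together with the monotonicity of $s\mapsto s^m$ rather than from any uniform ellipticity, and one must verify that no mass is lost in the limit and that $(-\La)^{\sigma}(u_\e^m)$ --- which a priori is only an element of $L^1_{loc}$ --- converges in the sense of distributions tested against the admissible $\eta$. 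A subsidiary technical point is to set up the monotone-operator framework rigorously on the exterior-Dirichlet domain, namely to pin down the energy space ($\dot{H}^{\sigma}$ functions supported in $\overline{\Omega}$) in which the integrations by parts used above --- including the identity behind \eqref{eq-finite-extinction-1} --- are justified.
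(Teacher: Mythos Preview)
Your scheme is correct in outline and would produce a weak solution together with the comparison principle, but it is considerably heavier than what the paper actually does. The paper does \emph{not} regularize the nonlinearity $s\mapsto s^m$; instead it keeps the equation as it is and simply lifts the data by a constant: with $v_{0n}=v_0+1/n$ and exterior value $1/n$ on $\R^n\setminus\Omega$, the $L^1$-contraction (Lemma~\ref{L^1-contraction}) forces $1/n\le v_n\le N+1/n$, so each approximate problem is genuinely uniformly parabolic and has a smooth solution. The same $L^1$-contraction gives $v_{n+1}\le v_n$, so the limit $v=\lim v_n$ is a \emph{monotone} limit of bounded functions, and convergence in every $L^p$ is immediate without any Aubin--Lions, B\'enilan--Crandall, or energy-compactness argument; passing to the limit in \eqref{eq-very-weak-solution} is then trivial. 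Comparison is inherited directly from $v_{0n}\le\hat v_{0n}\Rightarrow v_n\le\hat v_n$.

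The practical difference is that the ``main obstacle'' you flag --- identifying the nonlinear limit when the diffusivity degenerates on $\{u=0\}$ --- simply does not arise in the paper's argument: monotonicity of the sequence replaces all the compactness machinery you invoke. Your approach buys more generality (it would work for nonlinearities without the nice homogeneity/monotone structure, and it produces the energy-class solution directly), at the cost of importing the smoothing estimate of Theorem~\ref{thm-L-infty} and a time-derivative bound that the paper never needs for existence. If you want to align with the paper, drop $\Phi_\e$ and the compactness step, and use the $1/n$-shift plus monotone convergence instead.
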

\begin{proof}
We first assume that $v_0$ is a non-negative smooth function with
compact support in $\Omega$. Then we begin by constructing a
sequence of approximate initial data $v_{0n}$ which does not take
the value zero, so as to avoid the degeneracy of the equation. We
may simple put
\begin{equation*}
v_{0n}(x)=v_0(x)+\frac{1}{n} \qquad \mbox{and} \qquad N=\sup(v_0).
\end{equation*}
and we solve the problem
\begin{equation}\label{eq-exist-weaksolution-assume-smooth}
\begin{cases}
\begin{aligned}
(-\La)^{\sigma}v_n&+\left(v_n^{\frac{1}{m}}\right)_t=0 \qquad
\mbox{in
\,$\Omega\times[0,\infty)$}\\
v_n(x,0)&=v_{0n}(x) \qquad \mbox{in \,$\Omega$}\\
v_n(x,t)&=1/n \qquad x\in\R^n\bs\Omega.
\end{aligned}
\end{cases}
\end{equation}
Then, the $L^1$-contraction implies that
\begin{equation*}
\frac{1}{n}\leq v_n(x,t)\leq N+\frac{1}{n}.
\end{equation*}
Therefore, we are dealing with a uniformly parabolic problem. Hence
the problem (\ref{eq-exist-weaksolution-assume-smooth}) has a unique
smooth solution $v_n$. Moreover, using the $L^1$-contraction again
\begin{equation*}
v_{n+1}(x,t) \leq v_n(x,t)
\end{equation*}
for all n $\geq 1$. Hence, we may define the function
\begin{equation*}
v(x,t)=\lim_{n\to\infty}v_n(x,t)
\end{equation*}
as a monotone limit of bounded non-negative functions. We see that
$v_n$ converges to $v$ in $L^p(\Omega\times[0,\infty))$ for every
$1\leq p<\infty$. In order to show that this $v$ is the weak
solution of problem (\ref{eq-nlocal-PME-1}), we multiply the
equation (\ref{eq-exist-weaksolution-assume-smooth}) by a test
function $\eta$ and integrate by part, the Theorem 1.2 in \cite{Gu},
in $\Omega\times[0,\infty)$ to obtain
\begin{equation*}
\int_{0}^{\infty}\int_{\Omega}\left(v_n^{\frac{1}{m}}\right)\eta_t\,\,dxdt=\int_{0}^{\infty}\int_{\Omega}
v_n[-(-\La)^{\sigma}\eta] \,\,dxdt.
\end{equation*}
Letting $n\to\infty$, we get the identity
(\ref{eq-very-weak-solution}) for $v$. Also $v_n=1/n$ on
$\R^n\bs\Omega$ and $0\leq v\leq v_n$, we have
\begin{equation*}
v(x,t)=0 \qquad \mbox{on \,$\R^n\bs\Omega$}.
\end{equation*}
Therefore, $v$ is a weak solution of
(\ref{eq-nlocal-PME-1}). \\
\indent If we have initial data $v_0$ and $\hat{v}_0$ such that
$v_0\leq \hat{v}_0$, then the above approximation process produces
ordered approximating sequences, $v_{0n}\leq\hat{v}_{0n}$. By the
$L^1$-contraction, we have $v_n\leq\hat{v}_n$ for every $n\geq 1$.
Letting $n\to\infty$, we have $v\leq\hat{v}$.\\
\indent We next assume that $v_0$ is bounded. In order to apply the
method used in above, we also construct a sequence of approximate
initial data $v_{0n}\in C^{2,1}(\Omega)$ with $v=1/n$ on
$\R^n\bs\Omega$. Then we can obtain the approximate solutions
$v_n\in C^{2,1}(\Omega)$ with $v=1/n$ on $\R^n\bs\Omega$. By the
$L^1$-contraction, $v_n\to v$ in $C([0,\infty):L^1(\Omega))$ as
$n\to\infty$ and $v=0$ on $\R^n\bs\Omega$. Since the functions are
bounded, convergence also takes place in $C([0,\infty):L^p(\Omega))$
for all $p \leq \infty$. Hence the proof ends as before. \\
\indent Finally, we assume that the initial date $v_0$ is
integrable. By considering the sequence of approximations of the
initial data
\begin{equation*}
v_{0k}(x)=\min\big\{v_0(x),k\},
\end{equation*}
we can complete the proof as before.
\end{proof}

\subsection{$L^{\infty}$ bounds}
This subsection is devoted to the proof of Theorem
\ref{thm-L-infty}. The simple proof is based on a recurrence
non-linear relation between consecutive truncations of $v$ at an
increasing sequence of levels. Following the similar arguments as in
Section 2 in \cite{CV}, we can get a desired result.
\begin{proof}[Proof of Theorem
\ref{thm-L-infty}] We use the energy inequality for the levels
\begin{equation*}
C_k=N\left(1-\frac{1}{2^{k}}\right)
\end{equation*}
where $N$ will be chosen later. Multiplying the equation in
\eqref{eq-nlocal-PME-1} by the function $v_k=\left(v-C_k\right)_+$
and integrating in space, $\R^n$, we have
\begin{equation}\label{eq-L-infty-bound-1}
\frac{1}{m}\int_{\R^n}\frac{d}{dt}\left[\int_0^{v_k}\left(\xi+C_k\right)^{\frac{1}{m}-1}\xi\,d\xi\right]\,dx+\int_{\R^n}v_k\left[(-\La)^{\sigma}v_k\right]\,dx
\leq 0
\end{equation}
since
\begin{equation*}
(C_k+v_k)^{\frac{1}{m}}v_k=(C_k+\xi)^{\frac{1}{m}}\xi\Big|^{\xi=v_k}_{\xi=0}=\int_0^{v_k}\frac{d}{d\xi}\left[(C_k+\xi)^{\frac{1}{m}}\xi\right]\,d\xi.
\end{equation*}
Let us fix a $t_0>0$, we want to show that $v$ is bounded for
$t>t_0$. For
\begin{equation*}
T_k=t_0\left(1-\frac{1}{2^{k}}\right),
\end{equation*}
we integrate \eqref{eq-L-infty-bound-1} in time between $s$,
$T_{k-1}<s<T_k$, and $t>T_k$ and between $s$ and $+\infty$. Then we
find
\begin{equation*}
\begin{aligned}
\sup_{t\geq
T_k}\int_{\R^n}\left[\int_0^{v_k}\left(\xi+C_k\right)^{\frac{1}{m}-1}\xi\,d\xi\right]\,dx&+m\int_{T_k}^{\infty}\|v_k\|^2_{\dot{H}^{\sigma}}\,dt\\
&\leq
2\int_{\R^n}\left[\int_0^{v_k}\left(\xi+C_k\right)^{\frac{1}{m}-1}\xi\,d\xi\right](s)\,dx.
\end{aligned}
\end{equation*}
This leads to
\begin{equation*}
\sup_{t\geq
T_k}\int_{\Omega}v_k^{\frac{1}{m}+1}\,dx+(m+1)\int_{T_k}^{\infty}\|v_k\|^2_{\dot{H}^{\sigma}}\,dt\leq
(m+1)\int_{\Omega}v^{\frac{1}{m}-1}(s)v^2_k(s)\,dx
\end{equation*}
since $v_k$ has compact support in $\Omega$. In addition, H\"older
inequality gives
\begin{equation*}
\begin{aligned}
\int_{\Omega}v^{\frac{1}{m}-1}(s)v^2_k(s)\,dx&\leq
\left(\int_{\Omega}v^{\frac{1-m}{m}\cdot\frac{1+m}{1-m}}(s)\,dx\right)^{\frac{1-m}{1+m}}\left(\int_{\Omega}v_k^{2\cdot\frac{1+m}{2m}}(s)\,dx\right)^{\frac{2m}{1+m}}\\
&\leq
C\left(\int_{\Omega}v_k^{\frac{1+m}{m}}(s)\,dx\right)^{\frac{2m}{1+m}}
\end{aligned}
\end{equation*}
for some constant $C>0$. Hence, for the level set of energy
\begin{equation*}
U_k=\sup_{t\geq
T_k}\int_{\Omega}v_k^{\frac{1}{m}+1}\,dx+(m+1)\int_{T_k}^{\infty}\|v_k\|^2_{\dot{H}^{\sigma}}\,dt,
\end{equation*}
we have
\begin{equation*}
U_k\leq
2C(m+1)\left(\int_{\Omega}v_k^{\frac{1+m}{m}}(s)\,dx\right)^{\frac{2m}{1+m}}.
\end{equation*}
Taking the mean value in $s$ on $[T_{k-1},T_k]$, we find
\begin{equation*}
\begin{aligned}
U_k&\leq
\frac{C2^{k+1}(m+1)}{t_0}\int^{T_k}_{T_{k-1}}\left(\int_{\Omega}v_k^{\frac{1+m}{m}}\,dx\right)^{\frac{2m}{1+m}}\,dt\\
&\leq
C'\left(\frac{2^k}{t_0}\right)^{1-\frac{1-m}{1+m}}\left(\int^{T_k}_{T_{k-1}}\int_{\Omega}v_k^{\frac{1+m}{m}}\,dxdt\right)^{\frac{2m}{1+m}}\\
&\leq
C'\left(\frac{2^k}{t_0}\right)^{1-\frac{1-m}{1+m}}\left(\int^{\infty}_{T_{k-1}}\int_{\Omega}v_k^{\frac{1+m}{m}}\,dxdt\right)^{\frac{2m}{1+m}}
\end{aligned}
\end{equation*}
for some constant $C'=C'(\Omega,m,n,\sigma)$. We want to control the
right hand side by $U_{k-1}$. Sobolev and interpolation inequalities
give (See interpolation inequalites of $L^p$-space in Lemma
\ref{eq-Sobolev-Inequality}):
\begin{equation*}
U_{k-1}\geq
C\|v_{k-1}\|^2_{L^{2\left(1+\frac{(1+m)\sigma}{mn}\right)}([T_{k-1},\infty)\times\R^n)}.
\end{equation*}
Note that if $v_k>0$, then $v_{k-1}\geq \frac{N}{2^k}$. Thus
\begin{equation*}
\textbf{1}_{\{v_k>0\}}\leq
\left(\frac{2^k}{N}v_{k-1}\right)^{2\left(1+\frac{(1+m)\sigma}{mn}\right)-\frac{1+m}{m}}.
\end{equation*}
Hence
\begin{equation*}
\begin{aligned}
U_k&\leq
C'\left(\frac{2^k}{t_0}\right)^{1-\frac{1-m}{1+m}}\left(\int^{\infty}_{T_{k-1}}\int_{\Omega}v_{k-1}^{\frac{1+m}{m}}\cdot\textbf{1}_{\{v_k>0\}}\,dxdt\right)^{\frac{2m}{1+m}}\\
&\leq
C'\frac{\left(2^{\frac{4m}{1+m}\left(1+\frac{(1+m)\sigma}{mn}\right)-\frac{2}{1+m}}\right)^k}{t_0^{1-\frac{1-m}{1+m}}N^{\frac{4m}{1+m}\left(1+\frac{(1+m)\sigma}{mn}\right)-2}}\left(\int_{\Omega}v_{k-1}^{2\left(1+\frac{(1+m)\sigma}{mn}\right)}\,dxdt\right)^{\frac{2m}{1+m}}\\
&\leq
C'\frac{\left(2^{\frac{4m}{1+m}\left(1+\frac{(1+m)\sigma}{mn}\right)-\frac{2}{1+m}}\right)^k}{t_0^{1-\frac{1-m}{1+m}}N^{\frac{4m}{1+m}\left(1+\frac{(1+m)\sigma}{mn}\right)-2}}U_{k-1}^{\frac{2m}{1+m}\left(1+\frac{(1+m)\sigma}{mn}\right)}
\end{aligned}
\end{equation*}
Note that $\frac{2m}{1+m}\left(1+\frac{(1+m)\sigma}{mn}\right)>1$.
Thus, for $N$ such that $N^2t_0^{\frac{2mn}{2mn-(n-2\sigma)(1+m)}}$
big enough (depending on $U_0$), Lemma 4.1 of Chap. I in \cite{Di} ,
we have $U_k$ which converges to zero. This gives $v\leq N$ for
$t\geq t_0$. Hence we come to the conclusion using the fact that
$U_0\leq \|v(\cdot,0)\|^2_{L^{\frac{2n}{n-2\sigma}}}$.
\end{proof}
\subsection{Local Energy Estimate of $w^{\ast}$}
The rest of this section is devoted to the proof of the Sobolev and
local energy inequalities for the extension
$w^{\ast}(x,y,t)=M-v^{\ast}(x,y,t+t_0)$ with $M=\sup_{t\geq t_0>0}
v^{\ast}$. The effect of the non-local part of $(-\La)^{\sigma}$
becomes encoded locally in the extra variable. The first result,
Sobolev inequality, states as follows:
\begin{lemma}[Sobolev Inequality]\label{eq-Sobolev-Inequality}
For a cut-off  function $\eta$ compactly supported in $B_r$,
\begin{equation}\label{Sobolev-eq-1}
\|\eta\, v\|_{L^{\frac{2n}{n-2\sigma}}(\R^n)}\leq C\|\eta\,
v\|_{\dot{H}^{\sigma}(\R^n)}
\end{equation}
and
\begin{equation}
\begin{aligned}
\|\eta\, v\|^2_{L^2(t_1,t_2;L^{2}(\R^n))}\leq \,\,&\\
C\Big(\sup_{t_1\leq t\leq t_2}\|\eta\, v\|^2_{L^{2}(\R^n)}&+\|\D
(\eta\, v)^{\ast}\|^2_{L^2(t_1,t_2;L^2(B^*_r, y^a))}\Big)\,
|\{\eta\, v>0\}|^{\frac{2\sigma}{n+2\sigma}}
\end{aligned}
\end{equation}
for some $C>0$.
\end{lemma}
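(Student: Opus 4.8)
The plan is to treat the two estimates separately. Inequality \eqref{Sobolev-eq-1} is just the fractional Sobolev embedding $\dot{H}^{\sigma}(\R^n)\hookrightarrow L^{2n/(n-2\sigma)}(\R^n)$ (Chapter V of \cite{St}), recalled in the Introduction, applied to $\eta\,v$: since $\eta\in C_0^{\infty}(B_r)$ and $v(\cdot,t)\in\dot{H}^{\sigma}_0(\R^n)$ is supported in the bounded set $\overline{\Omega}$, the product $\eta\,v$ again lies in $\dot{H}^{\sigma}(\R^n)$. Via the Caffarelli--Silvestre extension \cite{CS} this may be rephrased as
\begin{equation*}
\|\eta\,v(\cdot,t)\|_{L^{2n/(n-2\sigma)}(\R^n)}^2\le C\,\|\D(\eta\,v)^{\ast}(\cdot,t)\|_{L^2(\R^{n+1}_+,\,y^a)}^2 ,
\end{equation*}
because the $\dot{H}^{\sigma}$-seminorm of $\eta\,v$ is comparable to the weighted Dirichlet energy of its $a$-harmonic extension $(\eta\,v)^{\ast}$; this is the form I feed into the second estimate.

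For the second inequality, write $u=\eta\,v$, put $q^{\ast}=\tfrac{2n}{n-2\sigma}$ and $p=\tfrac{2(n+2\sigma)}{n}=2+\tfrac{4\sigma}{n}$, and abbreviate $A=\sup_{t_1\le t\le t_2}\|u(\cdot,t)\|_{L^2(\R^n)}^2$ and $\mathcal E=\|\D u^{\ast}\|_{L^2(t_1,t_2;\,L^2(B^{\ast}_r,\,y^a))}^2$. First I would interpolate, for a.e.\ $t$, between $L^2_x$ and $L^{q^{\ast}}_x$: $\|u\|_{L^p_x}\le\|u\|_{L^2_x}^{1-\lambda}\|u\|_{L^{q^{\ast}}_x}^{\lambda}$ with $\lambda=\tfrac{n}{n+2\sigma}$ (the exponent forced by $\tfrac1p=\tfrac{1-\lambda}{2}+\tfrac{\lambda}{q^{\ast}}$). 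Since $(1-\lambda)p=\tfrac{4\sigma}{n}$ and $\lambda p=2$, combining with the extension form of \eqref{Sobolev-eq-1} gives
\begin{equation*}
\|u(\cdot,t)\|_{L^p_x}^p\le\|u(\cdot,t)\|_{L^2_x}^{4\sigma/n}\,\|u(\cdot,t)\|_{L^{q^{\ast}}_x}^{2}\le C\,A^{2\sigma/n}\,\|\D u^{\ast}(\cdot,t)\|_{L^2(B^{\ast}_r,\,y^a)}^2 .
\end{equation*}
Integrating in $t$ and raising to the power $\tfrac2p=\tfrac{n}{n+2\sigma}$ yields $\|u\|_{L^p(t_1,t_2;L^p(\R^n))}^2\le C\,A^{2\sigma/(n+2\sigma)}\,\mathcal E^{\,n/(n+2\sigma)}$; since these two exponents are nonnegative and sum to $1$, Young's inequality upgrades this to the parabolic Sobolev bound $\|u\|_{L^p(t_1,t_2;L^p(\R^n))}^2\le C(A+\mathcal E)$.

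Finally, to bring in the measure of the positivity set, I would write $u^2=u^2\,\mathbf 1_{\{u>0\}}$ and apply H\"older's inequality in the $(x,t)$ variables with exponents $\tfrac p2$ and $\tfrac p{p-2}$:
\begin{equation*}
\int_{t_1}^{t_2}\!\!\int_{\R^n}u^2\,dx\,dt\le\Big(\int_{t_1}^{t_2}\!\!\int_{\R^n}u^p\,dx\,dt\Big)^{2/p}\,|\{u>0\}|^{1-2/p}=\|u\|_{L^p(t_1,t_2;L^p(\R^n))}^2\,|\{u>0\}|^{2\sigma/(n+2\sigma)} ,
\end{equation*}
since $1-\tfrac2p=\tfrac{2\sigma}{n+2\sigma}$; together with the parabolic Sobolev bound this is exactly the asserted estimate. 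The step I expect to be the main obstacle is the one used in the second display, namely controlling $\|u(\cdot,t)\|_{L^{q^{\ast}}_x}$ by the weighted Dirichlet energy of $u^{\ast}$ over the \emph{truncated} half-cylinder $B^{\ast}_r$ and not over all of $\R^{n+1}_+$: here one must use that $u=\eta\,v$ is supported in $B_r$, so that the energy of the $a$-harmonic extension is essentially concentrated over $B^{\ast}_r$ up to a controlled tail (equivalently, one inserts an auxiliary cutoff in the $(x,y)$ variables at scale $r$ and absorbs the resulting lower-order terms). Everything else is the routine interpolation and H\"older bookkeeping indicated above.
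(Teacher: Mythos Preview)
Your proof is correct and follows essentially the same path as the paper: fractional Sobolev embedding for \eqref{Sobolev-eq-1}, then H\"older against the characteristic function of $\{\eta v>0\}$ combined with the $L^2$--$L^{2n/(n-2\sigma)}$ interpolation to reach the exponent $p=\tfrac{2(n+2\sigma)}{n}$. The only cosmetic difference is the order of operations: the paper applies H\"older in $(x,t)$ first and then interpolates in $x$ at each time, whereas you interpolate first and apply H\"older last; the arithmetic is identical.

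On the point you flag as the main obstacle---passing from the weighted Dirichlet energy over $\R^{n+1}_+$ to the energy over the truncated $B^{\ast}_r$---the paper's own proof in fact terminates with the full half-space integral $\int_0^{\infty}\!\!\int_{\R^n}y^a|\nabla(\eta v)^{\ast}|^2\,dx\,dy$, not with $B^{\ast}_r$. The replacement by a bounded region is carried out later, at the point of application (Lemma~\ref{lem-Hoilder-1}), and uses a cleaner mechanism than the tail estimate you sketch: since $(\eta v)^{\ast}$ is the $a$-harmonic extension of $\eta v$, it \emph{minimizes} the weighted Dirichlet energy among all extensions with the same trace, so any competitor supported in a truncated strip automatically has larger energy. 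This is the step you should invoke rather than a cutoff-and-absorb argument.
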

\begin{proof}
The first inequality is well known result. The Sobolev embedding
results say that $\dot{H}^{\sigma}\subset L^{2n/(n-2\sigma)}$ (See,
for example, Chapter V in \cite{St}).\\
\indent To prove the second inequality, let $\chi_{\eta v}(x,t)$ be
the function with
\begin{equation*}
\chi_{_{\eta v}}=\begin{cases}
                     \begin{array}{ccc}
                           1 & \qquad &\eta v > 0\\
                           0 & \qquad &\eta v=0,
                     \end{array}
                \end{cases}
\end{equation*}
then we have
\begin{equation*}
\|\eta v\|^2_{L(t_1,t_2;L^2(\R^n)}=\int_{t_1}^{t_2}\int_{\R^n}|\eta
v|^2dxdt=\int_{t_1}^{t_2}\int_{\R^n}|\eta v|^2\chi_{_{\eta v}}dxdt.
\end{equation*}
Thus, by the H\"older inequality, we obtain
\begin{equation*}
\begin{aligned}
\|\eta v\|^2_{L(t_1,t_2;L^2(\R^n)}&\leq
\int_{t_1}^{t_2}\left(\int_{\R^n}|\eta
v|^{2\cdot\left(\frac{n+2\sigma}{n}\right)}dx\right)^{\frac{n}{n+2\sigma}}\left(\int_{\R^n}\left(\chi_{_{\eta
v}}\right)^{\frac{n+2\sigma}{2\sigma}}dx\right)^{\frac{2\sigma}{n+2\sigma}}dt\\
&\leq \left(\int_{t_1}^{t_2}\int_{\R^n}|\eta
v|^{2\cdot\left(\frac{n+2\sigma}{n}\right)}dxdt\right)^{\frac{n}{n+2\sigma}}\left(\int_{t_1}^{t_2}\int_{\R^n}\chi_{_{\eta
v}}dxdt\right)^{\frac{2\sigma}{n+2\sigma}}\\
&=\left(\int_{t_1}^{t_2}\int_{\R^n}|\eta
v|^{2\cdot\left(\frac{n+2\sigma}{n}\right)}dxdt\right)^{\frac{n}{n+2\sigma}}|\{\eta
v>0\}|^{\frac{2\sigma}{n+2\sigma}}.
\end{aligned}
\end{equation*}
Now we use interpolation inequalities of $L^p$ spaces,
\begin{equation*}
\begin{aligned}
&\left(\int_{t_1}^{t_2}\int_{\R^n}|\eta
v|^{2\cdot\left(\frac{n+2\sigma}{n}\right)}dxdt\right)^{\frac{n}{n+2\sigma}}\leq\\
&\qquad \qquad \qquad \qquad
\left[\int_{t_1}^{t_2}\left(\int_{\R^n}|\eta
v|^2dx\right)^{(1-\beta)p}\left(\int_{\R^n}|\eta
v|^{2q}dx\right)^{\frac{\beta p}{q}}dt\right]^{\frac{1}{p}}
\end{aligned}
\end{equation*}
where $1<p=\frac{n+2\sigma}{n}<q$ and
$\frac{1}{p}=\frac{\beta}{q}+\frac{1-\beta}{1}$
$\left(\beta=\frac{1}{p}\right)$.\\
Thus
\begin{equation*}
\begin{aligned}
&\left(\int_{t_1}^{t_2}\int_{\R^n}|\eta
v|^{2\cdot\left(\frac{n+2\sigma}{n}\right)}dxdt\right)^{\frac{n}{n+2\sigma}}\leq\\
&\qquad \qquad \qquad \qquad \sup_{t_1\leq t\leq
t_2}\left(\int_{\R^n}|\eta
v|^2dx\right)+\int_{t_1}^{t_2}\left(\int_{\R^n}|\eta
v|^{2q}dx\right)^{\frac{1}{q}}dt.
\end{aligned}
\end{equation*}
where $q=\frac{n}{n-2\sigma}$. From the first Sobolev inequality
(\ref{Sobolev-eq-1}), we have
\begin{equation*}
\begin{aligned}
\left(\int_{\R^n}|\eta v|^{2q}dx\right)^{\frac{1}{q}}&\leq C\|\eta
v\|^2_{\dot{H}^{\sigma}}=C\int_{\R^n}\eta v(-\La)^{\sigma}(\eta
v)dx\\
&=C\int_0^{\infty}\int_{\R^n}y^a|\nabla(\eta v)^{\ast}|^2dxdy,
\end{aligned}
\end{equation*}
which gives the desired result.
\end{proof}
Next, we will derive local energy estimate in the interior of
$\Omega\times(0,\infty)$ which will be the main tools in
establishing local H\"older estimates for the solutions. Assuming
that the cube $B_r=[-r,r]^n\subset\Omega$.
\begin{lemma}[Local Energy Estimate]\label{lem-Loc-Ene-Est-1}
\item Let $t_1$, $t_2$ be such that $t_1<t_2$ and let $v^{\ast}\in L^{\infty}(t_1,t_2;L^2(\R^n\times\R^+))$ be
solution to \eqref{eq-Holder} and let
$w^{\ast}(x,y,t)=M-v^{\ast}(x,y,t+t_0)$ with $M=\sup_{t\geq t_0>0}
v^{\ast}$. Then, there exists a constant $\lambda$ such that for
every $t_1\leq t\leq t_2$ and cut-off function $\eta$ such that the
restriction of $\eta(w^{\ast}-k)_\pm$ on $B^{\ast}_r$ is compactly
supported in $B_r\times(-r,r)$:
\begin{equation}\label{Holder-energy-estimate}
\begin{aligned}
&\frac{1}{m}\int_{B_r\times
  \{t_2\}}\eta^2\left[\int_0^{(w-k)_\pm}(M-k\mp\xi)^{\frac{1}{m}-1}\xi\,
d\xi\right]\, dx\\
&\qquad+\int_{t_1}^{t_2}\int_{B^*_r}|\D (\eta (w^*-k)_\pm)|^2\,y^adx\,dy\,dt\\
&\leq \int_{t_1}^{t_2}\int_{B^*_r} |(\D
\eta)(w^*-k)_\pm|^2\,y^adx\,dy\,dt\\
&\qquad+\frac{2}{m}\int_{t_1}^{t_2}\int_{B_r}
\left[\int_0^{(w-k)_\pm}(M-k\mp\xi)^{\frac{1}{m}-1}\xi\,
d\xi\right]|\eta\eta_t|\, dx\,dt\\
&\qquad+\frac{1}{m}\int_{B_r\times\{t_1\}}
\eta^2\left[\int_0^{(w-k)_\pm}(M-k\mp\xi)^{\frac{1}{m}-1}\xi\,
d\xi\right]\, dx
\end{aligned}
\end{equation}
\end{lemma}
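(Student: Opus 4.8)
The plan is to derive the energy inequality by testing the extension equation \eqref{eq-Hoilder(M-v)} against the natural truncated test function and integrating by parts, both in the bulk variables $(x,y)$ and in time. Since $w^{\ast}$ solves $\D(y^a\D w^{\ast})=0$ in $y>0$ with the Neumann-type condition $-\lim_{y\to0^+}y^a w^{\ast}_y=[(M-w^{\ast})^{1/m}]_t$ on $\Omega$, the correct multiplier is $\varphi=\eta^2(w^{\ast}-k)_\pm$, where $\eta=\eta(x,y,t)$ is the cut-off whose restriction to $B^{\ast}_r$ is compactly supported in $B_r\times(-r,r)$. Multiplying the bulk equation by $\varphi$ and integrating over $B^{\ast}_r\times(t_1,t_2)$, the only boundary contribution from integration by parts in $(x,y)$ comes from the face $\{y=0\}$ (the lateral and upper faces vanish by the support assumption on $\eta$), and there $y^a w^{\ast}_y$ is replaced by $-[(M-w^{\ast})^{1/m}]_t$ using the boundary condition. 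This produces the identity
\begin{equation*}
\int_{t_1}^{t_2}\int_{B^*_r} y^a\,\D w^{\ast}\cdot\D\bigl(\eta^2(w^{\ast}-k)_\pm\bigr)\,dx\,dy\,dt
=-\int_{t_1}^{t_2}\int_{B_r}\eta^2(w^{\ast}-k)_\pm\bigl[(M-w^{\ast})^{1/m}\bigr]_t\,dx\,dt.
\end{equation*}

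**Next** I would massage each side. On the left, expand $\D(\eta^2(w^{\ast}-k)_\pm)=\eta^2\D(w^{\ast}-k)_\pm+2\eta(w^{\ast}-k)_\pm\D\eta$, note that $\D w^{\ast}\cdot\D(w^{\ast}-k)_\pm=|\D(w^{\ast}-k)_\pm|^2$, and use the pointwise inequality
\begin{equation*}
|\D(\eta(w^{\ast}-k)_\pm)|^2\le 2\eta^2|\D(w^{\ast}-k)_\pm|^2+2(w^{\ast}-k)_\pm^2|\D\eta|^2
\end{equation*}
together with Young's inequality on the cross term $2\eta(w^{\ast}-k)_\pm\,\D w^{\ast}\cdot\D\eta$ to absorb it; this yields the gradient term $\int\int_{B^{\ast}_r}|\D(\eta(w^{\ast}-k)_\pm)|^2 y^a$ on the left and the error term $\int\int_{B^{\ast}_r}|(\D\eta)(w^{\ast}-k)_\pm|^2 y^a$ on the right of \eqref{Holder-energy-estimate}. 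On the right-hand side, since $w^{\ast}=M-v^{\ast}$ and $v^{\ast}_t=-(w^{\ast})_t$, one has $[(M-w^{\ast})^{1/m}]_t=(1/m)(M-w^{\ast})^{1/m-1}(M-w^{\ast})_t=-(1/m)(M-w^{\ast})^{1/m-1}w^{\ast}_t$, and I would recognize the chain-rule primitive: writing $\xi=(w^{\ast}-k)_\pm$ and differentiating under the substitution,
\begin{equation*}
(w^{\ast}-k)_\pm\bigl[(M-w^{\ast})^{1/m}\bigr]_t
=\mp\frac{1}{m}\,\partial_t\!\left[\int_0^{(w^{\ast}-k)_\pm}(M-k\mp\xi)^{1/m-1}\xi\,d\xi\right]
\pm\text{(no extra terms)},
\end{equation*}
the sign bookkeeping matching the $\pm$ convention in the statement. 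Integrating this time-derivative from $t_1$ to $t_2$ produces the boundary-in-time terms at $\{t_1\}$ and $\{t_2\}$, and expanding $\partial_t(\eta^2\cdot)=\eta^2\partial_t(\cdot)+2\eta\eta_t(\cdot)$ yields the $|\eta\eta_t|$ term (with the factor $2/m$) after bounding by absolute values.

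**The main obstacle** I anticipate is the rigorous justification of the primitive identity and the sign analysis for both the $+$ and $-$ truncations simultaneously: one must check that $(M-k\mp\xi)^{1/m-1}$ is integrable near $\xi=(w^{\ast}-k)_\pm$ — which requires $M-w^{\ast}=v^{\ast}\ge0$ and $M-k\ge0$ to stay in the admissible range where the exponent $1/m-1>0$ causes no singularity — and that the time-differentiation under the integral sign is legitimate, i.e. that $w^{\ast}$ has enough regularity in $t$. This is handled by working first with the smooth approximations $v_n$ from the existence lemma (where $1/n\le v_n\le N+1/n$ keeps everything bounded away from the degeneracy), establishing \eqref{Holder-energy-estimate} for $v_n^{\ast}$ by the above computation, and then passing to the limit $n\to\infty$ using the $L^1$-contraction and the monotone/$L^p$ convergence already proved; lower semicontinuity of the Dirichlet-type energy $\int\int y^a|\D\cdot|^2$ under weak convergence preserves the inequality direction for the gradient term on the left. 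A secondary technical point is that $\eta$ depends on $(x,y,t)$, so the cross terms in both the spatial and temporal integrations by parts must be tracked carefully; but these only generate the stated error terms and nothing more, since $\eta(w^{\ast}-k)_\pm$ is compactly supported in $B_r\times(-r,r)$ away from the lateral and top boundaries of $B^{\ast}_r$.
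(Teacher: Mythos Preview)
Your proposal is correct and follows essentially the same route as the paper: test the extension equation against $\eta^2(w^{\ast}-k)_\pm$, integrate by parts in $(x,y)$ to pick up the boundary flux at $y=0$, convert the resulting time term into a total $t$-derivative via the primitive $\int_0^{(w-k)_\pm}(M-k\mp\xi)^{1/m-1}\xi\,d\xi$, and integrate in $t$. The only notable difference is that the paper uses the exact Caccioppoli identity
\[
y^a\,\D w^{\ast}\cdot\D\bigl(\eta^2(w^{\ast}-k)_+\bigr)=y^a\,|\D(\eta(w^{\ast}-k)_+)|^2-y^a\,((w^{\ast}-k)_+)^2|\D\eta|^2
\]
in place of your Young-type splitting, which delivers the precise constants in \eqref{Holder-energy-estimate} without loss; you should also recheck the sign in your first displayed identity (the boundary term at $y=0$ comes with a $+$, not a $-$, once the Neumann condition is inserted).
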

\begin{proof}
We have for every $t_1<t<t_2$:
\begin{equation*}
\begin{aligned}
0&=\int_{B^{\ast}_r}\eta^2(w^{\ast}-k)_+\nabla(y^a\nabla
w^{\ast})\,\,dxdy\\
&=-\int_{B^{\ast}_r}y^a\eta\nabla\big(\eta(w^{\ast}-k)_+\big)\cdot
\nabla w^{\ast}\,\,dxdy-\int_{B^{\ast}_r}y^a\eta(w^{\ast}-k)_+\nabla
\eta\cdot \nabla w^{\ast}\,\,dxdy\\
& \qquad \qquad \qquad +\int_{B_r}\eta^2(w-k)_+(-\La)^{\sigma}w
\,\,dx\\
&=-\int_{B^{\ast}_r}y^a|\nabla\big(\eta(w^{\ast}-k)_+\big)|^2\,dxdy+\int_{B^{\ast}_r}y^aw^{\ast}\nabla\big(\eta(w^{\ast}-k)_+\big)\cdot\nabla\eta\,dxdy\\
&\qquad -\int_{B^{\ast}_r}y^a\eta(w^{\ast}-k)_+\nabla \eta\cdot
\nabla w^{\ast}\,\,dxdy+\int_{B_r}\eta^2(w-k)_+(-\La)^{\sigma}w
\,dx\\
&=-\int_{B^{\ast}_r}y^a|\nabla\big(\eta(w^{\ast}-k)_+\big)|^2\,\,dxdy+\int_{B^{\ast}_r}y^a\big((w^{\ast}-k)_+\big)^2|\nabla\eta|^2\,dxdy\\
& \qquad \qquad \qquad +\int_{B_r}\eta^2(w-k)_+(-\La)^{\sigma}w
\,dx.
\end{aligned}
\end{equation*}
Using the equation (\ref{eq-Hoilder(M-v)}), we find that
\begin{equation*}
\begin{aligned}
&\int_{B_r}\eta^2(w-k)_+(-\La)^{\sigma}w
\,dx=\int_{B_r}\eta^2(w-k)_+\left[(M-w)^{\frac{1}{m}}\right]_t\,dx\\
&\qquad \quad=\int_{B_r}\eta^2(w-k)_+\left[(M-k-(w-k)_+)^{\frac{1}{m}}\right]_t\,dx\\
&\qquad \quad=\int_{B_r}\eta^2\Big[(w-k)_+(M-k-(w-k)_+)^{\frac{1}{m}}\\
&\qquad \qquad \quad +\frac{m}{m+1}(M-k-(w-k)_+)^{\frac{m+1}{m}}-\frac{m}{m+1}(M-k)^{\frac{m+1}{m}}\Big]_t\,dx\\
\end{aligned}
\end{equation*}
Since
\begin{equation*}
\begin{aligned}
(M-k-(w-k)_+)^{\frac{1}{m}}(w-k)_+&=(M-k-\xi)^{\frac{1}{m}}\xi\Big|^{\xi=(w-k)_+}_{\xi=0}\\
&=\int^{(w-k)_+}_{0}\frac{d}{d\xi}\left[(M-k-\xi)^{\frac{1}{m}}\xi\right]\,d\xi,
\end{aligned}
\end{equation*}
we get the desired result from
\begin{equation*}
\begin{aligned}
(w-k)_+(M-&k-(w-k)_+)^{\frac{1}{m}}+\frac{m}{m+1}(M-k-(w-k)_+)^{\frac{m+1}{m}}\\
&-\frac{m}{m+1}(M-k)^{\frac{m+1}{m}}=-\frac{1}{m}\int^{(w-k)_+}_{0}(M-k-\xi)^{\frac{1}{m}-1}\xi\,\,d\xi.
\end{aligned}
\end{equation*}
The estimate of $(w-k)_-$ can be obtained in a similar manner.
\end{proof}
\section{H\"older Regularity of $w^{\ast}$}\label{subsec-holder-fdf}

\setcounter{equation}{0}
\setcounter{thm}{0}

From now on, we begin the story of H\"older continuity of $w^{\ast}$
the solution of \eqref{eq-Hoilder(M-v)}. In order to develop the
H\"older regularity method, we get over the two humps: non-local
diffusion and degeneracy. We first follow Caffarelli and Vasseur's
ideas \cite{CV} to solve a difficulty stemming from non-linear
evolution equations with fractional diffusion. Also we use the
technique developed in \cite{Di}, \cite{DK} and \cite{YD1} to
overcome the degeneracy of equation. \\
\indent The key idea of the proof is to work with cylinders whose
dimensions are suitably rescaled to reflect the degeneracy exhibited
by the equation. To make this precise, fix
$(x_0,t_0)\in\Omega\times(0,T]$, for some $T>0$, and construct the
cylinder
\begin{equation*}
Q_{2R}(x_0,t_0)\subset \Omega\times(0,T].
\end{equation*}
After a translation we may assume that $(x_0,t_0)=(0,0)$. Set
\begin{equation*}
\mu^+=\sup_{Q^{\ast}_{2R}}w^{\ast},\qquad
\mu^-=\inf_{Q^{\ast}_{2R}}w^{\ast},\qquad
\omega=\osc_{Q^{\ast}_{2R}}w^{\ast}=\mu^+-\mu^-
\end{equation*}
and construct the cylinder
\begin{equation*}
Q^{\ast}_R(\theta_0)=B_R^{\ast}\times\left(-\frac{R^{2\sigma}}{\theta_0^{\alpha}},0\right),
\qquad \qquad \theta_0=\frac{\omega}{A}, \quad \alpha=1-\frac{1}{m}
\end{equation*}
where $A$ is a constant to be determined later only in terms of  the
data. We will assume that
\begin{equation*}
\left(\frac{\omega}{A}\right)^{\alpha}\geq 1.
\end{equation*}
This implies the inclusion
\begin{equation*}
Q^{\ast}_R(\theta_0)\subset Q^{\ast}_{2R}
\end{equation*}
and the inequality
\begin{equation*}
\osc_{Q^{\ast}_R(\theta_0)}w^{\ast}\leq \omega.
\end{equation*}
Then, the first alternative in this section states as follows:
\begin{lemma}\label{lem-Hoilder-1}
There exists positive numbers $\rho$ and $\lambda$ independent of
$\mu^{\pm}$ and $\omega$ such that if
\begin{equation}\label{eq-p-harnack-cond}
\left|\left\{(x,t)\in
  Q_R(\theta_0); w(x,t)>\mu^+-\frac{\omega}{2}\right\}\right|<\rho|Q_R(\theta_0)|
\end{equation}
then
\begin{equation*}
w(x,t)<\mu^+-\frac{\lambda\omega}{4}
\end{equation*}
for all $(x,t)\in Q_{\frac{R}{2}}(\theta_0)$.
\end{lemma}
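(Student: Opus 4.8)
The plan is to run the De Giorgi iteration on the super-level sets of $w$ near the supremum $\mu^+$, but working with the extension $w^{\ast}$ so that the energy inequality of Lemma \ref{lem-Loc-Ene-Est-1} is available. Set, for $j\ge 0$,
\[
k_j=\mu^+-\frac{\omega}{4}-\frac{\omega}{2^{j+2}},\qquad
R_j=\frac{R}{2}+\frac{R}{2^{j+1}},
\]
so $k_j\nearrow \mu^+-\omega/4$ and $R_j\searrow R/2$, and choose cut-off functions $\eta_j$ in the cube $B^{\ast}_{R_j}$, equal to $1$ on $B^{\ast}_{R_{j+1}}$, with $|\D\eta_j|\lesssim 2^j/R$ and $|\partial_t\eta_j|\lesssim 2^j\theta_0^{\alpha}/R^{2\sigma}$ (the time scale is dictated by the cylinder $Q_R(\theta_0)$). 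Write
\[
A_j=\big\{(x,t)\in Q_{R_j}(\theta_0):\ w(x,t)>k_j\big\},\qquad
Y_j=\frac{|A_j|}{|Q_{R_j}(\theta_0)|}.
\]
The goal is the recursive inequality $Y_{j+1}\le C\,b^{j}\,Y_j^{1+\kappa}$ for some $C,b>1$ and $\kappa>0$ depending only on the data; then the fast-geometric-convergence lemma (Lemma 4.1 of Chap. I in \cite{Di}, already invoked in the proof of Theorem \ref{thm-L-infty}) gives $Y_j\to0$ provided $Y_0$ — which is exactly the quantity bounded by $\rho$ in \eqref{eq-p-harnack-cond} — is smaller than a threshold $\nu_0(\text{data})$. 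Choosing $\rho=\nu_0$ yields $|A_\infty|=0$, i.e. $w\le \mu^+-\omega/4$ a.e. on $Q_{R/2}(\theta_0)$, which is the assertion with $\lambda=1$ (and one keeps the weaker $\lambda\omega/4$ in the statement for flexibility in the subsequent alternatives).

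The mechanism producing the exponent $1+\kappa$ is the Sobolev inequality of Lemma \ref{eq-Sobolev-Inequality} applied to $(\eta_j(w^{\ast}-k_j)_+)$: its space-time $L^2$ norm is controlled by
\[
\Big(\sup_t\|\eta_j(w-k_j)_+\|^2_{L^2(\R^n)}+\|\D(\eta_j(w^{\ast}-k_j)_+)\|^2_{L^2(Q^{\ast}_{R_j};y^a)}\Big)\,
\big|\{\eta_j(w-k_j)_+>0\}\big|^{\frac{2\sigma}{n+2\sigma}},
\]
and the two terms in the first factor are precisely the two terms estimated from above by the energy inequality \eqref{Holder-energy-estimate}. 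One must first convert the weighted "entropy" term $\int_0^{(w-k)_+}(M-k-\xi)^{1/m-1}\xi\,d\xi$ appearing in \eqref{Holder-energy-estimate} into a clean multiple of $(w-k)_+^2$: since $0\le (w-k)_+\le \omega$ and the oscillation can be assumed small (otherwise there is nothing to prove), the factor $(M-k-\xi)^{1/m-1}$ is comparable to a positive constant depending on $m$ and on a lower bound for $M-\mu^+$, which is where the boundedness hypothesis on $v$ in Theorem \ref{thm-main} and the definition $M=\sup_{t\ge t_0}v^{\ast}$ enter. Feeding the energy bound into Sobolev, bounding the right-hand side of \eqref{Holder-energy-estimate} by $C(2^j/R)^{2}\,|A_j|$ (the dominant term, up to the comparable time-derivative term which carries the extra $\theta_0^{\alpha}/R^{2\sigma}$ but is absorbed after rescaling $t\mapsto \theta_0^{\alpha}t$), and finally using $(w-k_{j+1})_+\ge \omega/2^{j+3}$ on $A_{j+1}$ together with Chebyshev,
\[
\Big(\tfrac{\omega}{2^{j+3}}\Big)^2|A_{j+1}|\le \int_{A_{j+1}}(w-k_j)_+^2\le
\|\eta_j(w-k_j)_+\|^2_{L^2(L^2)}\le C\,b^{j}\,\frac{|A_j|^{1+\frac{2\sigma}{n+2\sigma}}}{R^{2}},
\]
and then dividing by $|Q_{R_{j+1}}(\theta_0)|$ gives the claimed $Y_{j+1}\le C\,\tilde b^{\,j}\,Y_j^{1+\kappa}$ with $\kappa=\frac{2\sigma}{n+2\sigma}$ and a $\tilde b$ absorbing the powers of $R$, $\omega$ and $\theta_0$ — crucially $R$-, $\omega$-, $\mu^{\pm}$-independent once one checks that all such factors cancel, which is the reason the statement insists $\rho,\lambda$ be independent of $\mu^{\pm},\omega$.

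The main obstacle I expect is precisely this bookkeeping of scaling-invariance: the time dilation built into $Q_R(\theta_0)$, the weight $y^a$ in the extension, and the nonlinearity $(M-w)^{1/m}$ all contribute powers of $\omega$ and $\theta_0=\omega/A$ to the constants, and one has to verify — using $(\omega/A)^{\alpha}\ge1$ and $\alpha=1-1/m<0$ — that after dividing by $|Q_{R_j}(\theta_0)|=c\,R_j^{n+2\sigma}\theta_0^{-\alpha}$ the resulting recursion constant depends only on $n,m,\sigma$ (and the fixed lower bound for $M-\mu^+$ coming from boundedness of $v$), not on $\omega$, $R$, or the location in the range of $w$. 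A secondary technical point is the compact-support requirement on $\eta(w^{\ast}-k)_\pm$ in Lemma \ref{lem-Loc-Ene-Est-1}: one checks it holds for $(w^{\ast}-k_j)_+$ on $B^{\ast}_{R_j}$ because $B_{2R}\subset\Omega$ forces $w^{\ast}<\mu^+$ away from the relevant region only after noting $\mu^+-\omega/4>k_j$, so the truncation is genuinely interior in $x$ and the $y$-cut-off handles the extension direction. Once these are settled, the iteration is the standard De Giorgi machine and the conclusion follows as above.
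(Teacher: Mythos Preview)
Your proposal captures the overall De Giorgi strategy but misses the technical ingredient that makes the argument close in the extended setting, and which in fact occupies most of the paper's proof (its Steps~1 and~4).

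The gap is this. The right-hand side of the energy inequality \eqref{Holder-energy-estimate} contains the \emph{bulk} term
\[
\int_{t_1}^{t_2}\!\int_{B^{\ast}_{R_j}} |\nabla\eta_j|^2\,(w^{\ast}-k_j)_+^2\,y^a\,dx\,dy\,dt,
\]
an integral over $B^{\ast}_{R_j}=B_{R_j}\times(0,R_j)$ in the extended variables $(x,y)$, whereas the iterate $|A_j|$ you want to recover is a \emph{boundary} quantity, a measure in $(x,t)$ only. You assert that this term is bounded by $C(2^j/R)^2|A_j|$, but nothing you have written gives that: the set $\{w^{\ast}>k_j\}$ in $B^{\ast}_{R_j}$ can have large weighted measure even when its trace $\{w>k_j\}$ on $\{y=0\}$ is small. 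Letting $\eta_j$ cut off in $y$ does not help, since the $|\nabla\eta_j|^2$ contribution then picks up the full $(w^{\ast}-k_j)_+$ across the whole layer. Your ``secondary technical point'' about compact support is thus the heart of the matter, and without an additional idea the recursion $Y_{j+1}\le C b^j Y_j^{1+\kappa}$ does not follow from the energy and Sobolev lemmas alone.

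The paper closes this gap by the Caffarelli--Vasseur mechanism: one proves, simultaneously with the measure decay, that $\eta_k(w^{\ast}-l_k)_+$ is supported in the thin layer $\{0\le y\le \delta^k/4\}$, and that on this layer $(w^{\ast}-l_{k+1})_+$ is controlled by $(\eta_k w_k)\ast P(y)$, the Poisson extension of the boundary data. The shrinking support is obtained via two barriers: a function $h_1$ (solving $\nabla\cdot(y^a\nabla h_1)=0$ in $B^{\ast}_1$, equal to $1$ on the lateral sides and $0$ on $\{y=0\}$) satisfying $h_1\le 1-2\lambda$ on $B^{\ast}_{1/2}$, and an exponentially decaying barrier $h_2$ on a strip. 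The first of these is what \emph{defines} the constant $\lambda$ in the statement --- it is not $1$ as you suggest, and the iteration levels are accordingly $l_k=\mu^+-\lambda(\omega/4+\omega/2^{k+2})$ rather than your $k_j$. Only after the support-in-$y$ control is in place can the bulk $L^2$ norm of $(w^{\ast}-l_k)_+$ be bounded by the boundary super-level measure (this is the paper's estimate \eqref{eq-convolution-1}), and then the De Giorgi recursion proceeds essentially as you describe. A smaller point: the lower bound on the entropy integrand is not ``comparable to a constant depending on a lower bound for $M-\mu^+$'' but rather carries a factor $(M-k)^{-\alpha}\sim(\lambda\omega)^{-\alpha}$, and it is precisely this $\omega$-dependence that cancels against the intrinsic time scaling $\theta_0^{\alpha}=(\omega/A)^{\alpha}$ to make the final recursion constant independent of $\omega$.
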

\begin{proof}
{\bf Step 1. Useful barrier functions:} Consider the function
$h_1(x,y)$, defined by:
\begin{equation}\label{eq-h}
\begin{aligned}
&\quad \D(y^a\D h_1)=0\qquad \qquad\text{in $B_{1}^*$}\\
&\qquad h_1=1\qquad \qquad \quad \text{on $\partial B_{1}^* \cap\{y>0\}$}\\
&\qquad h_1=0\qquad \qquad \quad \text{on $y=0$}.
\end{aligned}
\end{equation}
Then, following directly the maximum principle, there is uniform
constant $0<\lambda<1/2$ such that
\begin{equation*}
h_1(x,y)\leq (1-2\lambda)\quad \text{on $B_{1/2}^*$}.
\end{equation*}
For the scaling invariance of the equation for $h_1$,
\begin{equation*}
h_{1,R}(x,y)=h_1\left(\frac{x}{R},\frac{y}{R}\right)
\end{equation*}
satisfies \eqref{eq-h} in $B_R^*$ and
\begin{equation*}h_{1,R}(x,y)\leq (1-2\lambda)\quad \text{on
$B_{R/2}^*$}.
\end{equation*}
{\bf Step 2. Notations for the induction:} Set, for any non-negative
integer $k$,
\begin{equation*}
R_k=\frac{R}{2}+\frac{R}{2^{k+1}}, \quad \mbox{and} \quad
l_k=\mu^+-\lambda\left(\frac{\omega}{4}+\frac{\omega}{2^{k+2}}\right).
\end{equation*}
We denote by $\tilde{B}^*_{R,\delta}$ the set $B_{R}\times
(0,\delta)$ and introduce the cylinders
\begin{equation*}
Q_{k}(\theta_0)=B_{\overline{R}_k}\times
\left(-\theta_0^{-\alpha}R_{k}^{2\sigma},0\right)
\end{equation*}
and
\begin{equation*}
\tilde{Q}^*_{R_k,\frac{\delta^k}{4}}(\theta_0)=\tilde{B}^*_{R_k,\frac{\delta^k}{4}}\times
\left(-\theta_0^{-\alpha}R_k^{2\sigma},0\right).
\end{equation*}
We also denote
\begin{equation*}
B(l_k,R_k)=\{(x,t)\in Q_{R_k}(\theta_0); w(x,t)>l_k\},
\end{equation*}
\begin{equation*}
B^*(l_k,R_k)=\{(x,y,t)\in Q^*_{R_k}(\theta_0); w^*(x,y,t)>l_k\}
\end{equation*}
and
\begin{equation*}
\tilde{B}^*\left(l_k,R_k,\frac{\delta^k}{4}\right)=\{(x,y,t)\in
\tilde{Q}^*_{R_k,\frac{\delta^k}{4}}(\omega); w^*(x,y,t)>l_k\}.
\end{equation*}
From above, we define:
\begin{equation*}
|B(l_k,R_k)|=\int^{0}_{-\theta_0^{-\alpha}R_k^{2\sigma}}|\{x\in
B_{R_k};\, w(x,t)>l_k\}|dt,
\end{equation*}
\begin{equation*}
|B^*(l_k,R_k)|=\int^{0}_{-\theta_0^{-\alpha}R_k^{2\sigma}}|\{(x,y)\in
B^*_{R_k};\, w^*(x,y,t)>l_k\}|dt
\end{equation*}
and
\begin{equation*}
\left|\tilde{B}^*\left(l_k,R_k,\frac{\delta^k}{4}\right)\right|=\int^{0}_{-\theta_0^{-\alpha}R_k^{2\sigma}}|\{(x,y)\in
\tilde{B}^*_{R_k,\frac{\delta^k}{4}};\, w^*(x,y,t)>l_k\}|dt.
\end{equation*}
{\bf  Step 3. Claim for the Induction:}
 We set
\begin{equation*}
w_k=(w-l_k)_+, \qquad w_k^{\ast}=(w^{\ast}-l_k)_+.
\end{equation*}
Note that $w_k^{\ast}\neq(w_k)^{\ast}$. We consider a cut-off
function $\eta_k (x,t)$ such that
\begin{equation}
\begin{aligned}
&\quad \qquad 0<\eta_k\leq 1 \qquad \qquad \mbox{in $Q_{R_k}(\theta_0)$}\\
&\quad \qquad \quad \eta_k=1 \qquad \qquad \quad \mbox{in $Q_{R_{k+1}}(\theta_0)$}\\
&\quad \qquad \quad \eta_k=0 \qquad \mbox{on the parabolic boundary
of
$Q_{R_k}(\theta_0)$}\\
&|\D \eta_k|\leq \frac{2^{k+2}}{R}, \qquad (\eta_k)_t\leq
\frac{2^{2\sigma(k+2)}\theta_0^{\alpha}}{R^{2\sigma}}\qquad
\theta_0=\frac{\omega}{A}.
\end{aligned}
\end{equation}
We will use the energy inequalities of Lemma
\eqref{lem-Loc-Ene-Est-1} written over the cylinders
$Q^{\ast}_{R_k}(\theta_0)$, for the function
$w^{\ast}_k=(w^{\ast}-l_k)_+$, where for $k=0,1,2,\cdots,$. Let
\begin{equation*}
Z_k=\theta_0^{\alpha}\,|B(l_k,R_k)|=\left(\frac{\omega}{A}\right)^{\alpha}\,|B(l_k,R_k)|,
\end{equation*}
then we are going to prove simultaneously that for every $k\geq 0$
\begin{equation}\label{ieq-z-k}
Z_k\leq N^{-k}
\end{equation}
for some constant $N>1$ and
\begin{equation}\label{eq-p-h-i2}
\eta_k w^*_k\quad\text{ is supported in $0\leq y\leq
\frac{\delta^k}{4}$}.
\end{equation}
{\bf Step 4. the contraction property of the support in $y$
direction:} We first want to show that $\eta_kv^*_k$ is supported in
$0\leq y\leq\frac{\delta^k}{4}$. By a comparison principle, we have:
\begin{equation*}
\begin{aligned}
\left(w^*-\left(\mu_+-\frac{\omega}{2}\right)\right)_+ \leq
\left[\left(w-\left(\mu_+-\frac{\omega}{2}\right)\right)_+\text{1}_{B_{R}}\right]\ast
P(y)+\frac{\omega}{2}h_{1,R}(x,y)
\end{aligned}
\end{equation*}
in $B_R^{\ast}\times\R^+$, where $P(y)$ is the Poisson kernel
introduced in Section 2.4 in \cite{CS}. Indeed, the right-hand side
function has the trace on the boundary is bigger than the one of
left-hand. Moreover:
\begin{equation*}
\begin{aligned}
&\left\|\left[\left(w-\left(\mu_+-\frac{\omega}{2}\right)\right)_+\text{1}_{B_{R}}\right]
\ast
P(y)\right\|_{L^{\infty}(y\geq \frac{R}4)} \\
&\qquad \qquad  \leq
C\Big\|P\Big(\frac{R}4\Big)\Big\|_{L^2}\left(\frac{\omega}{2}\right)\,
(\rho|Q_R(\theta_0)|)^{\frac{1}{2}}\leq C\sqrt{\rho}
\end{aligned}
\end{equation*}
Choosing $\rho$ small enough such that this constant is smaller that
$\frac{\lambda\omega}{2}$ gives:
\begin{equation*}
\left(w^*-\left(\mu_+-\frac{\omega}{2}\right)\right)_+ \leq
\frac{(1-\lambda)\omega}{2},
\end{equation*}
in $\{y>\frac{R}{4}\}\cap Q^*_{\frac{R}{2}}(\theta_0)$. Hence
\begin{equation*}
\left(w^*-\left(\mu_+-\frac{\lambda\omega}{2}\right)\right)_+ =0
\end{equation*}
in $\{y>\frac{R}{4}\}\cap Q^*_{\frac{R}{2}}(\theta_0)$. Since
$l_0=\mu_+-\frac{\lambda\omega}{2}$, we obtain that $\eta_0
w^*_0=\eta_1 (v^*-l_1)_-$ is supported in $0<y<\frac{\delta^0}{4}
R=\frac{R}{4}$ for $\delta_0=1$.\\
\indent Now we assume \eqref{eq-p-h-i2} is true at $k$th-step.  We
want to show that \eqref{eq-p-h-i2} is verified at $(k+1)$. First,
we will control $v^*_{k+1}$ in terms of $\eta_kv_k$ with some
controllable error, i.e., we will show also that the following is
verified at $k$:
\begin{equation}\label{quantity-level-1}
\eta_{k+1}w^{\ast}_{k+1}\leq \left[(\eta_kw_{k})\ast
P(z)\right]\eta_{k+1}, \qquad \qquad \mbox{on
$\tilde{B}^{\ast}_{R_k,\frac{\delta^k}{4}}$}
\end{equation}
where
$\tilde{B}^{\ast}_{R_k,\frac{\delta^k}{4}}=B_{R_k}\times(0,\frac{\delta^k}{4})$.
We consider now $h_2$ harmonic function defined by:
\begin{equation*}
\begin{aligned}
&\nabla\big(y^a\nabla h_2(z,y)\big)=0 \qquad \mbox{in $[0,\infty)\times[0,1]$}\\
&\qquad h_2(0,y)=1 \qquad \qquad \qquad 0\leq y\leq 1\\
&h_2(z,0)=h_2(z,1)=0 \qquad \mbox{for $0<x<\infty$}.
\end{aligned}
\end{equation*}
Then, there exists $C>0$ such that
\begin{equation*}
|h_2(z)|\leq Ce^{-\frac{z}{2}}.
\end{equation*}
Indeed, we can see that
\begin{equation*}
\begin{aligned}
&\quad h_2(z,y)\leq 2\sqrt{2}\cos
\left(\frac{y}{2}\right)e^{-\frac{z}{2}}\qquad\qquad (a\geq 0)\\
&h_2(z,y)\leq 4\sqrt{2}\sin
\left(\frac{y}{2}+\frac{\pi}{6}\right)e^{-\frac{z}{2}}\qquad\qquad
(a<0),
\end{aligned}
\end{equation*}
since this function is super-harmonic and bigger than $h_2$ on the
boundary. Consider
$B_{\frac{R_0}{2}+\frac{R_0}{2^{k+1+\frac{1}{2}}}}\times[0,\frac{\delta^k}{4}]$.
On $y=\frac{\delta^k}{4}$ we have no contribution thanks to the
induction property \eqref{eq-p-h-i2}. The contribution of the side
$y=0$ can be controlled by $(\eta_k w_k)\ast P(y)$. On each of the
other side, we control the contribution by the function of
\begin{equation*}
\frac{\omega}{2}\left[h_2\left(\frac{4(x_i-\vartheta^+)}{\delta^k},\frac{4y}{\delta^k}\right)+h_2\left(\frac{-4(x_i+\vartheta^+)}{\delta^k},\frac{4y}{\delta^k}\right)\right]
\end{equation*}
where $\vartheta^+=\frac{R_0}{2}+\frac{R_0}{2^{k+1+\frac{1}{2}}}$.
Since $h_2$ is super-solution and on the side $x_i=\vartheta^+$ and
$x_i=-\vartheta^+$ it is bigger than $1$, we have, by the maximum
principle:
\begin{equation*}
\begin{aligned}
&w^{\ast}_k\leq (\eta_kw_k)\ast P(y)\\
&\quad
+\frac{\omega}{2}\sum_{i=1}^{n}\left[h_2\left(\frac{4(x_i-\vartheta^+)}{\delta^k},\frac{4y}{\delta^k}\right)+h_2\left(\frac{-4(x_i+\vartheta^+)}{\delta^k},\frac{4y}{\delta^k}\right)\right].
\end{aligned}
\end{equation*}
For $x\in B_{R_{k+1}}$,
\begin{equation*}
\begin{aligned}
&\frac{\omega}{2}\sum_{i=1}^{n}\left[h_2\left(\frac{4(x_i-\vartheta^+)}{\delta^k},\frac{4y}{\delta^k}\right)+h_2\left(\frac{-4(x_i+\vartheta^+)}{\delta^k},\frac{4y}{\delta^k}\right)\right]\\
&\qquad \qquad \qquad \leq 2nC\omega
e^{-\frac{\sqrt{2}-1}{2^{k+3}\delta^k}} \leq \lambda 2^{-k-4}.
\end{aligned}
\end{equation*}
for some small constant $\delta$ and $\lambda$. This gives
(\ref{quantity-level-1}) since
\begin{equation*}
w^{\ast}_{k+1} \leq \left(w^{\ast}_k-\lambda2^{-k-3}\right)_+.
\end{equation*}
More precisely, this gives
\begin{equation*}
w^{\ast}_{k+1} \leq \left((\eta_kw_k)\ast
P(y)-\lambda2^{-k-4}\right)_+.
\end{equation*}
So
\begin{equation*}
\eta_{k+1}w^{\ast}_{k+1} \leq \left((\eta_kw_k)\ast
P(y)-\lambda2^{-k-4}\right)_+
\end{equation*}
Then, we find for $\frac{\delta^{k+1}}{4}\leq y \leq
\frac{\delta^k}{4}$,
\begin{equation*}
\begin{aligned}
|(\eta_kw_k)\ast P(y)| &\leq \sqrt{Z_k}\|P(y)\|_{L^2}\\
&\leq
\frac{\sqrt{\frac{\omega}{2}}N^{-\frac{k}{2}}}{\left(\frac{\delta}{4}\right)^{\frac{(k+1)n}{2}}}\|P(1)\|_{L^2}\leq
\lambda 2^{-k-4}
\end{aligned}
\end{equation*}
for large
$N>\sup\left(\frac{4^{n+1}}{\delta^{2}},\frac{2^{4n+9}\omega\|P(1)\|^2_{L^2}}{\lambda^2\delta^{2n}}\right)$.
Therefore,
\begin{equation*}
\eta_{k+1}w^{\ast}_{k+1} \leq 0 \qquad \qquad \mbox{for}\quad
\frac{\delta^{k+1}}{4}\leq y \leq \frac{\delta^k}{4}.
\end{equation*}
{\bf Step 5. Local Energy Estimate} First, we find the lower bound
of the following quantity
\begin{equation*}
D=\int_{B_r\times\{t_2\}}\eta^2\left[\int_0^{(w-k)_+}\left(M-k-\xi\right)^{\frac{1}{m}-1}\xi\,\,d\xi\right]dx.
\end{equation*}
Let's define the function $F(\xi)$ by
\begin{equation*}
F(\xi)=\left(M-k-\xi\right)^{\frac{1}{m}-1}\xi=\left(M-k-\xi\right)^{-\alpha}\xi
\qquad (0\leq \xi \leq M-k).
\end{equation*}
Then, we have
\begin{equation*}
\begin{aligned}
&\qquad F'(\xi)=\frac{1}{m}\left(M-k-\xi\right)^{-\alpha-1}\left(m(M-k)-\xi\right),\\
&F''(\xi)=-\frac{1}{m}\left(\frac{1}{m}-1\right)\left(M-k-\xi\right)^{-\alpha-2}\left(2m(M-k)-\xi\right).
\end{aligned}
\end{equation*}
Note that the sign change of second derivatives of $F(\xi)$ takes
place at $2m(M-k)$. If $(w-k)_+$ is smaller than $2m(M-k)$, we get
\begin{equation*}
\int_0^{(w-k)_+}F(\xi)\,\,d\xi \geq
\frac{1}{2}(w-k)_+F\left(\frac{(w-k)_+}{2}\right)
\end{equation*}
and
\begin{equation*}
D\geq
\frac{1}{4}\left(\frac{M-k}{2}\right)^{-\alpha}\int_{B_r\times\{t_2\}}\left[\eta(w-k)_+\right]^2\,\,dx.
\end{equation*}
since $F''(\xi)\leq 0$, $(0\leq \xi \leq (w-k)_+)$. On the other
hand,
\begin{equation*}
\begin{aligned}
\int_0^{(w-k)_+}F(\xi)\,\,d\xi &\geq \frac{1}{2}\cdot
2m(M-k)F(2m(M-k))\\
&=2m^2(M-k)^2\left[(1-2m)(M-k)\right]^{-\alpha}\\
&\qquad \geq 2m^2(w-k)_+^2\left[(1-2m)(M-k)\right]^{-\alpha}
\end{aligned}
\end{equation*}
when $(w-k)_+\geq 2m(M-k)$. Thus we obtain
\begin{equation*}
D\geq
2\left[(1-2m)(M-k)\right]^{-\alpha}\int_{B_r\times\{t_2\}}\left[\eta(w-k)_+\right]^2\,\,dx.
\end{equation*}
Therefore, there is a small constant $c>0$ such that
\begin{equation*}
\begin{aligned}
&c(M-k)^{-\alpha}\int_{B_r\times\{t_2\}}\left[\eta(w-k)_+\right]^2\,\,dx\\
&\qquad \quad \leq
\int_{B_r\times\{t_2\}}\eta^2\left[\int_0^{(w-k)_+}\left(M-k-\xi\right)^{\frac{1}{m}-1}\xi\,\,d\xi\right]dx.
\end{aligned}
\end{equation*}
Next, notice that from Step $4$, then we have
\begin{equation}\label{eq-convolution-1}
\begin{aligned}
&\|\eta_{k}\,w^*_{k}\|^2_{L^2(B^*_{R_k},|y|^a)}\leq
\|(\eta_{k-1}w_{k-1})*P(y)\|^2_{L^2(B^*_{R_{k-1}},|y|^a)}\\
&\qquad \qquad \leq \int_0^{\delta^{k-1}/4}\|(\eta_{k-1}w_{k-1})*(P(y))\|^2_{L^2(B_{R_{k-1}})}y^a\,dy\\
&\qquad \qquad \leq \|P(1)\|^2_{L^1(\R^n)}\|(\eta_{k-1}w_{k-1})\|^2_{L^2(B_{R_{k-1}})}\int_0^{\delta^{k-1}/4}y^a\,dy\\
&\qquad \qquad \leq
\frac{1}{1+a}\left(\frac{\delta^{k-1}}{4}\right)^{a+1}\|P(1)\|^2_{L^1(B_{R_{k-1}})}\|\eta_{k-1}w_{k-1}\|^2_{L^2(B_{R_{k-1}})}\\
&\qquad \qquad \leq \frac{1}{1+a}
\left(\frac{\delta^{k-1}}{4}\right)^{a+1}\frac{(\lambda\omega)^2}{4}|A_{l_{k-1},R_{k-1}}(t)|.
\end{aligned}
\end{equation}
where $A_{l,R}(t)= \{x\in B_R;\, w(x,t)>l \}$. We can apply the
Lemma \ref{lem-Loc-Ene-Est-1} (Local Energy Estimate) on
$\eta_kw^{\ast}_k\text{1}_{\{0<y<\frac{\delta^k}{4}\}}$
\begin{equation}
\begin{aligned}
&c\left(\frac{\lambda\omega}{4}\right)^{-\alpha}\sup_{-\theta_0^{-\alpha}R_k^{2\sigma}<t<0}\|\eta_kw_k\|^2_{L^2(B_{R_k})}\\
&\qquad \quad +\|\nabla(\eta_k w^{\ast}_k)\|^2_{L^2(Q^{\ast}_{R_k}(\theta_0),|y|^{a})}\\
&\qquad \qquad \qquad  \leq \frac{(\delta^{k-1})^{a+1}4^{k-a}(\lambda\omega)^2}{(1+a)R^2}\int^{0}_{-\theta_0^{-\alpha}R_k^{2\sigma}}|A_{l_{k-1},R_{k-1}}(t)|dt\\
&\qquad \qquad \qquad \qquad
+\frac{4^{\sigma(k+2)}\theta_0^{\alpha}(\lambda\omega)^{2}}{2mM^{\alpha}R^{2\sigma}}\int^{0}_{-\theta_0^{-\alpha}R_k^{2\sigma}}|A_{l_k,R_k}(t)|dt.
\end{aligned}
\end{equation}
From this, it follows that
\begin{equation}
\begin{aligned}
&c\left(\frac{4}{A\lambda}\right)^{\alpha}\sup_{-\theta_0^{-\alpha}R_k^{2\sigma}<t<0}
\|\eta_kw_k\|^2_{L^2(B_{R_k})}\\
&\qquad \quad +\theta_0^{\alpha}\|\D (\eta_k w^*_k)\|^2_{L^2(Q^*_{R_k}(\theta_0),|y|^{a})}\\
&\qquad \qquad \qquad \leq (\lambda\omega)^{2}\left(\frac{
(\delta^{k-1})^{a+1}4^{k-a}}{(1+a)R^2}+\frac{4^{\sigma(k+2)}\theta_0^{\alpha}}{2mM^{\alpha}R^{2\sigma}}\right)Z_{k-1}.
\end{aligned}
\end{equation}
Now let us make a change of variable
\begin{equation*}
\tau=\left(\frac{\omega}{A}\right)^{\alpha}t=\theta_0^{\alpha}t.
\end{equation*}
Then, $Q_R(\theta_0)$, $Q^*_R(\theta_0)$ and
$\tilde{Q}^*_R(\theta_0)$ will be transformed to $Q_R(1)$,
$Q^*_R(1)$ and $\tilde{Q}^*_R(1)$ respectively. Let
$\overline{w}(x,\tau)=w(x,\theta_0^{-\alpha}\tau)$ and
$\overline{w}^*(x,y,\tau)=w^*(x,y,\theta_0^{-\alpha}\tau)$. We also
define the quantity $\overline{Z}_k$ to be
\begin{equation*}
\overline{Z}_k=\left|\{(x,t)\in Q_{R_k}(1):
\overline{w}>l_k\}\right|.
\end{equation*}
Then, $\overline{Z}_k$ will be equal to $Z_k$. After the change of
variable, we have
\begin{equation*}
\begin{aligned}
&c\left(\frac{4}{A\lambda}\right)^{\alpha}\sup_{-R_k^{2\sigma}<\tau<0}
\|\eta_k\overline{w}_k\|^2_{L^2(B_{R_k})}
\\
&\qquad \quad +\|\D(\eta_k\overline{w}^*_k)\|^2_{L^2(Q^*_{R_k}(1),|y|^{a})}\\
&\qquad \qquad \qquad \leq (\lambda\omega)^{2}\left(\frac{
(\delta^{k-1})^{a+1}4^{k-a}}{(1+a)R^2}+\frac{4^{\sigma(k+2)}\theta_0^{\alpha}}{2mM^{\alpha}R^{2\sigma}}\right)\overline{Z}_{k-1}.
\end{aligned}
\end{equation*}
Let's choose a small constant $A$ such that
\begin{equation*}
c\left(\frac{4}{A\lambda}\right)^{\alpha}>1.
\end{equation*}
Then
\begin{equation}\label{ineq-local-energy-esti}
\begin{aligned}
&\sup_{-R_k^{2\sigma}<\tau<0}
\|\eta_k\overline{w}_k\|^2_{L^2(B_{R_k})}+\|\D(\eta_k\overline{w}^*_k)\|^2_{L^2(Q^*_{R_k}(1),|y|^{a})}\\
&\qquad \qquad \qquad \leq (\lambda\omega)^{2}\left(\frac{
(\delta^{k-1})^{a+1}4^{k-a}}{(1+a)R^2}+\frac{4^{\sigma(k+2)}\theta_0^{\alpha}}{2mM^{\alpha}R^{2\sigma}}\right)\overline{Z}_{k-1}.
\end{aligned}
\end{equation}
Since
$\eta_k\overline{w}^{\ast}_k\text{1}_{\{0<y<\frac{\delta^{k-1}}{4}\}}$
has the same trace at $y=0$ as $(\eta_k\overline{w}_k)^{\ast}$, we
have
\begin{equation*}
\begin{aligned}
C\int_0^{\frac{\delta^{k-1}}{4}}\int_{\R^n}|\nabla(\eta_k\overline{w}^{\ast}_k)|^2y^a\,dxdy&=\int_0^{\infty}\int_{\R^n}|\nabla(\eta_k\overline{w}^{\ast}_k\text{1}_{\{0<y<\frac{\delta^{k-1}}{4}\}})|^2y^a\,dxdy\\
&\geq
\int_0^{\infty}\int_{\R^n}|\nabla(\eta_k\overline{w}_k)^{\ast}|^2y^a\,dxdy.
\end{aligned}
\end{equation*}
We also have
\begin{equation*}
\begin{aligned}
\int_{Q_{R_k}(1)} |\eta_k \overline{w}_k|^2\, dx\,d\tau
&\geq(l_{k+1}-l_k)^2\int_{-R_k^{2\sigma}}^0|\{(x,t)\in
Q_{R_{k+1}}(1):\overline{w}>l_{k+1}\}|\,d\tau\\
&=\left(\frac{\lambda\omega}{2^{k+3}}\right)^2\overline{Z}_{k+1}.
\end{aligned}
\end{equation*}
Hence, combining above estimates with Sobolev inequalities (Lemma
\ref{eq-Sobolev-Inequality}), the inequality
\eqref{ineq-local-energy-esti} changes into
\begin{equation*}
\begin{aligned}
&\left(\frac{\lambda\,
\omega}{2^{k+3}}\right)^2\overline{Z}_{k+1}\leq \|\eta_k\overline{w}_k\|^2_{L^2(Q_{R_k}(1))}\\
&\quad \leq
C\left[\sup_{-R_k^{2\sigma}\leq
t\leq 0}\|\eta_{k}\,
w_{k}\|^2_{L^{2}(\R^n)}+\|\D(\eta_k\overline{w}^*_k)\|^2_{L^2(Q^*_{R_k}(1),|y|^{a})}\right]\,
\overline{Z}_{k-1}^{\frac{2\sigma}{n+2\sigma}}\\
&\quad \leq C(\lambda\omega)^{2}\left(\frac{
(\delta^{k-1})^{a+1}4^{k-a}}{(1+a)R^2}+\frac{4^{\sigma(k+2)}\theta_0^{\alpha}}{2mM^{\alpha}R^{2\sigma}}\right)\,\overline{Z}_{k-1}^{1+\frac{2\sigma}{n+2\sigma}}
\end{aligned}
\end{equation*}
for some constant $C>0$. Since $0<\sigma,\,\delta<1$, we have
\begin{equation*}
\overline{Z}_{k+1}\leq
C4^{2k}\left(\frac{4^{3-a}}{(1+a)R^2}+\frac{4^{5}\theta_0^{\alpha}}{2mM^{\alpha}R^{2\sigma}}\right)\,\overline{Z}_{k-1}^{1+\frac{2\sigma}{n+2\sigma}}=C'4^{2k}\overline{Z}_{k-1}^{1+\frac{2\sigma}{n+2\sigma}}.
\end{equation*}
Let's choose the constant $N$ to satisfy
\begin{equation*}
N>\sup\left(1, C',
16^{\frac{n+2\sigma}{\sigma}},\frac{4^{n+1}}{\delta^{2}},\frac{2^{4n+9}\omega\|P(1)\|^2_{L^2}}{\lambda^2\delta^{2n}}\right).
\end{equation*}
Then
\begin{equation*}
\left(\frac{N}{16^{\frac{n+2\sigma}{2\sigma}}}\right)^{\frac{2\sigma
k}{n+2\sigma}}\geq N^{\frac{\sigma k}{n+2\sigma}}\geq N^4\geq
C'N^{2\left(1+\frac{2\sigma}{n+2\sigma}\right)}
\end{equation*}
for $k\geq\frac{4(n+2\sigma)}{\sigma}$ and this is equivalent to
\begin{equation*}
N^{-k}\geq
C'4^{2k}N^{-\left(1+\frac{2\sigma}{n+2\sigma}\right)(k-2)}.
\end{equation*}
If we take the constant $\rho$ so sufficiently small that
\begin{equation*}
\overline{Z}_{\overline{k}}\leq N^{-\overline{k}} \qquad \left(1\leq
\overline{k}<\frac{4(n+2\sigma)}{\sigma}+1\right),
\end{equation*}
then \eqref{ieq-z-k} is true for all $k\geq 0$.
\end{proof}
We next assume that the assumption of Lemma (\ref{lem-Hoilder-1})
are violated, i.e., for every sub-cylinder $Q_{R}(\theta_0)$
\begin{equation*}
\left|\left\{(x,t)\in Q_R(\theta_0):
w(x,t)>\mu^+-\frac{\omega}{2}\right\}\right|>\rho\left|Q_{R}(\theta_0)\right|.
\end{equation*}
Since
\begin{equation*}
\mu^+-\frac{\omega}{2}\geq \mu^-+\frac{\omega}{2^{s_0}}, \qquad
\forall s_0\geq 2,
\end{equation*}
we rewrite this as
\begin{equation}\label{eq-measure-mu-}
\left|\left\{(x,t)\in Q_R(\theta_0):
w(x,t)\leq\mu^-+\frac{\omega}{2^{s_0}}\right\}\right|\leq(1-\rho)\left|Q_{R}(\theta_0)\right|
\end{equation}
valid for all cylinders $Q_R(\theta_0)$.
\begin{lemma}\label{lem-violate-1}
\item If \eqref{eq-p-harnack-cond} is violated, then there exists a
time level
\begin{equation*}
t^{\ast}\in
\left[-\theta_0^{-\alpha}R^{2\sigma},-\frac{\rho}{2}\theta_0^{-\alpha}R^{2\sigma}\right]
\end{equation*}
such that
\begin{equation*}
\left|\{x\in B_R;\, w(x,t)\leq
\mu^-+\frac{\omega}{2^{s_0}}\}\right|\,\leq
\frac{1-\rho}{1-\frac{\rho}{2}}\,|B_R|.
\end{equation*}
\end{lemma}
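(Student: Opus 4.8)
The plan is to prove this by a contradiction argument built on a one–dimensional averaging (pigeonhole) estimate in the time variable, the input being the measure bound \eqref{eq-measure-mu-} that follows from the failure of \eqref{eq-p-harnack-cond}. (I read the statement as asserting the bound on the slice $w(x,t^{\ast})$; the bare $t$ in the displayed set is a typographical slip for $t^{\ast}$.)

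First I would record the bookkeeping. Since $Q_R(\theta_0)=B_R\times(-\theta_0^{-\alpha}R^{2\sigma},0)$, we have $|Q_R(\theta_0)|=|B_R|\,\theta_0^{-\alpha}R^{2\sigma}$, and writing $A(t)=\{x\in B_R:\ w(x,t)\le \mu^-+\omega 2^{-s_0}\}$, Fubini turns \eqref{eq-measure-mu-} into
\[
\int_{-\theta_0^{-\alpha}R^{2\sigma}}^{0}|A(t)|\,dt\ \le\ (1-\rho)\,|B_R|\,\theta_0^{-\alpha}R^{2\sigma}.
\]
Next I would assume, for contradiction, that the conclusion fails, i.e. that $|A(t)|>\frac{1-\rho}{1-\rho/2}|B_R|$ for \emph{every} $t$ in the sub-interval $I:=[-\theta_0^{-\alpha}R^{2\sigma},-\frac{\rho}{2}\theta_0^{-\alpha}R^{2\sigma}]$, whose length is $(1-\frac{\rho}{2})\theta_0^{-\alpha}R^{2\sigma}$. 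Since $I$ is contained in $(-\theta_0^{-\alpha}R^{2\sigma},0)$, integrating this strict inequality over $I$ gives
\[
\int_{-\theta_0^{-\alpha}R^{2\sigma}}^{0}|A(t)|\,dt\ \ge\ \int_I |A(t)|\,dt\ >\ \frac{1-\rho}{1-\rho/2}\,|B_R|\cdot\Bigl(1-\frac{\rho}{2}\Bigr)\theta_0^{-\alpha}R^{2\sigma}\ =\ (1-\rho)\,|B_R|\,\theta_0^{-\alpha}R^{2\sigma},
\]
which contradicts the displayed consequence of \eqref{eq-measure-mu-}. Hence the set of $t\in I$ for which $|A(t)|\le \frac{1-\rho}{1-\rho/2}|B_R|$ is nonempty, and any such $t^{\ast}$ proves the lemma.

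There is no serious obstacle here; the only point requiring care is the arithmetic of the interval lengths, which is exactly what forces the particular right endpoint $-\frac{\rho}{2}\theta_0^{-\alpha}R^{2\sigma}$ of $I$: it is chosen so that $\frac{1-\rho}{1-\rho/2}$ times the length of $I$ collapses to precisely $(1-\rho)\theta_0^{-\alpha}R^{2\sigma}$, making the contradiction with \eqref{eq-measure-mu-} exact. The discarded time slab of length $\frac{\rho}{2}\theta_0^{-\alpha}R^{2\sigma}$ adjacent to $t=0$ plays no role and is simply thrown away.
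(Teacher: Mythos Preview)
Your argument is correct and is essentially identical to the paper's own proof: both assume the slice bound fails on the entire sub-interval $I=[-\theta_0^{-\alpha}R^{2\sigma},-\frac{\rho}{2}\theta_0^{-\alpha}R^{2\sigma}]$, integrate over $I$, and observe that the product $\frac{1-\rho}{1-\rho/2}\cdot(1-\frac{\rho}{2})=1-\rho$ contradicts \eqref{eq-measure-mu-}. Your write-up is in fact slightly more careful with the arithmetic than the paper's (which has minor typos in the exponents and in writing $|Q^{\ast}_R(\theta_0)|$ for $|Q_R(\theta_0)|$).
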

\begin{proof}
If not, for all $t\in
\left[-\theta_0^{-\alpha}R^{2\sigma},-\frac{\rho}{2}\theta_0^{-\alpha}R^{2\sigma}\right]$,
\begin{equation*}
\left|\left\{x\in B_R;\,
w(x,t)\leq\mu^-+\frac{\omega}{2^{s_0}}\right\}\right|>
\frac{1-\rho}{1-\frac{\rho}{2}}\,|B_R|
\end{equation*}
and
\begin{equation*}
\begin{aligned}
&\left|\left\{(x,t)\in Q_R(\theta_0):w(x,t)\leq\mu^-+\frac{\omega}{2^{s_0}}\right\}\right| \\
&\qquad \geq
\int_{-\theta_0^{\alpha}R^{2\sigma}}^{-\frac{\rho}{2}\theta_0^{\alpha}R^{2\sigma}}\left|\left\{x\in
B_R;\, w(x,\tau)\leq\mu^-+\frac{\omega}{2^{s_0}}\right\}\right|\,d\tau\\
&\qquad >(1-\rho)|Q^{\ast}_R(\theta_0)|,
\end{aligned}
\end{equation*}
contradicting (\ref{eq-measure-mu-}).
\end{proof}
The Lemma asserts that at some time level $t^{\ast}$ the set where
$w$ is close to its supremum occupies only a portion of the $B_{R}$.
The next Lemma claims that this indeed occurs for all time levels
near the $Q_R(\theta_0)$. Set
\begin{equation*}
H=\sup_{B^{\ast}_R\times[t^{\ast},0]}\left|\left(w^{\ast}-\left(\mu^-+\frac{\omega}{2^{s_0}}\right)\right)_-\right|.
\end{equation*}
\begin{lemma}\label{lem-w-mu--}
There exists a positive integer $s_1>s_0$ such that if
\begin{equation}\label{eq-H=sup}
H>\frac{\omega}{2^{s_1}},
\end{equation}
then
\begin{equation*}
\left|\left\{x\in B_R;\, w(x,t)\leq
\mu^-+\frac{\omega}{2^{s_1}}\right\}\right|\,\leq
\left(1-\left(\frac{\rho}{2}\right)^2\right)\,|B_R|,
\end{equation*}
for all $t\in[t^{\ast},0]$.
\end{lemma}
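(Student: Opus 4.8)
\emph{Proof plan.} The plan is to upgrade the single--time--slice estimate of Lemma~\ref{lem-violate-1} to a uniform estimate on the whole interval $[t^\ast,0]$ by means of a logarithmic energy inequality for $w^\ast$, in the spirit of De~Giorgi--DiBenedetto. I would fix the level $k_0=\mu^-+\omega/2^{s_0}$ and the parameter $c=2^{\,s_0-s_1}H$, put
\[
\psi(s)=\Big(\ln\frac{H}{H-s+c}\Big)_+,\qquad 0\le s\le H ,
\]
and test the extension equation $\D(y^a\D w^\ast)=0$ against $\eta^2\widetilde g(w^\ast)$, where $\widetilde g(w^\ast)=-2m\,(M-k_0+s)^{\alpha}\psi(s)\psi'(s)$ with $s=(w^\ast-k_0)_-$ (and $\psi'=d\psi/ds$), and $\eta=\eta(x,y)$ is a time--independent cut--off, $\eta\equiv1$ on $B_{(1-\e)R}\times(0,(1-\e)R)$, supported in $B^\ast_R$, $|\D\eta|\le C(\e R)^{-1}$. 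The weight $(M-k_0+s)^{\alpha}$ is inserted so that, through the boundary condition in \eqref{eq-Hoilder(M-v)}, the trace contribution at $\{y=0\}$ is exactly $-\tfrac{d}{dt}\int_{B_R}\eta^2\psi^2\big((w-k_0)_-\big)dx$; integrating by parts in $y>0$, absorbing the cross term by Young's inequality (using $\widetilde g'>0$, checked below), and integrating in $t$ one gets, exactly as in the proof of Lemma~\ref{lem-Loc-Ene-Est-1},
\[
\int_{B_R\times\{\tau\}}\!\!\eta^2\psi^2\,dx\ \le\ \int_{B_R\times\{t^\ast\}}\!\!\eta^2\psi^2\,dx\ +\ 2\int_{t^\ast}^{0}\!\!\int_{B^\ast_R} y^a\,\frac{\widetilde g^2}{\widetilde g'}\,|\D\eta|^2\,dx\,dy\,dt ,\qquad \tau\in[t^\ast,0],
\]
with constant $1$ in front of the datum at $t^\ast$ (the whole effect of the cross term is pushed into the last integral).

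Next I would estimate the two right--hand terms. Since $(w-k_0)_-\le H\le\omega/2^{s_0}$ we have $\psi\le(s_1-s_0)\ln2$, and $\psi(\cdot,t^\ast)$ is supported on $\{x\in B_R:w(x,t^\ast)<k_0\}$, of measure $\le\tfrac{1-\rho}{1-\rho/2}|B_R|$ by Lemma~\ref{lem-violate-1}; hence the first term is $\le(s_1-s_0)^2(\ln2)^2\tfrac{1-\rho}{1-\rho/2}|B_R|$. For the remainder, from $\psi''=(\psi')^2$ one gets $\widetilde g'=2m(M-k_0+s)^{\alpha-1}\psi'\big[\alpha\psi+(M-k_0+s)\psi'(1+\psi)\big]$, and $\tfrac{M-k_0+s}{H-s+c}\ge 2^{\,s_0-2}$ (because $M-k_0\ge\omega/2$ and $H-s+c\le2H\le\omega/2^{s_0-1}$). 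Thus, if $s_0$ is chosen large enough in terms of $m$ that $2^{\,s_0-2}\ge2|\alpha|$, the bracket is $\ge\tfrac12(M-k_0+s)\psi'(1+\psi)$, so $\widetilde g'>0$ and $\widetilde g^2/\widetilde g'\le4m(M-k_0+s)^{\alpha}\psi$. Combined with $(M-k_0+s)^{\alpha}\le(\omega/2)^{\alpha}$, $|t^\ast|\le\theta_0^{-\alpha}R^{2\sigma}$, $\int_0^R y^a\,dy=\tfrac{R^{1+a}}{1+a}$, the scaling identity $2\sigma+a-1=0$, and the cancellation $(\omega/2)^{\alpha}\theta_0^{-\alpha}=(2/A)^{-\alpha}$ (a constant of the data), the remainder term is $\le C\e^{-2}(s_1-s_0)|B_R|$ with $C$ depending only on the data.

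Finally, if $x\in B_{(1-\e)R}$ has $w(x,\tau)\le\mu^-+\omega/2^{s_1}$ then $(w-k_0)_-\ge\omega/2^{s_0}-\omega/2^{s_1}\ge(1-2^{\,s_0-s_1})H$, so $H-(w-k_0)_-+c\le2^{\,s_0-s_1+1}H$ and $\psi\ge(s_1-s_0-1)\ln2$. Inserting the three bounds into the logarithmic inequality yields
\[
\big|\{x\in B_{(1-\e)R}:w(x,\tau)\le\mu^-+\omega/2^{s_1}\}\big|\ \le\ \Big(\frac{(s_1-s_0)^2}{(s_1-s_0-1)^2}\cdot\frac{1-\rho}{1-\rho/2}+\frac{C}{\e^2(s_1-s_0-1)^2}\Big)|B_R| .
\]
Since $\big(1-(\rho/2)^2\big)-\tfrac{1-\rho}{1-\rho/2}$ is a strictly positive number depending only on $\rho$, one first chooses $\e=\e(n,\rho)$ so that $|B_R\setminus B_{(1-\e)R}|$ is at most half of it, and then $s_1>s_0$ large enough (in terms of $\rho,\e$ and the data) that the parenthesis is at most $\tfrac{1-\rho}{1-\rho/2}$ plus a quarter of it; adding the two gives the claim on $B_R$ for every $\tau\in[t^\ast,0]$. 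The hypothesis $H>\omega/2^{s_1}$ only rules out the trivial case: if $H\le\omega/2^{s_1}$, then since $s_1>s_0$ one has $(w-k_0)_-\le H<\omega/2^{s_0}-\omega/2^{s_1}$ throughout $B^\ast_R\times[t^\ast,0]$, so the set in question is already empty at each time level. The one genuinely non--routine point is securing $\widetilde g'>0$ in spite of $\alpha=1-\tfrac1m<0$: this is what forces $s_0$ (hence the level $k_0$) to be taken large in terms of the data, which is harmless since all the preceding lemmas hold for every $s_0\ge2$; a minor technical point is the vanishing of the lateral and top boundary integrals over $\partial B^\ast_R$, which the cut--off $\eta$ takes care of (alternatively, with $\eta=\eta(x)$, the decay of $v^\ast=(M-w)^\ast$ in $y$, since $v=M-w$ is supported in $\overline\Omega$).
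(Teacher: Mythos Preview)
Your proposal is correct and follows essentially the same route as the paper: both arguments test the extension equation against a logarithmic function of DiBenedetto type, weighted by $(M-w^\ast)^\alpha$ so that the trace at $y=0$ produces exactly $-\tfrac{d}{dt}\int\eta^2\psi^2$, derive the same logarithmic energy inequality, and then choose $\e$ (the paper's $\nu$) and $s_1$ at the end. The only cosmetic differences are your choice $c=2^{\,s_0-s_1}H$ versus the paper's $c=\omega/2^{s_1}$, and your packaging of the absorption step through $\widetilde g^2/\widetilde g'$ rather than the paper's direct expansion---both yield the same condition that $s_0$ be large in terms of $|\alpha|$ (your $2^{\,s_0-2}\ge2|\alpha|$ is the counterpart of the paper's $-\alpha/e+1\le2^{s_0}$).
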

\begin{proof}
We introduce the logarithmic function which appears in Section 2 in
\cite{YD1}
\begin{equation*}
\Psi(H,(w^{\ast}-k)_-,c)\equiv \max\left\{{\ln}
\left(\frac{H}{H-(w^{\ast}-k)_-+c}\right) ; 0\right\}
\end{equation*}
for $k=\mu^-+\frac{\omega}{2^{s_0}}$, $c=\frac{\omega}{2^{s_1}}$.
From the definition and the indicated choices we have
\begin{equation*}
\begin{aligned}
&\Psi(H,(w^{\ast}-k)_-,c)\leq (s_1-s_0)\ln 2\\
&|\Psi_{w^{\ast}}(H,(w^{\ast}-k)_-,c)|^2\leq
\left(\frac{2^{s_1}}{\omega}\right)^{2}.
\end{aligned}
\end{equation*}
To simplify the symbolism let us set
\begin{equation*}
\Psi(H,(w^{\ast}-k)_-,c)=\vp(w^{\ast}).
\end{equation*}
We apply to \eqref{eq-Hoilder(M-v)} the testing function
\begin{equation*}
(M-w^{\ast})^{\alpha}\frac{\partial}{\partial
w^{\ast}}\left[\vp^2(w^{\ast})\right]\zeta^2=(M-w^{\ast})^{\alpha}\left[\vp^2(w^{\ast})\right]'\zeta^2
\end{equation*}
where $\zeta(x,z)$ is the smooth cut-off function such that
\begin{equation*}
\begin{aligned}
&\zeta=1 \qquad \mbox{in
$B_{(1-\nu)R}\times(-(1-\nu)R,(1-\nu)R)$},\\
&\zeta=0 \qquad \mbox{on $\partial\{B_R\times(-R,R)\}$}
\end{aligned}
\end{equation*}
and
\begin{equation*}
|D\zeta|\leq \frac{2}{\nu R}.
\end{equation*}
Then, we have for every $t^{\ast}<t<t_0$
\begin{equation*}
\begin{aligned}
0&=-\int_{B_R^{\ast}}y^a
\nabla\left[(M-w^{\ast})^{\alpha}(\vp^2)'\zeta^2\right]\cdot\nabla
w^{\ast}\,\,dxdy\\
&=2\alpha\int_{B_R^{\ast}}y^a\vp\vp'\zeta^2(M-w^{\ast})^{\alpha-1}|\nabla
w^{\ast}|^2\,\,dxdy-4\int_{B_R^{\ast}}y^a\vp\vp'(M-w^{\ast})^{\alpha}\zeta\nabla\zeta\cdot\nabla
w^{\ast}\,\,dxdy\\
&\qquad
-2\int_{B_R^{\ast}}y^a(M-w^{\ast})^{\alpha}\zeta^2(1+\vp)(\vp')^2|\nabla
w^{\ast}|^2\,\,dxdy\\
&\qquad
+\int_{B_R}(M-w)^{\alpha}\zeta^2(\vp(w)^2)'(-\La)^{\sigma}w\,\,dx\\
&\leq
2\alpha\int_{B_R^{\ast}}y^a\vp\vp'\zeta^2(M-w^{\ast})^{\alpha-1}|\nabla
w^{\ast}|^2\,\,dxdy+2\int_{B^{\ast}_R}y^a(M-w^{\ast})^{\alpha}\vp|\nabla
\zeta|^2 \,\,dxdy\\
&\qquad
-2\int_{B_R^{\ast}}y^a(M-w^{\ast})^{\alpha}\zeta^2(\vp')^2|\nabla
w^{\ast}|^2\,\,dxdy-\frac{1}{m}\int_{B_R}\zeta^2(\vp(w)^2)'w_t\,\,dx.
\end{aligned}
\end{equation*}
Since $\vp$ vanishes on the set where $(w^{\ast}-k)_-=0$, we have
\begin{equation*}
\alpha\int_{B_R^{\ast}}y^a\vp\vp'\zeta^2(M-w^{\ast})^{\alpha-1}|\nabla
w^{\ast}|^2\,\,dxdy \leq
\int_{B_R^{\ast}}y^a(M-w^{\ast})^{\alpha}\zeta^2(\vp')^2|\nabla
w^{\ast}|^2\,\,dxdy
\end{equation*}
when $-\frac{\alpha}{e}+1\leq 2^{s_0}$. Hence
\begin{equation*}
\begin{aligned}
&\sup_{t^{\ast}<t<0}\int_{B_R}\Psi^2(H,(w-k)_-,c)(x,0,t)\zeta^2(x,0)\,\,dx\\
&\qquad \leq
\int_{B_R}\Psi^2(H,(w-k)_-,c)(x,0,t^{\ast})\zeta^2(x,0)\,\,dx\\
&\qquad \qquad
+2m\int_{B^{\ast}_R\times[t^{\ast},0]}y^a(M-w^{\ast})^{\alpha}\Psi(H,(w^{\ast}-k)_-,c)|\nabla
\zeta|^2 \,\,dxdyd\tau.
\end{aligned}
\end{equation*}
Combining this with the previous Lemma \ref{lem-violate-1} gives
\begin{equation}\label{eq-PSI-1}
\begin{aligned}
&\int_{B_R}\Psi^2(H,(w-k)_-,c)(x,0,t)\zeta^2(x,0)\,\,dx \\
&\qquad \leq [(s_1-s_0)\ln
2]^2\left(\frac{1-\rho}{1-\frac{\rho}{2}}\right)|B_R|+\frac{R^{2\sigma+a-1}\omega^{\alpha}\left(1-\frac{1}{2^{s_0}}\right)^{\alpha}}{(1+a)\nu^2}(s_1-s_0)\ln
2|B_R|.
\end{aligned}
\end{equation}
The integral on the left hand side of \eqref{eq-PSI-1} is estimated
below by integrating over the smaller set
\begin{equation*}
\left\{x\in B_{(1-\nu)R}:
w(x,t)<\mu^-+\frac{\omega}{2^{s_1}}\right\}.
\end{equation*}
On such a set, since the function $\Psi$ is a decreasing function of
$H$, we find
\begin{equation*}
\Psi^2\left(H,\left(w-\left(\mu^-+\frac{\omega}{2^{s_0}}\right)\right)_-,\frac{\omega}{2^{s_1}}\right)
\geq (s_1-s_0-1)^2\ln^22,
\end{equation*}
and therefore \eqref{eq-PSI-1} gives
\begin{equation*}
\begin{aligned}
&\left|\left\{ x\in B_{(1-\nu)R}:w(x,t)<\mu^-+\frac{\omega}{2^{s_1}}\right\} \right|\\
&\qquad \leq
\left(\frac{1-\rho}{1-\frac{\rho}{2}}\right)\left(\frac{s_1-s_0}{s_1-s_0-1}\right)^2|B_R|+\frac{\gamma}{\nu^2(s_1-s_0-2)}|B_R|
\end{aligned}
\end{equation*}
for constant
$\gamma=\frac{R^{2\sigma+a-1}\omega^{\alpha}\left(1-\frac{1}{2^{s_0}}\right)^{\alpha}}{(1+a)\ln
2}$. On the other hand
\begin{equation*}
\begin{aligned}
&\left|\left\{ x\in
B_{R}:w(x,t)<\mu^-+\frac{\omega}{2^{s_1}}\right\} \right|\\
&\qquad \leq\left|\left\{ x\in
B_{(1-\nu)R}:w(x,t)<\mu^-+\frac{\omega}{2^{s_1}}\right\}
\right|+\left|B_R\bs B_{(1-\nu)R}\right|\\
&\qquad \leq\left|\left\{ x\in
B_{(1-\nu)R}:w(x,t)<\mu^-+\frac{\omega}{2^{s_1}}\right\}
\right|+n\nu\left|B_R\right|.
\end{aligned}
\end{equation*}
Therefore
\begin{equation*}
\begin{aligned}
&\left|\left\{ x\in
B_{R}:w(x,t)<\mu^-+\frac{\omega}{2^{s_1}}\right\} \right|\\
&\qquad \leq
\left[\left(\frac{1-\rho}{1-\frac{\rho}{2}}\right)\left(\frac{s_1-s_0}{s_1-s_0-1}\right)^2+\frac{\gamma}{\nu^2(s_1-s_0-2)}+n\nu\right]|B_R|
\end{aligned}
\end{equation*}
for all $t\in(t^{\ast},0)$.\\
\indent To prove the Lemma, we first choose $\nu$ so small that
$n\nu\leq \frac{3}{8}\rho^2$ and then $s_1$ so large that
\begin{equation*}
\frac{\gamma}{\nu^2(s_1-s_0-2)}\leq \frac{3}{8}\rho^2, \qquad
\left(\frac{s_1-s_0}{s_1-s_0-1}\right)^2\leq
\left(1-\frac{1}{2}\rho\right)(1+\rho).
\end{equation*}
\end{proof}
Well-definedness of $\lim_{y\to 0}y^aw^{\ast}_y$ gives us that there
is no jumping near the hyper-plane $y=0$. More precisely,
\begin{equation*}
w^{\ast}(x,y,t)-w(x,t)=O(y^{1-a}) \qquad a.e. \,\,\, B_R
\end{equation*}
as $y$ goes to zero since the fact that $y^aw^{\ast}_y$ has a limit
as $y\to 0$, immediately implies that
$\lim_{y\to0}\frac{w^{\ast}(x,y,t)-w(x,t)}{y^{1-a}}$ has the same
limit. Hence, we can also make similar estimates of $w^{\ast}$ in
the cube $B^{\ast}_R$ using the measure condition (Lemma
\ref{lem-w-mu--}):
\begin{equation*}
\left|\left\{x\in B_R;\, w(x,t)>
\mu^-+\frac{\omega}{2^{s_1}}\right\}\right|\,\,>
\left(\frac{\rho}{2}\right)^2\,|B_R|>0
\end{equation*}
for all $t\in[t^{\ast},0]$. The conclusion follows.
\begin{lemma}\label{lem-overline-rho}
There exists a constant $\overline{\rho}>0$ such that
\begin{equation*}
\left|\left\{(x,y)\in B^{\ast}_R:w^{\ast}(x,y,t)>
\mu^-+\frac{\omega}{2^{s_1+1}}\right\}\right|>\overline{\rho}|B^{\ast}_R|
\end{equation*}
for all $t\in[t^{\ast},t_0]$.
\end{lemma}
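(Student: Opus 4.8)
The plan is to transport the boundary measure information just obtained — for every $t\in[t^\ast,0]$ the slice $\{x\in B_R : w(x,t)>\mu^-+\omega/2^{s_1}\}$ covers more than $(\rho/2)^2$ of $B_R$ — a definite distance into the half-space, and then to count volume in $B^\ast_R=B_R\times(0,R)$ by Fubini. The only analytic input is the continuity estimate recorded just before the statement, namely $w^\ast(x,y,t)-w(x,t)=O(y^{1-a})=O(y^{2\sigma})$. Written quantitatively, there is a constant $C$, controlled by $\sup\bigl|[(M-w)^{1/m}]_t\bigr|$ on the relevant region (finite since the exponent $1/m-1=-\alpha$ is positive and $0\le M-w=v^\ast\le M$), such that $|w^\ast(x,y,t)-w(x,t)|\le C\,y^{2\sigma}$ for a.e.\ $x\in B_R$, all $0<y<R$ and all $t\in[t^\ast,0]$. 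Equivalently one may start from $w^\ast(\cdot,y,t)=w(\cdot,t)\ast P(y)$ and use that $P(y)$ is an approximate identity; this is the same estimate in different packaging.

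Granting this, I would fix $\delta_0>0$ so small that $C\,\delta_0^{2\sigma}\le \omega/2^{s_1+1}$. Then for every such $t$, every $0<y<\delta_0$ and every $x$ with $w(x,t)>\mu^-+\omega/2^{s_1}$,
\[
w^\ast(x,y,t)\ \ge\ w(x,t)-C\,y^{2\sigma}\ >\ \mu^-+\frac{\omega}{2^{s_1}}-\frac{\omega}{2^{s_1+1}}\ =\ \mu^-+\frac{\omega}{2^{s_1+1}},
\]
so that $\bigl\{(x,y)\in B^\ast_R : w^\ast(x,y,t)>\mu^-+\omega/2^{s_1+1}\bigr\}$ contains the cylinder $\{x\in B_R : w(x,t)>\mu^-+\omega/2^{s_1}\}\times(0,\delta_0)$. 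Combining with the complement form of Lemma \ref{lem-w-mu--} displayed in the paragraph preceding the statement and integrating in $y$,
\[
\Bigl|\bigl\{(x,y)\in B^\ast_R : w^\ast(x,y,t)>\mu^-+\tfrac{\omega}{2^{s_1+1}}\bigr\}\Bigr|\ \ge\ \delta_0\Bigl(\frac{\rho}{2}\Bigr)^2|B_R|\ =\ \frac{\delta_0}{R}\Bigl(\frac{\rho}{2}\Bigr)^2\,|B^\ast_R|,
\]
which is the asserted inequality with $\overline{\rho}=\dfrac{\delta_0}{R}\bigl(\tfrac{\rho}{2}\bigr)^2$; all estimates used hold for every $t\in[t^\ast,0]=[t^\ast,t_0]$, so the argument is uniform in $t$.

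The delicate point, and where I would spend the most care, is that $\overline{\rho}$ must be a genuine positive constant that does not degenerate: $\delta_0$ depends on $\omega$ and on the constant $C$, and $C$ in turn involves a bound on the time derivative $[(M-w)^{1/m}]_t$. To keep everything scale-consistent I would carry out the computation in the rescaled variables $\tau=\theta_0^{\alpha}t$, $\overline{w}(x,\tau)=w(x,\theta_0^{-\alpha}\tau)$ already used earlier in this section, in which the natural time scale $R^{2\sigma}$ and the level $\omega/2^{s_1}$ match, so that $\delta_0$, hence $\overline{\rho}$, come out depending only on $n,m,\sigma,\rho,s_1$ and not on $\omega$ or $\mu^\pm$. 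Everything else is routine: the pointwise lower bound on the good slice and the Fubini volume count require no further ideas once the uniform choice of the slab thickness $\delta_0$ is fixed.
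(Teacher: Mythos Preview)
Your approach is correct and is essentially the one the paper takes. The paper does not give a separate proof for this lemma; the paragraph immediately preceding the statement is the entire argument, recording $w^{\ast}(x,y,t)-w(x,t)=O(y^{1-a})$ from the well-definedness of $\lim_{y\to 0}y^aw^{\ast}_y$, restating the boundary measure bound from the previous lemma, and declaring ``the conclusion follows.'' You have filled in precisely these details via the slab-and-Fubini count, and your discussion of the uniformity of $\overline{\rho}$ (and the suggestion to pass to the rescaled time variable) makes explicit a point the paper leaves unaddressed.
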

We list Lemma from the literature and adapted here to our situation.
\begin{lemma}[De Giorgi\cite{De}]\label{De-giorgi}
If $f\in W^{1,1}(B_r)$ $(B_r\subset\R^n)$ and $l,k\in \R$, $k<l$,
then
\begin{equation*}
(l-k)\left|\left\{x\in B_r: f(x)>l\right\}\right|\leq
\frac{Cr^{n+1}}{\left|\left\{x\in B_r:
f(x)<k\right\}\right|}\int_{k<f<l}|\nabla f|\,\,dx,
\end{equation*}
where $C$ depends only on $n$.
\end{lemma}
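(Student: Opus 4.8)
The plan is to prove this by the classical Poincar\'e-type potential estimate used by De Giorgi, specialized to the present setting. Write $A_k=\{x\in B_r:f(x)<k\}$ and $A_l=\{x\in B_r:f(x)>l\}$, and introduce the truncation $g=\max\{k,\min\{f,l\}\}$, so that $g\in W^{1,1}(B_r)$, $k\le g\le l$, $g\equiv k$ on $A_k$, $g\equiv l$ on $A_l$, and $\nabla g=\nabla f$ a.e.\ on $\{k<f<l\}$ while $\nabla g=0$ a.e.\ elsewhere. Thus it suffices to bound $(l-k)|A_l|$ by $|A_k|^{-1}$ times an $r$-weighted multiple of $\int_{\{k<f<l\}}|\nabla f|\,dx$.

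The first step is the pointwise potential bound: for a.e.\ $x\in B_r$,
\[
g(x)-k\;\le\;\frac{C(n)\,r^{n}}{|A_k|}\int_{B_r}\frac{|\nabla g(z)|}{|x-z|^{n-1}}\,dz .
\]
Since $B_r$ is a cube, hence convex, each segment $[x,y]$ with $y\in A_k$ stays in $B_r$, so by absolute continuity of $g$ along almost every line one has $g(x)-g(y)=-\int_0^{|x-y|}\nabla g(x+\rho\omega)\cdot\omega\,d\rho$ with $\omega=(y-x)/|x-y|$. Using $g(y)=k$ and averaging over $y\in A_k$ in polar coordinates centred at $x$, the Jacobian $\rho^{n-1}$ is bounded by $(C(n)r)^{n-1}$, one further radial integration (exchanging the order of the two radial integrals) contributes an extra factor $C(n)r$, and converting back to Cartesian coordinates in the $z$-variable yields the displayed inequality with $r^{n}=r^{n-1}\cdot r$.

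The second step is to integrate this bound over $x\in A_l$, where $g(x)=l$, and exchange the order of integration by Fubini:
\[
(l-k)\,|A_l|\;\le\;\frac{C(n)\,r^{n}}{|A_k|}\int_{\{k<f<l\}}|\nabla f(z)|\left(\int_{A_l}\frac{dx}{|x-z|^{n-1}}\right)dz .
\]
For each fixed $z$ the inner integral is at most $\int_{B_{C(n)r}(z)}|x-z|^{-(n-1)}\,dx=C(n)r$, which supplies the last power of $r$ and gives exactly
\[
(l-k)\,|A_l|\;\le\;\frac{C(n)\,r^{n+1}}{|A_k|}\int_{k<f<l}|\nabla f|\,dx .
\]

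The main point requiring care is the measure-theoretic justification of the line integration for a merely $W^{1,1}$ function: by Fubini together with the absolute-continuity-on-lines characterization of Sobolev functions, for a.e.\ $x\in B_r$ the identity $g(x)-g(y)=\int_0^1\nabla g(x+t(y-x))\cdot(y-x)\,dt$ holds for a.e.\ $y\in B_r$, and one must check that the attendant null sets do not interfere with the subsequent integrations. One could instead first establish the inequality for $f\in C^1(\overline{B_r})$ and then approximate, but this is less convenient because the measures $|\{f>l\}|$ and $|\{f<k\}|$ are not continuous under $W^{1,1}$ convergence; the direct argument avoids that issue. Apart from this, the proof is the bookkeeping of powers of $r$ indicated above.
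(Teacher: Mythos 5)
Your argument is correct. Note that the paper itself offers no proof of this lemma: it is quoted from the literature with a citation to De Giorgi, so there is nothing to compare against line by line. What you give is the standard proof: truncate to $g=\max\{k,\min\{f,l\}\}$, derive the pointwise potential bound
\begin{equation*}
g(x)-k\le\frac{C(n)r^{n}}{|\{f<k\}|}\int_{B_r}\frac{|\nabla g(z)|}{|x-z|^{n-1}}\,dz
\end{equation*}
by integrating along segments from $x$ to points of $\{f<k\}$ (legitimate because the cube $B_r$ is convex, a point you correctly flag, since in this paper $B_r$ denotes $[-r,r]^n$), and then integrate over $\{f>l\}$ and apply Tonelli, using $\int_{B_r}|x-z|^{1-n}\,dx\le C(n)r$ to collect the final power of $r$. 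The bookkeeping of the powers of $r$ is right ($r^{n-1}$ from the angular Jacobian, one $r$ from the extra radial integration, one $r$ from the Riesz potential of the indicator), the identification $\nabla g=\nabla f\,\mathbf{1}_{\{k<f<l\}}$ a.e.\ is the standard truncation rule for Sobolev functions, and your handling of the absolute-continuity-on-lines issue for a merely $W^{1,1}$ function is adequate. The proof stands as written.
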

For the remainder of this section we assume that \eqref{eq-H=sup}
holds.
\begin{lemma}\label{lem-Q-ast-small}
\item If \eqref{eq-p-harnack-cond} is violated, for every $\nu_{\ast}\in (0,1)$, there exists a number $s^{\ast}>s_1+1>s_0$ independent of $\omega$
and $R$ such that
\begin{equation*}
\left|\left\{(x,y,t)\in
Q^{\ast}_{R}\left(\theta_0,0,1-\frac{\rho}{2}\right):w^{\ast}(x,y,t)\leq\mu^-+\frac{\omega}{2^{s^{\ast}}}\right\}\right|\leq
\nu_{\ast}\left|Q^{\ast}_{R}\left(\theta_0,0,1-\frac{\rho}{2}\right)\right|.
\end{equation*}

\end{lemma}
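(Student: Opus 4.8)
The approach is to run a De~Giorgi level iteration in the \emph{extended} variables $(x,y)$, since the only gradient quantity that the energy estimate Lemma~\ref{lem-Loc-Ene-Est-1} controls is $\iint y^{a}|\D(\eta(w^{*}-k)_{\pm})|^{2}$, the full gradient of $w^{*}$, and there is no analogous control of $\D_{x}$ of the trace $w=w^{*}(\cdot,0,\cdot)$. Fix $k_{s}=\mu^{-}+\omega 2^{-s}$ for integers $s\ge s_{1}+1$, write $Q:=Q^{*}_{R}(\theta_{0},0,1-\tfrac{\rho}{2})$ -- whose time interval $(-\tfrac{\rho}{2}\theta_{0}^{-\alpha}R^{2\sigma},0)$ is contained in $[t^{*},0]$, so Lemma~\ref{lem-overline-rho} is available on it -- and set
\[
Y_{s}:=\bigl|\{(x,y,t)\in Q:\ w^{*}(x,y,t)<k_{s}\}\bigr| .
\]
The task is to produce an integer $s^{*}>s_{1}+1$, depending only on $\nu_{*}$, $s_{1}$, $\overline{\rho}$ and the data, with $Y_{s^{*}}\le\nu_{*}|Q|$; the sets $\{w^{*}\le k_{s^{*}}\}$ and $\{w^{*}<k_{s^{*}}\}$ agree up to a null set, so this is the assertion.

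Fix a time level $t$ in the interval of $Q$. The energy estimate gives $\iint_{B^{*}_{R}}y^{a}|\D w^{*}(\cdot,\cdot,t)|^{2}<\infty$ for a.e.\ such $t$, and since $a=1-2\sigma<1$ the weight $y^{-a}$ is locally integrable, so by Cauchy--Schwarz $w^{*}(\cdot,\cdot,t)\in W^{1,1}(B^{*}_{R})$. Apply the De~Giorgi isoperimetric inequality (Lemma~\ref{De-giorgi}) in $\R^{n+1}$ to $-w^{*}(\cdot,\cdot,t)$ on the box $B^{*}_{R}$ with the two levels $-k_{s}<-k_{s+1}$; the lower bound needed for $|\{w^{*}(\cdot,\cdot,t)>k_{s}\}\cap B^{*}_{R}|\ge|\{w^{*}(\cdot,\cdot,t)>k_{s_{1}+1}\}\cap B^{*}_{R}|$ is exactly Lemma~\ref{lem-overline-rho}. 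This yields
\[
\frac{\omega}{2^{s+1}}\bigl|\{w^{*}(\cdot,\cdot,t)<k_{s+1}\}\cap B^{*}_{R}\bigr|
\le\frac{C R}{\overline{\rho}}\int_{\{k_{s+1}<w^{*}(\cdot,\cdot,t)<k_{s}\}\cap B^{*}_{R}}|\D w^{*}|\,dx\,dy .
\]
Integrate in $t$ over the time interval of $Q$, split $|\D w^{*}|=(|\D w^{*}|\,y^{a/2})\cdot y^{-a/2}$, apply Cauchy--Schwarz, and bound the $y^{-a}$ factor by Hölder's inequality with an exponent $q>1$ chosen so that $aq<1$. Writing $\mathcal D_{s}:=\{(x,y,t)\in Q:\ k_{s+1}<w^{*}<k_{s}\}$, one obtains, with $\kappa:=\tfrac12(1-\tfrac1q)\in(0,\tfrac12)$,
\[
\frac{\omega}{2^{s+1}}\,Y_{s+1}\ \le\ \frac{C}{\overline{\rho}}\Bigl(\iint_{\mathcal D_{s}}|\D w^{*}|^{2}y^{a}\Bigr)^{1/2}\,|\mathcal D_{s}|^{\kappa},
\]
where $C$ now also absorbs fixed powers of $R$ and $\theta_{0}$.

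To bound the first factor, observe that on $\mathcal D_{s}$ one has $(w^{*}-k_{s})_{-}=k_{s}-w^{*}\in(0,\omega 2^{-s-1})$ and $\D(w^{*}-k_{s})_{-}=-\D w^{*}$, so $\iint_{\mathcal D_{s}}|\D w^{*}|^{2}y^{a}\le\iint_{Q}|\D(w^{*}-k_{s})_{-}|^{2}y^{a}$, and Lemma~\ref{lem-Loc-Ene-Est-1} applied to $(w^{*}-k_{s})_{-}$ on $Q^{*}_{R}(\theta_{0})$ with a cut-off $\eta=\eta(t)$ depending on $t$ only, $\equiv1$ on $Q$ and vanishing at $t=-\theta_{0}^{-\alpha}R^{2\sigma}$ (so that the $|\D\eta|$-term and the $t_{1}$-term drop and $|\eta_{t}|\lesssim\theta_{0}^{\alpha}R^{-2\sigma}$), together with $(w-k_{s})_{-}\le\omega 2^{-s}$, $0<M-k_{s}+\xi\le M-\mu^{-}$ and $\tfrac1m-1=-\alpha>0$, gives
\[
\iint_{Q}|\D(w^{*}-k_{s})_{-}|^{2}y^{a}\ \le\ C\,\bigl(\omega 2^{-s}\bigr)^{2}(M-\mu^{-})^{-\alpha} ,
\]
the intrinsic factors $\theta_{0}^{\alpha}$ (from $|\eta_{t}|$) and $\theta_{0}^{-\alpha}$ (from the height of $Q^{*}_{R}(\theta_{0})$) cancelling after the crude bound $|\{x\in B_{R}:w(x,t)<k_{s}\}|\le|B_{R}|$ on the trace integral on the right of \eqref{Holder-energy-estimate}. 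Inserting this into the previous display, dividing by $\omega 2^{-s-1}$ (the ratio $\omega 2^{-s}/\omega 2^{-s-1}=2$ being independent of $s$) and using $|\mathcal D_{s}|=Y_{s}-Y_{s+1}$ leads, after the change of variables $\tau=\theta_{0}^{\alpha}t$ used in the proof of Lemma~\ref{lem-Hoilder-1} and the rescaling $x,y\mapsto x/R,y/R$ which normalise all the cylinders to unit size, to a recursion of the shape
\[
Y_{s+1}\ \le\ C_{0}\,\bigl(Y_{s}-Y_{s+1}\bigr)^{\kappa},\qquad s_{1}+1\le s<s^{*},
\]
with $C_{0}$ depending only on $\overline{\rho}$, $m$, $n$, $\sigma$, $A$ and $M$ (through $(M-\mu^{-})^{-\alpha}\le M^{-\alpha}$). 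Rewriting the recursion as $Y_{s+1}^{1/\kappa}\le C_{0}^{1/\kappa}(Y_{s}-Y_{s+1})$, summing over $s_{1}+1\le s\le s^{*}-1$, telescoping and using that $s\mapsto Y_{s}$ is non-increasing and non-negative, one gets $(s^{*}-1-s_{1})\,Y_{s^{*}}^{1/\kappa}\le C_{0}^{1/\kappa}\,Y_{s_{1}+1}\le C_{0}^{1/\kappa}|Q|$; hence $Y_{s^{*}}\le\nu_{*}|Q|$ provided $s^{*}-1-s_{1}\ge C_{0}^{1/\kappa}\nu_{*}^{-1/\kappa}$, and since this lower bound involves neither $\omega$ nor $R$, choosing such an $s^{*}$ (also $>s_{1}+1>s_{0}$) finishes.

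The step I expect to be the real obstacle is precisely the bookkeeping around the energy estimate that produces $C_{0}$: one must verify that, after the intrinsic change of variables, the dangerous powers of $\omega$ carried by $\theta_{0}=\omega/A$ -- appearing both through $|\eta_{t}|$ and through the height $\theta_{0}^{-\alpha}R^{2\sigma}$ of the cylinder $Q^{*}_{R}(\theta_{0})$, and, via the factor $y^{-a}$, through the Hölder step -- genuinely cancel, so that $C_{0}$ is independent of $\omega$; it is the intrinsic geometry of the cylinders together with the inequality $\omega\le M-\mu^{-}$ that is designed to make this happen, but the matching of the exponents of $\omega$, $R$ and $\theta_{0}$ requires care. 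A secondary, routine point is that the right-hand side of \eqref{Holder-energy-estimate} naturally produces the $x$-only trace integral $\int_{B_{R}}[\cdots]|\eta\eta_{t}|$; passing from it to a bound in terms of the extended-space quantities $Y_{s}$, $\mathcal D_{s}$ costs nothing here because only the fixed finite number $s^{*}-s_{1}$ of levels is iterated, all the genuine decay being supplied by the factor $|\mathcal D_{s}|^{\kappa}=(Y_{s}-Y_{s+1})^{\kappa}$ through the telescoping sum.
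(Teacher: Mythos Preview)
Your overall strategy is exactly the paper's: apply De~Giorgi's isoperimetric lemma to $-w^{*}(\cdot,\cdot,t)$ on $B^{*}_{R}$ at each time, feed in the measure lower bound from Lemma~\ref{lem-overline-rho}, split $|\D w^{*}|=|\D w^{*}|y^{a/2}\cdot y^{-a/2}$ and use H\"older (the paper simply fixes your $q=2/(1+a)$, which gives $\kappa=(1-a)/4$), bound the weighted gradient integral by the energy estimate, and telescope over $s$.

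There is, however, a real gap in your use of the energy estimate. You take $\eta=\eta(t)$ depending on $t$ only, in order to drop the $|\D\eta|$-term; but Lemma~\ref{lem-Loc-Ene-Est-1} requires that $\eta(w^{*}-k)_{-}$ restricted to $B^{*}_{r}$ be compactly supported in $B_{r}\times(-r,r)$ --- this is what kills the lateral boundary terms in the integration by parts in its proof. With $\eta=\eta(t)$ and $r=R$ this would force $(w^{*}-k_{s})_{-}$ itself to vanish near $\partial B^{*}_{R}\cap\{y>0\}$, and you have no such information in the second alternative. The paper avoids this by taking a full space--time cutoff $\eta(x,y,t)$ on the \emph{enlarged} cylinder $Q^{*}_{2R}(\theta_{0})$, identically~$1$ on $Q^{*}_{R}(\theta_{0})$, with $|\D\eta|\le 1/R$ and $|\eta_{t}|\le\theta_{0}^{\alpha}R^{-2\sigma}$. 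The $|\D\eta|$-term you were trying to suppress then contributes at most
\[
\frac{1}{R^{2}}\Bigl(\frac{\omega}{2^{s}}\Bigr)^{2}\iint_{Q^{*}_{2R}(\theta_{0})}y^{a}\,dx\,dy\,dt,
\]
which has the same shape as the $|\eta\eta_{t}|$-term and, after dividing by $(\omega/2^{s+1})^{2}$, leaves an $s$-free constant; the intrinsic geometry of $Q^{*}_{R}(\theta_{0})$ then makes the residual $\omega$- and $R$-powers cancel against those appearing on the left through $|Q^{*}_{R}(\theta_{0},0,1-\rho/2)|$, precisely as you anticipated in your closing paragraph. With this one correction --- enlarge the cylinder and accept the $|\D\eta|$-term rather than trying to kill it --- your argument is the paper's.
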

\begin{proof}
Apply Lemma \eqref{De-giorgi} over the cube $B^{\ast}_{R}$ for
$f(x,y)=-w^{\ast}(x,y,t)$, $t\in\left(-\frac{\rho
\theta_0^{-\alpha}R^{2\sigma}}{2},0\right)$ and the levels
\begin{equation*}
l=-\mu^--\frac{\omega}{2^{s+1}},\qquad k=-\mu^--\frac{\omega}{2^s},
\qquad s=s_1+1, s_1+2,\cdots,s^{\ast}.
\end{equation*}
Then, from the Lemma \eqref{lem-overline-rho}, we have
\begin{equation}\label{eq-A-s}
\left(\frac{\omega}{2^{s+1}}\right)|A_{s+1}(t)|
\leq\frac{4CR^{n+2}}{\overline{\rho}|B^{\ast}_R|}\int_{A_s(t)\bs
A_{s+1}(t)}|\nabla w^{\ast}|\,\,dxdy
\end{equation}
where
\begin{equation*}
A_s(t)=\left\{(x,y)\in B^{\ast}_R:
w^{\ast}(x,y,t)<\mu^-+\frac{\omega}{2^s}\right\}.
\end{equation*}
Set
\begin{equation*}
A_s=\left\{(x,y,t)\in B^{\ast}_R\times\left[-\frac{\rho
\theta_0^{-\alpha}
R^{2\sigma}}{2},0\right]:w^{\ast}(x,y,t)<\mu^-+\frac{\omega}{2^s}\right\}.
\end{equation*}
From this, integrating \eqref{eq-A-s} over $\left(-\frac{\rho
\theta_0^{-\alpha}R^{2\sigma}}{2},0\right)$ we get
\begin{equation}\label{eq-holder-2th-order}
\begin{aligned}
&\left(\frac{\omega}{2^{s+1}}\right)|A_{s+1}| \leq C'R\iint_{A_s\bs
A_{s+1}}|\nabla
w^{\ast}|y^{\frac{a}{2}}y^{-\frac{a}{2}}\,\,dxdyd\tau\\
&\qquad \leq
C'R\left(\iint_{Q^{\ast}_R\left(\theta_0,0,1-\frac{\rho}{2}\right)}|\nabla
w^{\ast}|^2y^a\,\,dxdyd\tau
\right)^{\frac{1}{2}}\\
&\qquad \qquad
\times\left(\iint_{Q^{\ast}_R\left(\theta_0,0,1-\frac{\rho}{2}\right)}\left(y^{-\frac{a}{2}}\right)^{\frac{4}{1+a}}\,\,dxdyd\tau\right)^{\frac{1+a}{4}}
\left(\iint_{A_s \bs A_{s+1}}\,\,dxdyd\tau\right)^{\frac{1-a}{4}}.
\end{aligned}
\end{equation}
Take the $\frac{4}{1-a}$-th power to obtain
\begin{equation}\label{eq-holder-2th-order}
\left(\frac{\omega}{2^{s+1}}\right)^{\frac{4}{1-a}}|A_{s+1}|^{\frac{4}{1-a}}\leq
C''\left(|A_s|-|A_{s+1}|\right)\left(\iint_{Q^{\ast}_R\left(\theta_0,0,1-\frac{\rho}{2}\right)}|\nabla
w^{\ast}|^2y^a\,\,dxdyd\tau\right)^{\frac{2}{1-a}}
\end{equation}
where
$C''=(C'R)^{\frac{4}{1-a}}\left(\frac{1+a}{1-a}R^{n+\frac{1-a}{1+a}}(t_0-t^{\ast})\right)^{\frac{1+a}{1-a}}$.\\
\indent We estimate the integral on the right hand side by making
use of \eqref{Holder-energy-estimate} written over again as
$Q^{\ast}_{2R}(\theta_0)$, $k=\mu^-+\frac{\omega}{2^{s}}$ and as
$\eta(x,z,t)$ a smooth cut-off function in $Q^{\ast}_{2R}(\theta_0)$
which equals one on $Q^{\ast}_{R}(\theta_0)$, vanishes on the
parabolic boundary of $Q^{\ast}_{2R}(\theta_0)$ and is such that
\begin{equation*}
|\nabla \eta|\leq \frac{1}{R}, \qquad |\eta_t|\leq
\frac{\omega^{\alpha}}{A^{\alpha}R^{2\sigma}}.
\end{equation*}
Then, since $|\nabla w^{\ast}|=|\nabla (w^{\ast}-k)_-|$, we deduce
\begin{equation*}
\begin{aligned}
&\left[\iint_{Q^{\ast}_R\left(\theta_0,0,1-\frac{\rho}{2}\right)}|\nabla
(w^{\ast}-k)_-|^2y^a\,\,dydxd\tau\right]^{\frac{2}{1-a}} \\
&\qquad \qquad \leq
C(A,a,M,n,R,\sigma,\alpha)\left(\frac{\omega}{2^s}\right)^{\frac{4}{1-a}}\left|Q_R\left(\theta_0,0,1-\frac{\rho}{2}\right)\right|^{\frac{2}{1-a}}.
\end{aligned}
\end{equation*}
Substitute this estimate into \eqref{eq-holder-2th-order} and divide
through by $\left(\frac{\omega}{2^{s+1}}\right)^{\frac{4}{1-a}}$.
\begin{equation}\label{eq-A-s-2}
|A_{s+1}|^{\frac{4}{1-a}}\leq
C'''\left|Q^{\ast}_R\left(\theta_0,0,1-\frac{\rho}{2}\right)\right|^{\frac{2}{1-a}}(|A_{s}|-|A_{s+1}|).
\end{equation}
These inequalities are valid for all $s_1+1\leq s\leq s^{\ast}-1$.
Adding \eqref{eq-A-s-2} for $s=s_1+1, s_1+2,\cdots,s^{\ast}-1$, we
have
\begin{equation*}
(s^{\ast}-s_1-2)|A_{s^{\ast}}|^{\frac{4}{1-a}} \leq
C'''\left|Q^{\ast}_R\left(\theta_0,0,1-\frac{\rho}{2}\right)\right|^{\frac{3-a}{1-a}}.
\end{equation*}
To prove the Lemma, we divide $(s^{\ast}-s_1-2)$ and take $s^{\ast}$
so large that
\begin{equation*}
\left(\frac{C'''}{s^{\ast}-s_1-2}\right)^{\frac{1-a}{4}}\frac{1}{\left|Q^{\ast}_R\left(\theta_0,0,1-\frac{\rho}{2}\right)\right|^{\frac{1+a}{4}}}\leq
\nu^{\ast}.
\end{equation*}
\end{proof}
Using the relation between $w$ and $w^{\ast}$, we show next that we
can replace the $Q^{\ast}_R\left(\theta_0,0,1-\frac{\rho}{2}\right)$
and $w^{\ast}$ by $Q_R\left(\theta_0,0,1-\frac{\rho}{2}\right)$ and
$w$ respectively.
\begin{lemma}
\item In addition, we have
\begin{equation*}
\left|\left\{(x,t)\in
Q_R\left(\theta_0,0,1-\frac{\rho}{2}\right):w(x,t)\leq\mu^-+\frac{\omega}{2^{s^{\ast}+1}}\right\}\right|\leq
\nu^{\star}\left|Q_R\left(\theta_0,0,1-\frac{\rho}{2}\right)\right|
\end{equation*}
for every $\nu_{\star}\in (0,1)$.
\end{lemma}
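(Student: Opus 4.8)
The plan is to deduce the assertion for $w$ on the cylinder $Q_R(\theta_0,0,1-\frac{\rho}{2})$ from the estimate for $w^{\ast}$ on $Q^{\ast}_R(\theta_0,0,1-\frac{\rho}{2})$ supplied by Lemma \ref{lem-Q-ast-small}, using the boundary behaviour $w^{\ast}(x,y,t)-w(x,t)=O(y^{1-a})$ recorded just before Lemma \ref{lem-overline-rho}. Observe first that, since $B^{\ast}_R=B_R\times(0,R)$, one has $|Q^{\ast}_R(\theta_0,0,1-\frac{\rho}{2})|=R\,|Q_R(\theta_0,0,1-\frac{\rho}{2})|$, so a strip of definite width in the $y$-direction turns an $(n+1)$-dimensional measure bound into an $n$-dimensional one.

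I would proceed in four steps. First, using the relation $w^{\ast}-w=O(y^{1-a})$, fix a width $y_0\in(0,R]$ and an exceptional set $\mathcal E\subset Q_R(\theta_0,0,1-\frac{\rho}{2})$ with $|\mathcal E|\le\frac{\nu_{\star}}{2}|Q_R(\theta_0,0,1-\frac{\rho}{2})|$ such that $|w^{\ast}(x,y,t)-w(x,t)|\le\frac{\omega}{2^{s^{\ast}+1}}$ for all $0<y<y_0$ and all $(x,t)\in Q_R(\theta_0,0,1-\frac{\rho}{2})\setminus\mathcal E$; this upgrades the a.e.\ pointwise statement to a uniform one off a small set (for instance via an Egorov argument for $y\mapsto\sup_{0<z<y}|w^{\ast}(\cdot,z,\cdot)-w|$, which tends to $0$ a.e.\ as $y\to0$). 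Second, for $(x,t)\notin\mathcal E$ with $w(x,t)\le\mu^-+\frac{\omega}{2^{s^{\ast}+1}}$ this forces $w^{\ast}(x,y,t)\le\mu^-+\frac{\omega}{2^{s^{\ast}}}$ for $0<y<y_0$, so that, writing $\mathcal A=\{(x,t)\in Q_R(\theta_0,0,1-\frac{\rho}{2}):w(x,t)\le\mu^-+\frac{\omega}{2^{s^{\ast}+1}}\}$,
\[
(\mathcal A\setminus\mathcal E)\times(0,y_0)\ \subseteq\ \Big\{(x,y,t)\in Q^{\ast}_R\big(\theta_0,0,1-\frac{\rho}{2}\big):\ w^{\ast}(x,y,t)\le\mu^-+\frac{\omega}{2^{s^{\ast}}}\Big\},
\]
the condition $y_0\le R$ guaranteeing the left side lies in $Q^{\ast}_R$. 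Third, integrating out $y$ (Fubini) and invoking Lemma \ref{lem-Q-ast-small} gives $y_0\,|\mathcal A\setminus\mathcal E|\le\nu_{\ast}\,|Q^{\ast}_R(\theta_0,0,1-\frac{\rho}{2})|=\nu_{\ast}R\,|Q_R(\theta_0,0,1-\frac{\rho}{2})|$, whence
\[
|\mathcal A|\ \le\ \Big(\frac{\nu_{\ast}R}{y_0}+\frac{\nu_{\star}}{2}\Big)\big|Q_R\big(\theta_0,0,1-\frac{\rho}{2}\big)\big| .
\]
Fourth, running Lemma \ref{lem-Q-ast-small} with the auxiliary level parameter $\nu_{\ast}$ chosen so small that $\frac{\nu_{\ast}R}{y_0}\le\frac{\nu_{\star}}{2}$ yields the stated bound with $\nu^{\star}=\nu_{\star}$.

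The step I expect to be the main obstacle is the consistency of this last choice: a priori $y_0$ is tied to the threshold $\omega 2^{-s^{\ast}-1}$, hence to $s^{\ast}$, while $s^{\ast}$ is itself produced from $\nu_{\ast}$ in Lemma \ref{lem-Q-ast-small}, so selecting $\nu_{\ast}$ in terms of $y_0$ is circular unless the constant relating the two cylinders can be made to depend only on the data. The clean way around this is to bypass the pointwise strip and use instead the trace inequality
\[
|w^{\ast}(x,y,t)-w(x,t)|^2\ \le\ \frac{y^{1-a}}{1-a}\int_0^{R}z^{a}\,|\nabla w^{\ast}(x,z,t)|^2\,dz ,
\]
which follows by Cauchy--Schwarz from $|\nabla w^{\ast}|=z^{-a/2}\,(z^{a/2}|\nabla w^{\ast}|)$ (note $a<1$), together with the local energy estimate of Lemma \ref{lem-Loc-Ene-Est-1} for $(w^{\ast}-k)_-$ with $k=\mu^-+\omega 2^{-s^{\ast}}$. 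Splitting $\mathcal A$ into the part where $w^{\ast}(x,\cdot,t)$ stays below $\mu^-+\omega 2^{-s^{\ast}}$ on $(0,R)$ --- controlled by Lemma \ref{lem-Q-ast-small} as above --- and the part where it crosses that level, and estimating the latter by the trace inequality against the energy (which carries the compensating factor $\omega^2 2^{-2s^{\ast}}$ coming from $(w^{\ast}-k)_-\le\omega 2^{-s^{\ast}}$), one keeps all constants independent of $s^{\ast}$ and can then close the reduction from an arbitrary $\nu_{\star}$. The remaining bookkeeping --- the measure comparisons and the two Fubini steps --- is routine.
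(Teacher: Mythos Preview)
Your second route---trace inequality plus the local energy estimate for $(w^{\ast}-k)_-$ with $k=\mu^-+\omega 2^{-s^{\ast}}$---is exactly what the paper does, and your observation that the factor $(\omega/2^{s^{\ast}})^2$ from the energy cancels the same factor on the left, so that nothing depends on $s^{\ast}$, is the essential point. You are also right that the first (Egorov) attempt is circular.

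There is, however, one real gap in the sketch of the second approach. Splitting $\mathcal A$ according to whether $w^{\ast}(x,\cdot,t)$ stays below the level on the \emph{full} strip $(0,R)$ does not close: on the ``crossing'' part the trace bound and the energy estimate together give only $|\mathcal A_2|\le C\,|Q_R(\theta_0,0,1-\frac{\rho}{2})|$ with a \emph{fixed} constant $C$---the $\omega 2^{-s^{\ast}}$ factors indeed cancel, but nothing is small. What is missing is a thin-strip parameter. The paper supplies it by writing, for each $s\in(0,R]$,
\[
(w-k)_-\ \le\ (w^{\ast}-k)_-(s)+\frac{s^{1-a}}{1-a}\Bigl(\int_0^s z^{a}\,|\nabla(w^{\ast}-k)_-|^2\,dz\Bigr)^{1/2},
\]
squaring, integrating in $(x,t)$ over $Q_R(\theta_0,0,1-\frac{\rho}{2})$, and then \emph{averaging in $s$ over $[0,\varepsilon_1]$}. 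After dividing by $\varepsilon_1$ the gradient term carries a positive power $\varepsilon_1^{2-2a}$; one first chooses $\varepsilon_1$ small depending only on $\nu_{\star}$ and the data, then $\nu_{\ast}$ small relative to $\varepsilon_1$, and finally Lemma~\ref{lem-Q-ast-small} produces $s^{\ast}$ from $\nu_{\ast}$. In your splitting language this amounts to splitting on a thin strip $(0,y_0)$ with $y_0$ small rather than on $(0,R)$: the crossing part then picks up $y_0^{1-a}$, and the order of choices $y_0\to\nu_{\ast}\to s^{\ast}$ is non-circular.
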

\begin{proof}
for every $t$, $x$ fixed and for
$k=\mu^-+\frac{\omega}{2^{s^{\ast}}}$,
\begin{equation*}
\begin{aligned}
\left(w-k\right)_-&=\left(w^{\ast}-k\right)_-(s)-\int_0^s\partial_y\left[\left(w^{\ast}-k\right)_-\right]\,\,dy\\
&\leq\left(w^{\ast}-k\right)_-(s)+\int_0^s\left|\partial_y\left[\left(w^{\ast}-k\right)_-\right]\right|y^{\frac{a}{2}}\cdot y^{-\frac{a}{2}}\,\,dy\\
&\leq
\left(w^{\ast}-k\right)_-(s)+\frac{s^{1-a}}{1-a}\left(\int_0^s\left|\nabla\left(w^{\ast}-k\right)_-\right|^2y^a\,\,dy\right)^{\frac{1}{2}}.
\end{aligned}
\end{equation*}
So, integrating in $x$ and $t$ over
$Q_R\left(\theta_0,0,1-\frac{\rho}{2}\right)$, we have
\begin{equation*}
\begin{aligned}
& \int_{Q_R\left(\theta_0,0,1-\frac{\rho}{2}\right)}
\left[\left(w-k\right)_-\right]^2\,\,dxdt \\
&\qquad \leq
2\Bigg[\int_{Q_R\left(\theta_0,0,1-\frac{\rho}{2}\right)}
\left[\left(w^{\ast}-k\right)_-(s)\right]^2\,\,dxdt\\
&\qquad \qquad
+\frac{s^{2-2a}}{(1-a)^2}\int_{Q^{\ast}_R\left(\theta_0,0,1-\frac{\rho}{2}\right)}\left|\nabla\left(w^{\ast}-k\right)_-\right|^2y^a\,\,dxdydt\Bigg]
\end{aligned}
\end{equation*}
for any $y=s\leq R$. Hence, integrating in $s$ over $[0,\e_1]$,
\begin{equation*}
\begin{aligned}
&\e_1\int_{A(t)}
\left[\left(w-k\right)_-\right]^2\,\,dxdt\\
&\qquad \leq
2\Bigg[\int_{Q^{\ast}_R\left(\theta_0,0,1-\frac{\rho}{2}\right)}
\left[\left(w^{\ast}-k\right)_-(s)\right]^2\,\,dxdydt\\
&\qquad \qquad
+\frac{{\e_1}^{3-2a}}{(3-2a)(1-a)^2}\int_{Q^{\ast}_R\left(\theta_0,0,1-\frac{\rho}{2}\right)}\left|\nabla\left(w^{\ast}-k\right)_-\right|^2y^a\,\,dxdydt\Bigg].
\end{aligned}
\end{equation*}
By making use of \eqref{Holder-energy-estimate} written over as
$Q^{\ast}_{2R}(\theta_0)$, $k=\mu^-+\frac{\omega}{2^{s}}$ and as
$\eta(x,z,t)$ which is given in the proof of Lemma
\eqref{lem-Q-ast-small}. Then, we obtain
\begin{equation*}
\begin{aligned}
&\iint_{Q^{\ast}_R\left(\theta_0,0,1-\frac{\rho}{2}\right)}|\nabla
(w^{\ast}-k)_-|^2y^a\,\,dydxd\tau \\
&\qquad \qquad \leq
C(A,a,M,n,R,\sigma,\alpha)\left(\frac{\omega}{2^{s^{\ast}}}\right)^{2}\left|Q_R\left(\theta_0,0,1-\frac{\rho}{2}\right)\right|.
\end{aligned}
\end{equation*}
Thus, we have
\begin{equation*}
\begin{aligned}
&\e_1\left(\frac{\omega}{2^{s^{\ast}+1}}\right)^2\left|\left\{(x,t)\in
 Q_R\left(\theta_0,0,1-\frac{\rho}{2}\right):w \leq
\mu^-+\frac{\omega}{2^{s^{\ast}+1}}\right\}\right|\\
&\qquad \leq \e_1\int_{A(t)}
\left[\left(w-k\right)_-\right]^2\,\,dxdt \\
&\qquad \qquad \leq
C\left(\frac{\omega}{2^{s^{\ast}}}\right)^2\left(\nu^{\ast}+\frac{{\e_1}^{3-2a}}{(3-2a)(1-a)^2}\right)\left|Q_R\left(\theta_0,0,1-\frac{\rho}{2}\right)\right|.
\end{aligned}
\end{equation*}
To prove the lemma, we take $\nu^{\ast}$ and $\e_1$ so small that
\begin{equation*}
4C\left(\frac{\nu^{\ast}}{\e_1}+\frac{{\e_1}^{2-2a}}{(3-2a)(1-a)^2}\right)\leq
\nu^{\star}
\end{equation*}
for $\nu^{\star}$ in $(0,1)$.
\end{proof}
Now we show that given $\nu^{\star}$, it determines a level
$\mu^-+\frac{\omega}{2^{s^{\ast}+1}}$ and a cylinder so that the
measure of the set where $w$ is below such a level can be made
smaller than $\nu^{\star}$, on that particular cylinder. Hence, for
sufficiently small number $\nu^{\star}$, we have new powerful
assumption like to that of Lemma \eqref{lem-Hoilder-1}. Therefore,
using the same arguments as in Lemma \eqref{lem-Hoilder-1} with
$\left(w-\left(\mu^-+\frac{\omega}{2^{s^{\ast}+1}}\right)\right)_-$
we can obtain the following result.
\begin{lemma}\label{lem-lower-bound}
The number $\nu^{\star}$ (and hence $s^{\ast}$) can be chosen so
that
\begin{equation*}
w(x,t)\geq \mu^-+\frac{\lambda\omega}{2^{s^{\ast}+2}} \qquad \qquad
\mbox{a.e.}\quad
Q_{\frac{R}{2}}\left(\theta_0,0,\frac{\rho}{2}\right)
\end{equation*}
for some $\lambda$ in $\left(0,\frac{1}{2}\right)$.
\end{lemma}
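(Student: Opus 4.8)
The statement is the lower–oscillation counterpart of Lemma~\ref{lem-Hoilder-1}, and I would prove it by repeating that De Giorgi iteration essentially verbatim, with the upper truncations $(w^{\ast}-l_k)_+$ replaced by the lower ones $(w^{\ast}-l_k)_-$. The preceding Lemma supplies precisely the needed input: for $k_0=\mu^-+\omega 2^{-s^{\ast}-1}$ we already know that
\begin{equation*}
\left|\left\{(x,t)\in Q_R\left(\theta_0,0,1-\tfrac{\rho}{2}\right):\,w(x,t)\le k_0\right\}\right|\le\nu^{\star}\left|Q_R\left(\theta_0,0,1-\tfrac{\rho}{2}\right)\right|,
\end{equation*}
and since $\nu^{\star}$ may be taken arbitrarily small by choosing $s^{\ast}$ large, this plays the role of the smallness hypothesis \eqref{eq-p-harnack-cond}. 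I would fix the universal $\lambda\in(0,\tfrac12)$ coming from the barrier $h_1$ of Lemma~\ref{lem-Hoilder-1} and set, for $k\ge0$,
\begin{equation*}
R_k=\frac{R}{2}+\frac{R}{2^{k+1}},\qquad l_k=\mu^-+\lambda\left(\frac{\omega}{2^{s^{\ast}+2}}+\frac{\omega}{2^{s^{\ast}+2+k}}\right),
\end{equation*}
so that $l_0=\mu^-+\lambda\omega 2^{-s^{\ast}-1}\le k_0$ and $l_k\downarrow\mu^-+\lambda\omega 2^{-s^{\ast}-2}$. With the cut-offs $\eta_k$ chosen as in Step~2 there and $w_k^{\ast}=(w^{\ast}-l_k)_-$, I would prove by induction on $k$ that $Z_k:=\theta_0^{\alpha}\,|\{(x,t)\in Q_{R_k}(\theta_0):\,w<l_k\}|$ satisfies $Z_k\le N^{-k}$ for a suitable $N>1$, together with the confinement of $\eta_k w_k^{\ast}$ to $\{0\le y\le\delta^k/4\}$.

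\textbf{The inductive step.} The $y$-confinement comes from the barriers $h_1,h_2$ exactly as in Step~4: their construction uses only $\D(y^a\D\cdot)=0$ and the maximum principle and is therefore insensitive to the sign of the truncation — equivalently one applies it to $v^{\ast}=M-w^{\ast}$, which solves the same extension equation and satisfies $0\le v^{\ast}\le M$ — and the smallness of $\nu^{\star}$ kills the Poisson–tail term just as $\rho$ did there. For the energy, I would write Lemma~\ref{lem-Loc-Ene-Est-1} for $(w^{\ast}-l_k)_-$ over $Q^{\ast}_{R_k}(\theta_0)$; the ``time'' term on its left-hand side is bounded below by $c\,(M-l_k)^{-\alpha}\int_{B_{R_k}\times\{t\}}[\eta_k(w-l_k)_-]^2\,dx$ because $\tfrac1m-1=-\alpha>0$ makes $\xi\mapsto(M-l_k+\xi)^{-\alpha}\xi$ increasing in $\xi$, so this truncation is in fact easier than the upper one of Step~5 (no splitting at $2m(M-k)$ is needed), while $M-l_k\ge M-\mu^-=\sup_{Q^{\ast}_{2R}}v^{\ast}$, which we may assume positive since otherwise $w^{\ast}\equiv M$ and the claim is trivial. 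Feeding in the step-$(k-1)$ confinement through the Poisson–convolution bound \eqref{eq-convolution-1}, performing the change of variables $\tau=\theta_0^{\alpha}t$, choosing $A$ small so that $c(4/(A\lambda))^{\alpha}>1$, and combining with the Sobolev inequality of Lemma~\ref{eq-Sobolev-Inequality}, I obtain the recursion
\begin{equation*}
\overline{Z}_{k+1}\le C'\,4^{2k}\,\overline{Z}_{k-1}^{\,1+\frac{2\sigma}{n+2\sigma}},\qquad \overline{Z}_k=Z_k.
\end{equation*}
Taking $\nu^{\star}$ (hence $s^{\ast}$) small enough that $\overline{Z}_{\overline k}\le N^{-\overline k}$ for the first few indices $\overline k$, and $N$ large (exactly as in the proof of Theorem~\ref{thm-L-infty}, via Lemma~4.1 of Chap.~I in \cite{Di}), the bounds $Z_k\le N^{-k}$ propagate and $Z_k\to0$; undoing the scaling yields $\big|\{(x,t)\in Q_{R/2}(\theta_0,0,\tfrac{\rho}{2}):\,w<\mu^-+\lambda\omega 2^{-s^{\ast}-2}\}\big|=0$, which is the assertion.

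\textbf{Main difficulty.} The conceptual content is entirely that of Lemma~\ref{lem-Hoilder-1}; the real work is the bookkeeping, and the sharpest point is the geometry of the cylinders. The measure input lives only on the thin slab $Q_R(\theta_0,0,1-\tfrac{\rho}{2})$, of temporal length comparable to $\rho\,\theta_0^{-\alpha}R^{2\sigma}$, whereas the conclusion is asserted on the essentially full–height cylinder $Q_{R/2}(\theta_0,0,\tfrac{\rho}{2})$; to bridge this I would run the nested iteration over sub-cylinders whose time–extent is matched to both, using crucially that the lower measure bound of Lemma~\ref{lem-overline-rho} — and the time–slice estimate of Lemma~\ref{lem-w-mu--} — hold uniformly for all $t\in[t^{\ast},0]$, not only on the thin slice, so that the measure smallness is in fact available on a cylinder of near-maximal height before the De Giorgi iteration is started. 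A secondary issue is to carry the constants $\theta_0^{\alpha}$, $M^{-\alpha}$, $(\lambda\omega)^2$ and $\delta$ produced by the energy inequality and absorb them consistently into $A$, $N$ and the final choice of $\nu^{\star}$, and to verify that the $\lambda$ obtained here may be taken equal to the one from the barrier $h_1$, so that Lemmas~\ref{lem-Hoilder-1} and~\ref{lem-lower-bound} combine into a single reduction–of–oscillation estimate.
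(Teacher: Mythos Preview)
Your proposal is correct and follows exactly the approach the paper indicates: the paper itself does not give a written-out proof of this lemma but simply says ``using the same arguments as in Lemma~\ref{lem-Hoilder-1} with $\left(w-\left(\mu^-+\frac{\omega}{2^{s^{\ast}+1}}\right)\right)_-$ we can obtain the following result,'' and your outline is a faithful and somewhat more detailed expansion of precisely that sentence. Your identification of the cylinder-geometry bookkeeping as the main point requiring care is apt, and your remedy (falling back on the uniform-in-$t$ slice estimates of Lemmas~\ref{lem-w-mu--} and~\ref{lem-overline-rho} rather than only the thin-slab measure bound) is the right one.
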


\begin{lemma}[Oscillation Lemma]\label{lem-oscillation}
There exist constants $\lambda^{\ast}>0$ and $\kappa\in(0,1)$ such
that if
\begin{equation*}
\osc_{Q^{\ast}_{2R}}w^{\ast}=\omega=\mu^+-\mu^-,
\end{equation*}
then
\begin{equation*}
\osc_{Q^{\ast}_{\frac{R}{4}}\left(\theta_0,0,1-\frac{\rho}{2}\right)}w^{\ast}\leq
\omega-\lambda^{\ast}=\kappa\omega.
\end{equation*}
\end{lemma}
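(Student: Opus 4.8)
The plan is a two–alternative (``De Giorgi'') argument: decide according to whether the measure condition \eqref{eq-p-harnack-cond} holds, in each case reduce the oscillation of the trace $w$ on $\{y=0\}$, and then lift the reduction to $w^{\ast}$ on a full $(n+1)$--dimensional cube by a weighted maximum principle against a rescaled copy of the barrier $h_1$ of Lemma \ref{lem-Hoilder-1}.

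\emph{Step 1: the dichotomy on $\{y=0\}$.} If \eqref{eq-p-harnack-cond} holds, Lemma \ref{lem-Hoilder-1} gives at once $w<\mu^{+}-\tfrac{\lam\omega}{4}$ on $Q_{R/2}(\theta_0)$, so $w$ is bounded away from its supremum. If \eqref{eq-p-harnack-cond} fails, I would split once more. When $H\le \tfrac{\omega}{2^{s_1}}$, the very definition of $H$ forces $w^{\ast}\ge \mu^{-}+\tfrac{\omega}{2^{s_0}}-\tfrac{\omega}{2^{s_1}}\ge \mu^{-}+\tfrac{\omega}{2^{s_0+1}}$ on $B^{\ast}_R\times[t^{\ast},0]$ (using $s_1>s_0$), so $w^{\ast}$ itself, not merely its trace, is already bounded away from its infimum on a cylinder containing the one in the statement, and nothing further is needed. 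When $H>\tfrac{\omega}{2^{s_1}}$, i.e. \eqref{eq-H=sup} holds, I would run the established chain Lemma \ref{lem-violate-1} $\to$ Lemma \ref{lem-w-mu--} $\to$ Lemma \ref{lem-overline-rho} $\to$ Lemma \ref{lem-Q-ast-small} $\to$ the subsequent measure lemma $\to$ Lemma \ref{lem-lower-bound}, which ends with $w\ge \mu^{-}+\tfrac{\lam\omega}{2^{s^{\ast}+2}}$ a.e.\ on $Q_{R/2}(\theta_0,0,\tfrac{\rho}{2})$. Collecting the three cases, there is a constant $c_0\in(0,1)$ depending only on the data (one may take $c_0=\min\{\tfrac{\lam}{4},\,2^{-s_0-1},\,\tfrac{\lam}{2^{s^{\ast}+2}}\}$) such that, over the time interval of $Q^{\ast}_{R/4}(\theta_0,0,1-\tfrac{\rho}{2})$ and on $B_{R/2}$, either $w\le \mu^{+}-c_0\omega$ on $\{y=0\}$, or $w\ge \mu^{-}+c_0\omega$ on $\{y=0\}$.

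\emph{Step 2: lifting to $w^{\ast}$.} Suppose we are in the alternative $w\le \mu^{+}-c_0\omega$ on $B_{R/2}\times\{t\}$ for the relevant $t$ (the other alternative is symmetric, and the subcase where the estimate already holds for $w^{\ast}$ on all of $B^{\ast}_R$ needs no argument). Let $h_{1,R/2}(x,y)=h_1\!\left(\tfrac{2x}{R},\tfrac{2y}{R}\right)$ be the rescaling of the barrier of Step 1 of Lemma \ref{lem-Hoilder-1}: it solves $\D(y^a\D h_{1,R/2})=0$ in $B^{\ast}_{R/2}$, equals $0$ on $\{y=0\}$, equals $1$ on $\partial B^{\ast}_{R/2}\cap\{y>0\}$, and satisfies $h_{1,R/2}\le 1-2\lam$ on $B^{\ast}_{R/4}$. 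For each fixed admissible $t$, compare the two $y^a$--harmonic functions $w^{\ast}-(\mu^{+}-\tfrac12 c_0\omega)$ and $-\tfrac12 c_0\omega+c_0\omega\, h_{1,R/2}$ on $B^{\ast}_{R/2}$: on $\{y=0\}$ the first is $\le -\tfrac12c_0\omega$ (by Step 1) and the second equals $-\tfrac12c_0\omega$; on $\partial B^{\ast}_{R/2}\cap\{y>0\}$ the first is $\le \tfrac12 c_0\omega$ (since $w^{\ast}\le\mu^{+}$ there, as $Q^{\ast}_R(\theta_0)\subset Q^{\ast}_{2R}$) and the second equals $\tfrac12 c_0\omega$. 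The maximum principle for $\D(y^a\D\,\cdot)=0$ then yields $w^{\ast}\le \mu^{+}-2\lam c_0\omega$ on $B^{\ast}_{R/4}$, uniformly in $t$. Hence $\sup_{Q^{\ast}_{R/4}(\theta_0,0,1-\frac{\rho}{2})}w^{\ast}\le \mu^{+}-2\lam c_0\omega$ while the infimum is $\ge \mu^{-}$; the symmetric argument in the other alternative gives $\inf\ge\mu^{-}+2\lam c_0\omega$ while $\sup\le\mu^{+}$. In either case
\[
\osc_{Q^{\ast}_{R/4}(\theta_0,0,1-\frac{\rho}{2})}w^{\ast}\le \omega-2\lam c_0\,\omega,
\]
which is the assertion with $\lam^{\ast}=2\lam c_0\,\omega$ and $\kap=1-2\lam c_0$.

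\emph{What is routine and what is not.} The routine part is the bookkeeping: checking that $Q^{\ast}_{R/4}(\theta_0,0,1-\tfrac{\rho}{2})$ lies inside $B^{\ast}_{R/4}$ crossed with the time intervals on which the base estimates of Step 1 are valid --- this reduces to the elementary inequalities $(\tfrac R4)^{2\sigma}<R^{2\sigma}$ and $\tfrac{\rho}{2}<1$ together with the standing assumption $\theta_0^{\alpha}\ge 1$ --- and verifying that $c_0$, hence $\kap$, is built only from the structural constants $\lam,\rho,s_0,s_1,s^{\ast}$. The real obstacle is Step 2: the oscillation decay is manufactured on the hyperplane $\{y=0\}$, where the solution of the original nonlocal problem lives, whereas the conclusion concerns $w^{\ast}$ on a genuinely $(n+1)$--dimensional cube, so one must rule out that oscillation is regained as $y$ increases; this is exactly what the comparison with $h_{1,R/2}$ and the weighted maximum principle control, and it is the point at which passing to the extension $v^{\ast}$ pays off.
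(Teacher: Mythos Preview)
Your proof is correct and follows essentially the same two--alternative scheme as the paper: apply Lemma~\ref{lem-Hoilder-1} or Lemma~\ref{lem-lower-bound} to lower the oscillation of the trace on $\{y=0\}$, then lift this to $w^{\ast}$ on $B^{\ast}_{R/4}$ by comparing with a $y^a$--harmonic barrier (the paper names the barriers $h_3,h_4$, which are precisely affine rescalings of your $h_{1,R/2}$). The only cosmetic difference is that you explicitly dispose of the easy subcase $H\le\omega/2^{s_1}$, which the paper handles implicitly by declaring ``For the remainder of this section we assume that \eqref{eq-H=sup} holds''.
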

\begin{proof}
We first suppose that
\begin{equation*}
\left|\left\{(x,t)\in
  Q_R(\theta_0); w(x,t)>\mu^+-\frac{\omega}{2}\right\}\right|<\varepsilon|Q_R(\theta_0)|
\end{equation*}
for sufficiently small $\varepsilon>0$. Then, from the Lemma
\eqref{lem-Hoilder-1}, we obtain
\begin{equation*}
w(x,t)<\mu^+-\frac{\lambda\omega}{4} \qquad \qquad \mbox{in}\quad
Q_{\frac{R}{2}}(\theta_0)\left(\supset
Q_{\frac{R}{2}}\left(\theta_0,0,1-\frac{\rho}{2}\right)\right).
\end{equation*}
Let's consider the function $h_3$ defined by:
\begin{equation*}
\begin{cases}
\begin{aligned}
&\nabla(y^a\nabla h_3)=0 \qquad\qquad \mbox{in
$B^{\ast}_{\frac{R}{2}}$}\\
&\quad h_3=\mu^+ \qquad \qquad \mbox{on $\partial
B^{\ast}_{\frac{R}{2}}\cap \{y>0\}$}\\
& h_3=\mu^+-\frac{\lambda\omega}{4} \qquad \qquad \mbox{on $y=0$}.
\end{aligned}
\end{cases}
\end{equation*}
Then, we have
\begin{equation*}
w^{\ast}\leq h_3 \qquad \qquad \mbox{in
$Q^{\ast}_{\frac{R}{2}}\left(\theta_0,0,1-\frac{\rho}{2}\right)$}
\end{equation*}
from the maximum principle. Since $h_3\leq \mu^+-\lambda_1$ in
$B^{\ast}_{\frac{R}{4}}$ for some
$0<\lambda_1<\frac{\lambda\omega}{4}$, we obtain that
\begin{equation*}
\osc_{Q^{\ast}_{\frac{R}{4}}\left(\theta_0,0,1-\frac{\rho}{2}\right)}w^{\ast}\leq
\mu^+-\lambda_1-\mu^-=\omega-\lambda_1.
\end{equation*}
Next, we assume that
\begin{equation*}
\left|\left\{(x,t)\in
  Q_R(\theta_0); w(x,t)>\mu^+-\frac{\omega}{2}\right\}\right|>\varepsilon_1|Q_R(\theta_0)|
\end{equation*}
for some $\varepsilon_1>0$. From the Lemma \eqref{lem-lower-bound},
we have
\begin{equation*}
w(x,t)\geq \mu^-+\frac{\lambda\omega}{2^{s^{\ast}+2}} \qquad \qquad
\mbox{in}\quad
Q_{\frac{R}{2}}\left(\theta_0,0,\frac{\rho}{2}\right).
\end{equation*}
We also consider the function $h_4$ defined by:
\begin{equation*}
\begin{cases}
\begin{aligned}
&\nabla(y^a\nabla h_4)=0 \qquad\qquad \mbox{in
$B^{\ast}_{\frac{R}{2}}$}\\
&\quad h_4=\mu^- \qquad \qquad \mbox{on $\partial
B^{\ast}_{\frac{R}{2}}\cap \{y>0\}$}\\
& h_4=\mu^-+\frac{\lambda\omega}{2^{s^{\ast}+2}} \qquad \qquad
\mbox{on $y=0$}.
\end{aligned}
\end{cases}
\end{equation*}
Then, we have
\begin{equation*}
w^{\ast}\geq h_4 \qquad \qquad \mbox{in
$Q^{\ast}_{\frac{R}{2}}\left(\theta_0,0,1-\frac{\rho}{2}\right)$}
\end{equation*}
from the minimum principle. Since $h_4\geq \mu^-+\lambda_2$ in
$B^{\ast}_{\frac{R}{4}}$ for some $0<\lambda_2\leq
\frac{\lambda\omega}{2^{s^{\ast}+2}}$, we have
\begin{equation*}
\osc_{Q^{\ast}_{\frac{R}{4}}\left(\theta_0,0,1-\frac{\rho}{2}\right)}w^{\ast}\leq
\mu^+-(\mu^-+\lambda_1)=\omega-\lambda_1.
\end{equation*}
By taking $\lambda^{\ast}=\min\{\lambda_1,\lambda_2\}$, we get a
desired conclusion
\begin{equation*}
\osc_{Q^{\ast}_{\frac{R}{4}}\left(\theta_0,0,1-\frac{\rho}{2}\right)}w^{\ast}\leq
\omega-\lambda^{\ast}=\kappa\omega.
\end{equation*}
\end{proof}
\begin{thm}[H\"older estimates]\label{thm-Hoilder}
There exist constants $\gamma>1$ and $\beta\in(0,1)$ that can be
determined a priori only in terms of the data, such that for all the
cylinders
\begin{equation*}
\osc_{Q^{\ast}_{r}\left(\theta_0,0,1-\frac{\rho}{2}\right)}w^{\ast}\leq
\gamma\omega\left(\frac{r}{R}\right)^{\beta} \qquad \qquad (0<r\leq
R).
\end{equation*}
\end{thm}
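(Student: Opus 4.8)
The plan is to upgrade the one–step oscillation decay of Lemma~\ref{lem-oscillation} into a genuine H\"older modulus by iterating it over a nested sequence of intrinsically scaled cylinders and then interpolating between consecutive scales. After the translation $(x_0,t_0)=(0,0)$, I would set $\omega_0=\omega=\osc_{Q^{\ast}_{2R}}w^{\ast}$ and $R_0=R$, and define recursively $R_{j+1}=\gamma_0R_j$ for a fixed $\gamma_0\in(0,1)$ depending only on $\rho$, $\sigma$ and the data, together with $\omega_{j+1}=\kappa\omega_j$, so that $\omega_j=\kappa^j\omega$. At each stage one re-applies Lemma~\ref{lem-oscillation} on the cylinder at scale $R_j$; this is legitimate as long as the standing hypothesis $(\omega_j/A)^{\alpha}\ge1$ holds, and since $\omega_j\le\omega_0$ and $\alpha=1-\frac1m<0$ this reduces to $\omega\le A$. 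The complementary case $\omega\ge A$ is harmless: $w^{\ast}$ is bounded on $Q^{\ast}_{2R}$, so $\osc_{Q^{\ast}_r(\theta_0,0,1-\rho/2)}w^{\ast}\le\omega\le\gamma\,\omega\,(r/R)^{\beta}$ already holds once $\gamma$ is chosen large enough, and we may therefore assume $\omega\le A$ from now on.

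The substance of the argument, and the step I expect to be the main obstacle, is the bookkeeping of the intrinsic geometry. Because the diffusion coefficient $D(v)=|v|^{1-\frac1m}$ blows up as $v\to0$, the natural cylinders are anisotropically rescaled in time by the factor $\theta_0^{-\alpha}$ attached to the oscillation, so one cannot simply contract the spatial radius by a universal factor at each step: the choice of $R_{j+1}$ must be made compatible with the intrinsic time length of the cylinder delivered by Lemma~\ref{lem-oscillation} at the previous stage. Concretely, I would verify the inclusion $Q^{\ast}_{2R_{j+1}}\subseteq Q^{\ast}_{R_j/4}\!\left(\theta_0,0,1-\frac{\rho}{2}\right)$ and its non-starred analogue, comparing the time extents $(2R_{j+1})^{2\sigma}$ and $\frac{\rho}{2}(R_j/4)^{2\sigma}\theta_0^{-\alpha}$; the standing assumption $(\omega/A)^{\alpha}\ge1$, equivalently $0<\theta_0^{-\alpha}\le1$, is exactly what is used to keep the cylinders nested under the fixed ratio $\gamma_0$, and making this quantitative is the delicate point of the proof. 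Granting these inclusions, Lemma~\ref{lem-oscillation} applies at every step, so that $\osc_{Q^{\ast}_{R_j}(\theta_0,0,1-\rho/2)}w^{\ast}\le\omega_j=\kappa^j\omega$ for all $j\ge0$.

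Finally I would convert this discrete decay into the stated continuous estimate by a routine across-scales interpolation. Given $0<r\le R$, choose the integer $j\ge0$ with $R_{j+1}<r\le R_j$; then $Q^{\ast}_r(\theta_0,0,1-\frac\rho2)\subseteq Q^{\ast}_{R_j}(\theta_0,0,1-\frac\rho2)$, whence $\osc_{Q^{\ast}_r(\theta_0,0,1-\rho/2)}w^{\ast}\le\kappa^j\omega$. Setting $\beta=\frac{\ln(1/\kappa)}{\ln(1/\gamma_0)}$, which lies in $(0,1)$ after, if necessary, shrinking $\gamma_0$ below $\kappa$, and using $R_{j+1}=\gamma_0^{\,j+1}R<r$, one has $\kappa^j=(\gamma_0^{\,j})^{\beta}=\gamma_0^{-\beta}(\gamma_0^{\,j+1})^{\beta}\le\gamma_0^{-\beta}(r/R)^{\beta}$, so that $\osc_{Q^{\ast}_r(\theta_0,0,1-\rho/2)}w^{\ast}\le\gamma\,\omega\,(r/R)^{\beta}$ with $\gamma=\gamma_0^{-\beta}>1$. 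Since $\gamma_0$, $\kappa$, and hence $\beta$ and $\gamma$, depend only on the data, this is the asserted inequality; and as announced in the introduction, the H\"older continuity of $v$ on a smaller cube then follows from $w^{\ast}(x,0,t)=M-v^{\ast}(x,0,t+t_0)$ together with $v^{\ast}(x,0,t)=v(x,t)$.
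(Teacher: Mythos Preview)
Your approach---iterate the Oscillation Lemma over a nested family of shrinking cylinders and then interpolate between consecutive scales to convert geometric decay into a H\"older modulus---is exactly the paper's. The paper is simply terser: it iterates at dyadic radii $R/2^{k}$ with the \emph{same} $\theta_{0}$ throughout, observes from the proof of Lemma~\ref{lem-oscillation} that $\lambda^{\ast}\le\lambda\omega/2^{s^{\ast}+2}$ forces $\kappa>\tfrac12$, and therefore takes $\beta=-\log_{2}\kappa\in(0,1)$ and $\gamma=1/\kappa$ directly, without introducing a separate spatial ratio $\gamma_{0}$ or needing to shrink it below $\kappa$.

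One step in your write-up does not work as stated. Your dismissal of the regime $\omega\ge A$ via ``$\osc\le\omega\le\gamma\,\omega\,(r/R)^{\beta}$ once $\gamma$ is chosen large enough'' fails: for fixed $\gamma$ and $\beta>0$ the right-hand side tends to $0$ as $r\to0$, while the left-hand side need not, so no finite $\gamma$ covers all $r\in(0,R]$. The paper sidesteps this entirely by imposing $(\omega/A)^{\alpha}\ge1$ (equivalently $\omega\le A$, since $\alpha<0$) as a standing hypothesis at the opening of the section and never leaving that regime; there is no complementary case to treat in this theorem. If you want to keep that sentence, the honest fix is to note that $\omega\le M$ a priori and that $A$ can be taken at least $M$, so the case $\omega>A$ is vacuous---not that the inequality is trivially satisfied.
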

\begin{proof}
From the Oscillation Lemma (Lemma \eqref{lem-oscillation}), we
obtain
\begin{equation*}
\osc_{Q^{\ast}_{\frac{R}{2^k}}\left(\theta_0,0,1-\frac{\rho}{2}\right)}w^{\ast}\leq
\kappa^k\omega.
\end{equation*}
Let now $0<r\leq R$ be fixed. There exists a non-negative integer
$k$ such that
\begin{equation*}
\frac{R}{2^{k+1}}\leq r\leq \frac{R}{2^k}.
\end{equation*}
This implies the inequalities
\begin{equation*}
-\log_2\left(\frac{r}{R}\right)\leq k+1
\end{equation*}
and
\begin{equation*}
\kappa^n\leq
\frac{1}{\kappa}\kappa^{-\log_2\left(\frac{r}{R}\right)}=\frac{1}{\kappa}\left(\frac{r}{R}\right)^{-\log_2\kappa}.
\end{equation*}
Since $\lambda^{\ast}\leq \frac{\lambda\omega}{2^{s^{\ast}+2}}$, we
have $\frac{1}{2}<\kappa<1$. Hence
\begin{equation*}
\osc_{Q^{\ast}_{\frac{R}{2^k}}\left(\theta_0,0,1-\frac{\rho}{2}\right)}w^{\ast}\leq
\gamma\omega\left(\frac{r}{R}\right)^{\beta}
\end{equation*}
where $\gamma=\frac{1}{\kappa}>1$ and $0<\beta=-\log_2\kappa<1$. To
conclude the proof we observe that the cylinder
$Q^{\ast}_{r}\left(\theta_0,0,1-\frac{\rho}{2}\right)$ is included
in
$Q^{\ast}_{\frac{R}{2^k}}\left(\theta_0,0,1-\frac{\rho}{2}\right)$.
\end{proof}
We finish with the proof of Theorem \ref{thm-main}. \\
\begin{proof}[\textbf{Proof of Theorem \ref{thm-main}}]
From the Theorem \eqref{thm-Hoilder}, the solution $v^{\ast}$ of the
problem \eqref{eq-Holder} satisfies
\begin{equation*}
\osc_{Q^{\ast}_{r}\left(\theta_0,0,1-\frac{\rho}{2}\right)}v^{\ast}=\osc_{Q^{\ast}_{r}\left(\theta_0,0,1-\frac{\rho}{2}\right)}w^{\ast}\leq
\gamma\omega\left(\frac{r}{R}\right)^{\beta} \qquad (0<r\leq R)
\end{equation*}
since $v^{\ast}=M-w^{\ast}$. This gives that $v^{\ast}$ is
$C^{\beta}$ at $(x,0,t)$, and so $v$ is $C^{\beta}$ at $(x,t)$.
\end{proof}

\section{Asymptotic behaviour for the FDE with fractional powers}

\setcounter{equation}{0}
\setcounter{thm}{0}

\subsection{Special solutions and stabilization}
The asymptotic description is based on the existence of appropriate
solutions that serve as model for the behavior near extinction:
there is a self-similar solution of the form
\begin{equation}\label{eq-frac-FDE-self}
U(x,t;T)=(T-t)^{1/(1-m)}f(x)
\end{equation}
for a certain profile $f>0$, where $\vp=f^m$ is the solution of the
super-linear elliptic equation
\begin{equation*}
(-\La)^{\sigma}\vp(x)=\frac{1}{1-m}\vp(x)^p, \qquad p=\frac{1}{m}
\end{equation*}
such that $\vp>0$ in $\Omega$ with zero on $\R^n\bs\Omega$. Hence,
similarity means in this case the separate-variables form. The
existence and regularity of this solution depends on the exponent
$p$, indeed it exists for $p<(n+2\sigma)/(n-2\sigma)$, the Sobolev
exponent. Since $p=1/m$, this means that smooth separate-variables
solutions exist for
\begin{equation*}
\frac{n-2\sigma}{n+2\sigma}<m<1
\end{equation*}
an assumption that will be kept in the sequel. Note that the family
of solutions (\ref{eq-frac-FDE-self}) has a free parameter $T>0$.
\subsection{Stabilization}
The above family of solutions allows to describe the behavior of
general solutions near their extinction time.
\begin{lemma}\label{lem-fde-lower-bound}
Let $u$ be a non-negative solution of \eqref{eq-frac-FDE}. Suppose
that the extinction time $T^{\ast}=10$. Then
\begin{equation*}
\sup_{\Omega}v(x,2)=\sup_{\Omega}u^m(x,2)\geq c_0
\end{equation*}
where $c_0$ is a uniform constant.
\end{lemma}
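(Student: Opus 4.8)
The plan is to read off a lower bound for the $L^{(m+1)/m}$ mass of $v(\cdot,2)$ from the finite-extinction estimate already proved, and then compare that mass with the supremum over $\Omega$.

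First I would invoke Lemma~\ref{lem-Estimates-Finite-Extinction-Time} with $v=u^m$ at the time $t=2$. Since the extinction time is $T^{\ast}=10>2$, the lemma gives
\[
8=T^{\ast}-2\le C\left(\int_{\R^n}v^{\frac{m+1}{m}}(x,2)\,dx\right)^{\frac{1-m}{1+m}},
\]
and raising both sides to the power $\frac{1+m}{1-m}>0$, together with the fact that $v(\cdot,2)$ vanishes on $\R^n\bs\Omega$, yields
\[
\int_{\Omega}v^{\frac{m+1}{m}}(x,2)\,dx\ \ge\ \left(\frac{8}{C}\right)^{\frac{1+m}{1-m}}>0 .
\]
In particular the solution is not yet extinct at $t=2$, which is consistent with $T^{\ast}=10$.

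Next I would pass from this integral bound to the pointwise statement. Because $v(\cdot,2)\ge 0$ is supported in $\Omega$, the elementary estimate of an integral by its maximum gives
\[
\int_{\Omega}v^{\frac{m+1}{m}}(x,2)\,dx\ \le\ |\Omega|\,\Big(\sup_{\Omega}v(x,2)\Big)^{\frac{m+1}{m}},
\]
where $\sup_{\Omega}v(\cdot,2)$ is finite by Theorem~\ref{thm-L-infty} (if it were infinite the assertion would be trivial). Combining the two displays and solving for the supremum,
\[
\sup_{\Omega}v(x,2)\ \ge\ \left[\frac{1}{|\Omega|}\left(\frac{8}{C}\right)^{\frac{1+m}{1-m}}\right]^{\frac{m}{m+1}}=:c_0 ,
\]
which is the desired conclusion. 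Since the constant $C$ of Lemma~\ref{lem-Estimates-Finite-Extinction-Time} depends only on $m,n,\sigma$ and $|\Omega|$ only on $\Omega$, the number $c_0$ depends only on the data.

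I do not expect a serious obstacle here; the one point to check carefully is that the hypotheses of Lemma~\ref{lem-Estimates-Finite-Extinction-Time} are legitimately available at $t=2$ — namely enough regularity of $v=u^m$ for the differential identity \eqref{eq-finite-extinction-1} and the subsequent integration to be valid, together with $\int_{\R^n}v^{\frac{m+1}{m}}(x,2)\,dx<\infty$ — both of which follow from the a priori class in which the solution is constructed and from Theorem~\ref{thm-L-infty}. All remaining steps are elementary.
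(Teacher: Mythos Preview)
Your proof is correct and follows essentially the same route as the paper: both combine the finite-extinction estimate of Lemma~\ref{lem-Estimates-Finite-Extinction-Time} at $t=2$ with the trivial bound $\int_{\Omega}v^{(m+1)/m}\le |\Omega|\,(\sup_{\Omega}v)^{(m+1)/m}$. The only cosmetic difference is that the paper argues by contradiction (if $\sup_{\Omega}v(\cdot,2)<\varepsilon$ then $T^{\ast}\le 5$), while you argue directly and obtain an explicit value for $c_0$.
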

\begin{proof}
Suppose that $\sup_{\Omega}v(x,2)<\e$ for sufficiently small $\e>0$.
Then, we have
\begin{equation*}
\left(\int_{\R^n}v^{\frac{m+1}{m}}(x,2)\,\,dx\right)^{\frac{1-m}{1+m}}\leq
\e^{\frac{1-m}{m}}|\Omega|^{\frac{1-m}{1+m}}.
\end{equation*}
Hence, from the estimates on Finite Extinction Time, (Lemma
\ref{lem-Estimates-Finite-Extinction-Time}),
\begin{equation*}
T^{\ast}-2\leq C \e^{\frac{1-m}{m}}|\Omega|^{\frac{1-m}{1+m}}.
\end{equation*}
If $\e>0$ is small enough, then we have
\begin{equation*}
T^{\ast}\leq 5,
\end{equation*}
which is a contradiction to the assumption $T^{\ast}=10$.
\end{proof}
Using the same arguments as in Section 5 in \cite{KL}, we can obtain
the following two Corollaries.
\begin{cor}\label{cor-lowerbound-function}
Let $u$ be a nonnegative solution of \eqref{eq-frac-FDE} and
$u_0^{m}$ be a super-solution for the fractional power of the
Laplacian. Suppose that the extinction time $T^{\ast}=10$. There
exist a function $\psi(x)>0$ in $\Omega$ and a constant $r\in(0,1)$
such that
\begin{equation*}
v(x,t)\geq C\psi(x) \qquad \mbox{for $2-r\leq t\leq 2$}
\end{equation*}
where $C$ is uniform.
\end{cor}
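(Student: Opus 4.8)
The plan is to turn the super-solution hypothesis into a monotonicity in $t$, reducing the claim to a lower bound for $v$ at the single time $t=2$; to read off from $T^{\ast}=10$ and the finite-extinction estimates that $v(\cdot,2)$ is quantitatively non-trivial; to upgrade this, via the H\"older continuity of Theorem \ref{thm-main}, to a pointwise lower bound on an interior ball; and finally to spread it to all of $\Omega$ by the infinite-speed-of-propagation of the fractional fast-diffusion flow, the separating-variables profile playing the role of $\psi$. For the first step, note that since $u_0^m=v_0$ is a super-solution for $(-\La)^\sigma$, the function $v_0$ (constant in time) is a super-solution of \eqref{eq-nlocal-PME-1}; the comparison principle of Section~3 gives $v(\cdot,t)\le v_0$, and applying it to $v(\cdot,\cdot+h)$ shows $t\mapsto v(x,t)$ is non-increasing on $\Omega$. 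Hence $v(\cdot,2)\le v(\cdot,t)$ for $t\le 2$, and it suffices to produce a fixed $\psi>0$ in $\Omega$ and a constant $C>0$ depending only on the data, with $v(\cdot,2)\ge C\psi$; then every $r\in(0,1)$ works.

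\emph{Non-triviality and a local bound at $t=2$.} Lemma \ref{lem-Estimates-Finite-Extinction-Time} applied at $t=2$ with $T^{\ast}=10$ gives $\int_{\R^n}v^{(m+1)/m}(x,2)\,dx\ge c_1$ for a constant depending only on the data, while Theorem \ref{thm-L-infty} bounds $v(\cdot,2)$ by some $\Lambda$ in terms of the data; an elementary distribution-function estimate then yields $\mu_0,\delta_1>0$ (data only) with $\bigl|\{x\in\Omega:\ v(x,2)\ge\mu_0\}\bigr|\ge\delta_1$. Picking a point $x_2$ of this super-level set at distance $\ge d_0$ from $\partial\Omega$ (possible for $d_0$ small, since the collar $\{0<\operatorname{dist}(\cdot,\partial\Omega)<d_0\}$ has small measure) and invoking the interior H\"older estimate of Theorem \ref{thm-main} (whose modulus is controlled by $\Lambda$ and the data), we obtain $\rho_0>0$, bounded below by a data-dependent constant, such that $v(x,2)\ge\mu_0/2$ on $B_{\rho_0}(x_2)$.

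\emph{Spreading to all of $\Omega$.} The decisive input is a quantitative infinite-speed-of-propagation statement for the fractional fast-diffusion equation: there are $\tau_0\in(0,1]$ and $c'>0$, depending only on the data, such that whenever $\hat v$ is a non-negative solution of the equation in \eqref{eq-nlocal-PME-1} with extinction time $>\tau_0$ and $\hat v(\cdot,0)\ge\kappa$ on a ball $B_\rho(x')\subset\subset\Omega$ with $\rho$ and $\operatorname{dist}(x',\partial\Omega)$ bounded below as above, then $\hat v(\cdot,\tau_0)\ge c'\kappa\,\vp$ in $\Omega$, where $\vp>0$ is the separating-variables profile solving $(-\La)^\sigma\vp=\tfrac{1}{1-m}\vp^{1/m}$, $\vp=0$ off $\Omega$ (Section~5.1); the sharp boundary rate $\operatorname{dist}(\cdot,\partial\Omega)^\sigma$ of solutions of the fractional Dirichlet problem is what makes $\vp$ the correct comparison function, and uniformity over $x'$ follows by compactness. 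Applying this to $\hat v(\cdot,s)=v(\cdot,2+s)$, whose extinction time is $T^{\ast}-2=8>\tau_0$, with $\kappa=\mu_0/2$, gives $v(\cdot,2+\tau_0)\ge c'(\mu_0/2)\vp$, and by the monotonicity of the first step $v(\cdot,2)\ge v(\cdot,2+\tau_0)\ge C\vp$ with $C=c'\mu_0/2$. This proves the assertion with $\psi=\vp$ and any $r\in(0,1)$.

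The crux is the infinite-speed estimate of the last step: converting an interior lower bound at one instant into a lower bound valid up to $\partial\Omega$ a little later, uniformly over the admissible data and with the sharp boundary rate $\operatorname{dist}(\cdot,\partial\Omega)^\sigma$. Establishing this Harnack-type positivity for the fractional FDE, together with the boundary regularity it rests on, is precisely the ingredient taken over from Section~5 of \cite{KL}; the remaining steps are the elementary reductions and the energy/extinction estimates already available in this paper.
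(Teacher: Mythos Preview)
Your reduction to a pointwise lower bound on an interior ball at time $t=2$ is correct and parallels the paper's argument (the paper combines Lemma~\ref{lem-fde-lower-bound} with the H\"older estimate to get $v\ge c_1$ on a space--time box $B_{\tilde r}(x_0)\times[2-\varepsilon\tilde r^{2\sigma},2]$ directly, rather than reducing to a single time via monotonicity). The divergence is in the last step, where you spread the lower bound from the ball to all of $\Omega$.

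You invoke an ``infinite-speed-of-propagation'' lemma for the fractional FDE: if $\hat v(\cdot,0)\ge\kappa$ on an interior ball then $\hat v(\cdot,\tau_0)\ge c'\kappa\,\vp$ on $\Omega$. This is left as a black box attributed to \cite{KL}, but that reference treats local fully nonlinear operators, not the fractional fast diffusion flow; the transfer is nontrivial, and in any case this parabolic Harnack-type positivity is not what the paper imports from \cite{KL}. As stated your lemma does not even use the super-solution hypothesis, so you are asking for strictly more than the corollary requires.

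The paper's route is both simpler and self-contained. The time-monotonicity you derive in Step~1 is equivalent, via $(v^{1/m})_t=-(-\La)^{\sigma}v$, to the statement that $(-\La)^{\sigma}v(\cdot,t)\ge 0$ for every $t$: the super-harmonicity of the initial datum is preserved by the flow. This is the real content of the hypothesis. With it in hand, the spreading step is purely \emph{elliptic}: define $\psi$ by $(-\La)^{\sigma}\psi=0$ in $\Omega\setminus\overline{B_{\tilde r}(x_0)}$, $\psi=1$ on $B_{\tilde r}(x_0)$, $\psi=0$ on $\R^n\setminus\Omega$. For each $t$ in the time interval, $v(\cdot,t)$ is a super-solution of $(-\La)^{\sigma}$ on $\Omega\setminus\overline{B_{\tilde r}(x_0)}$ and dominates $c_1\psi$ on the complement, so the elliptic comparison principle gives $v(\cdot,t)\ge c_1\psi$ on all of $\Omega$. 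No forward-in-time propagation, no appeal to the separating-variables profile, and no back-in-time monotonicity are needed; the function $\psi$ in the statement is this fractional-harmonic function, not $\vp$. You had the key ingredient (preserved super-harmonicity) in your first step but used only its weaker consequence.
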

\begin{proof}
Combining the Lemma \eqref{lem-fde-lower-bound} with Theorem
\eqref{thm-Hoilder}, we can take a point $x_0\in\Omega$ and
constants $\varepsilon, \sigma\in (0,1)$ and $\tilde{r}>0$ such that
\begin{equation*}
v(x,t)\geq c_1>0
\end{equation*}
for $(x,t)\in
B_{\tilde{r}}(x_0)\times[2-\varepsilon\tilde{r}^{2\sigma},2]$. Let
$\psi$ be the solution of
\begin{equation*}
\begin{cases}
\begin{aligned}
(-\La)^{\sigma}&\psi(x)=0 \qquad \qquad \mbox{in
$\Omega\bs\overline{B_{\tilde{r}}(x_0)}$}\\
\psi&=1 \qquad \qquad \qquad \mbox{on $B_{\tilde{r}}(x_0)$}\\
\psi&=0 \qquad \qquad \qquad \mbox{on $\R^n\bs\Omega$}.
\end{aligned}
\end{cases}
\end{equation*}
On the other hand, $v_t$ satisfies
\begin{equation*}
\begin{cases}
\begin{aligned}
mv^{1-\frac{1}{m}}(-\La)^{\sigma}v_t&+(v_{t})_t=\frac{m-1}{mg}v_t^2\qquad\,\,
\mbox{in $\Omega$}\\
v_t&=0 \qquad \qquad \qquad \qquad \mbox{on $\R^n\bs\Omega$}.
\end{aligned}
\end{cases}
\end{equation*}
Since the constant function $f=0$ is a solution of above problem and
$(-\La)^{\sigma}u_0^m\geq 0$, we have $(-\La)^{\sigma}v(x,t)\geq 0$
for all $0<t<T^{\ast}$. Hence
\begin{equation*}
c_1\psi(x,t)\leq v(x,t) \qquad \qquad \mbox{in
$\Omega\times[2-\varepsilon\tilde{r}^{2\sigma},2]$}.
\end{equation*}
Therefore, we have
\begin{equation*}
C\psi(x)\leq v(x,t) \qquad \qquad \mbox{in $\Omega\times[2-r,2]$}
\end{equation*}
where $C=c_1$ and $r=\varepsilon\tilde{r}^{2\sigma}$.
\end{proof}
\begin{cor}\label{cor-sub-lowerbound-function}
Let $u$ be a nonnegative solution of \eqref{eq-frac-FDE} and
$u_0^{m}$ be a super-solution for the fractional power of the
Laplacian with the extinction time $T^{\ast}$. For $0<t<T^{\ast}$
\begin{equation*}
u\left(x,\frac{T^{\ast}-t}{10}s+t\right)\geq
C\psi(x)(T^{\ast}-t)^{\frac{1}{1-m}} \qquad (2-\delta\leq s\leq 2).
\end{equation*}
\end{cor}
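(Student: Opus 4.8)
The plan is to reduce Corollary \ref{cor-sub-lowerbound-function} to Corollary \ref{cor-lowerbound-function} by translating the flow in time so that it starts at $t$, and then applying the scale invariance of Section~2 (the transformation \eqref{sol-tau-invari}) with the dilation parameter tuned so that the rescaled solution has extinction time exactly $10$. Fix $t\in(0,T^{\ast})$. Since \eqref{eq-frac-FDE} is autonomous in time, $\hat u(x,s):=u(x,t+s)$ is again a nonnegative solution of \eqref{eq-frac-FDE} on $0\le s<T^{\ast}-t$, with extinction time $T^{\ast}-t$, initial datum $\hat u(\cdot,0)=u(\cdot,t)$, and $\hat u=0$ on $\R^n\bs\Omega$. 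Moreover $\hat u_0^{m}=u^{m}(\cdot,t)$ is still a super-solution for $(-\La)^{\sigma}$: this is precisely what is shown inside the proof of Corollary \ref{cor-lowerbound-function}, where one checks that $(-\La)^{\sigma}u_0^{m}\ge 0$ propagates to $(-\La)^{\sigma}u^{m}(\cdot,\tau)\ge 0$ for every $0<\tau<T^{\ast}$. Thus $\hat u$ satisfies all the hypotheses of Corollary \ref{cor-lowerbound-function}, the only discrepancy being that its extinction time is $T^{\ast}-t$ rather than $10$.

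Next apply \eqref{sol-tau-invari} to $\hat u$ with $L=1$ and $T=\tfrac{10}{T^{\ast}-t}$ (so that $K^{m-1}=T^{-1}$ forces $K=\big(\tfrac{10}{T^{\ast}-t}\big)^{1/(1-m)}$), that is, set
\[
\tilde u(x,s):=\Big(\tfrac{10}{T^{\ast}-t}\Big)^{\frac{1}{1-m}}\hat u\!\left(x,\tfrac{T^{\ast}-t}{10}\,s\right)=\Big(\tfrac{10}{T^{\ast}-t}\Big)^{\frac{1}{1-m}}u\!\left(x,t+\tfrac{T^{\ast}-t}{10}\,s\right).
\]
By scale invariance, $\tilde u$ is again a nonnegative solution of \eqref{eq-frac-FDE}; its extinction time equals $(T^{\ast}-t)\cdot\tfrac{10}{T^{\ast}-t}=10$; and $(-\La)^{\sigma}(\tilde u_0)^{m}$ is the positive multiple $K^{m}L^{-2\sigma}$ of $(-\La)^{\sigma}u^{m}(\cdot,t)$, hence nonnegative. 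Therefore $\tilde u$ meets all the hypotheses of Corollary \ref{cor-lowerbound-function} with $T^{\ast}=10$.

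Applying Corollary \ref{cor-lowerbound-function} to $\tilde u$ yields a function $\psi(x)>0$ in $\Omega$ and a constant $\delta\in(0,1)$ (playing the role of the $r$ there) such that $\tilde u^{m}(x,s)\ge C\psi(x)$ for $2-\delta\le s\le 2$. Undoing the dilation,
\[
u\!\left(x,t+\tfrac{T^{\ast}-t}{10}\,s\right)=\Big(\tfrac{T^{\ast}-t}{10}\Big)^{\frac{1}{1-m}}\tilde u(x,s)\ge\Big(\tfrac{T^{\ast}-t}{10}\Big)^{\frac{1}{1-m}}\big(C\psi(x)\big)^{1/m},\qquad 2-\delta\le s\le 2,
\]
which is exactly the assertion of Corollary \ref{cor-sub-lowerbound-function} once $10^{-1/(1-m)}$ and $C^{1/m}$ are absorbed into the constant and $\psi^{1/m}$ is relabelled $\psi$.

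The only genuinely delicate point — and the step I expect to be the main obstacle — is verifying that the structural hypotheses behind Corollary \ref{cor-lowerbound-function} survive both operations: nonnegativity and membership in the class of admissible solutions closed under dilations are immediate; the $\dot H^{\sigma}$-type a priori control is preserved because we only translate in time and dilate by fixed positive constants; and the super-solution property $(-\La)^{\sigma}(\text{initial datum})^{m}\ge 0$ is left unchanged by the translation and is merely multiplied by the positive constant $K^{m}L^{-2\sigma}$ under the dilation. Beyond that, the argument is only bookkeeping of exponents through the scaling relation $K^{m-1}=L^{2\sigma}T^{-1}$ of Section~2 (taken here with $L=1$ and $T=\tfrac{10}{T^{\ast}-t}$), which simultaneously generates the prefactor $(T^{\ast}-t)^{1/(1-m)}$ and the affine time substitution $s\mapsto t+\tfrac{T^{\ast}-t}{10}\,s$.
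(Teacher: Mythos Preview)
Your proof is correct and follows essentially the same route as the paper's: translate time so the flow starts at $t$ (the paper's $t_0$), rescale via \eqref{sol-tau-invari} with $L=1$ and $T=\tfrac{10}{T^{\ast}-t}$ so the new extinction time is $10$, and then invoke Corollary~\ref{cor-lowerbound-function}. The paper carries this out at the level of $v=u^{m}$ rather than $u$, but your $\tilde u^{m}$ is exactly the paper's $\tilde v$, and your additional verification that the super-solution hypothesis $(-\La)^{\sigma}u^{m}(\cdot,t)\ge 0$ persists under both operations simply makes explicit what the paper's ``one can easily check'' leaves implicit.
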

\begin{proof}
One can easily check that the scaled function $\tilde{g}$ defined by
\begin{equation*}
\tilde{v}(x,t)=\left(\frac{T^{\ast}-t_0}{10}\right)^{\frac{-m}{1-m}}v\left(x,\left(\frac{T^{\ast}-t_0}{10}\right)t+t_0\right)
\end{equation*}
is a solution of \eqref{eq-nlocal-PME-1} with the finite extinction
time $10$. Then, there exists a function $\psi(x)>0$ such that
\begin{equation*}
C\psi(x)\leq
\tilde{v}(x,\tilde{t})=\left(\frac{T^{\ast}-t_0}{10}\right)^{\frac{-m}{1-m}}v\left(x,\frac{T^{\ast}-t_0}{10}\tilde{t}+t_0\right)
\qquad (2-r\leq \tilde{t} \leq 2)
\end{equation*}
from the Corollary \eqref{cor-lowerbound-function}. Substituting in
this inequality $v=u^m$, we easily come to a conclusion.
\end{proof}
In the end of this section, we look at an asymptotic behavior of a
sequence of time-slice for normalized solutions. We first put
\begin{equation*}
u(x,t)=(T^{\ast}-t)^{\frac{1}{1-m}}\overline{u}(x,\tau) \qquad
\qquad \left(\tau=\ln\frac{T^{\ast}}{T^{\ast}-t}\right).
\end{equation*}
Then, the problem \eqref{eq-frac-FDE} is mapped into:
\begin{equation}\label{eq-normalize-fde}
\begin{cases}
\begin{aligned}
\frac{\overline{u}}{1-m}=&(-\La)^{\sigma}\overline{u}^m+\overline{u}_{\tau}
\qquad \qquad \qquad \mbox{in $\Omega$}\\
\overline{u}(x,\tau)&=0 \qquad \qquad \qquad \qquad \quad \mbox{on
$\R^n\bs\Omega$}\\
\overline{u}(x,0)=&\left(\frac{1}{T^{\ast}}\right)^{\frac{1}{1-m}}u_0(x)
\qquad \qquad \mbox{in $\Omega$}.
\end{aligned}
\end{cases}
\end{equation}
Observe that the new time $\tau$ ranges from $0$ to $\infty$. We now
state the main Theorem in this section.
\begin{thm}
Under the above assumptions on $u_0$ and $m$, we have the following
property near the extinction time of a solution $u(x,t)$: for any
sequence $\{u(x,t_n)\}$, we have a subsequence $t_{n_k}\to T^{\ast}$
and a $\vp(x)$ such that
\begin{equation*}
\lim_{k\to\infty}(T^{\ast}-t_{n_k})^{-1/(1-m)}\big|u(x,t_{n_k})-U(x,t_{n_k};T^{\ast})\big|\to
0
\end{equation*}
uniformly in compact subset of $\Omega$ for
$U(x,t;T^{\ast})=(T^{\ast}-t)^{1/(1-m)}\vp^{1/m}(x)$ where $\vp$ is
a eigen-function of fully nonlinear equation
\begin{equation*}
\begin{cases}
\begin{aligned}
(-\La)^{\sigma}\vp &=\frac{1}{1-m}\vp^{\frac{1}{m}} \qquad \mbox{in
$\Omega$}\\
\vp &=0 \qquad \qquad \mbox{on $\R^n\bs\Omega$}\\
\vp &>0 \qquad \qquad \mbox{in $\Omega$}.
\end{aligned}
\end{cases}
\end{equation*}
\end{thm}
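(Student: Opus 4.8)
The plan is to pass to the self‑similar (normalized) variables of \eqref{eq-normalize-fde}, prove that the normalized orbit is precompact as the new time tends to infinity, and then identify every cluster point with a solution of the stationary fractional equation by means of a Lyapunov functional. First I would reformulate the statement: writing $u(x,t)=(T^{\ast}-t)^{1/(1-m)}\overline{u}(x,\tau)$ with $\tau=\ln\frac{T^{\ast}}{T^{\ast}-t}$, the function $\overline{u}$ solves \eqref{eq-normalize-fde} on $\Omega\times(0,\infty)$, and setting $\overline{v}=\overline{u}^{m}$ this reads $\partial_\tau(\overline{v}^{1/m})=\frac{1}{1-m}\overline{v}^{1/m}-(-\La)^{\sigma}\overline{v}$. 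Since $(T^{\ast}-t_{n_k})^{-1/(1-m)}u(x,t_{n_k})=\overline{u}(x,\tau_{n_k})$ and $(T^{\ast}-t_{n_k})^{-1/(1-m)}U(x,t_{n_k};T^{\ast})=\varphi^{1/m}(x)$, the assertion is equivalent to: for every $\tau_n\to\infty$ there is a subsequence along which $\overline{u}(\cdot,\tau_n)\to\varphi^{1/m}$ in $C_{loc}(\Omega)$, where $\varphi>0$ in $\Omega$, $\varphi=0$ on $\R^n\setminus\Omega$, and $(-\La)^{\sigma}\varphi=\frac{1}{1-m}\varphi^{1/m}$.

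Next I would collect the a priori bounds needed for compactness. From Lemma \ref{lem-lemma-2} and the identity $\overline{v}=(T^{\ast}-t)^{-m/(1-m)}v$ one gets that $\int_\Omega \overline{v}^{(m+1)/m}(\cdot,\tau)\,dx$ is bounded uniformly in $\tau$; combined with the $L^\infty$ bound of Theorem \ref{thm-L-infty} applied to the normalized flow this gives $0\le\overline{v}(\cdot,\tau)\le C$ for all large $\tau$, while Corollaries \ref{cor-lowerbound-function} and \ref{cor-sub-lowerbound-function} provide a fixed $\psi>0$ in $\Omega$ and $c>0$ with $\overline{v}(\cdot,\tau)\ge c\,\psi$ for all large $\tau$. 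On any compact $K\subset\Omega$ the diffusion coefficient $\overline{v}^{\,1-1/m}$ is then trapped between two positive constants, so \eqref{eq-normalize-fde} is uniformly parabolic there; hence the interior H\"older estimate of Theorem \ref{thm-main} (equivalently, the oscillation decay of Theorem \ref{thm-Hoilder}), applied to the time‑translated flows $\overline{u}(\cdot,\cdot+\tau_n)$, yields a H\"older bound on each interior space‑time compactum with constants independent of $\tau_n$, and therefore $\{\overline{u}(\cdot,\cdot+\tau_n)\}_n$ is precompact in $C_{loc}(\Omega\times\R)$.

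I would then run a Lyapunov/dissipation argument. Put
\[
\mathcal F(v)=\tfrac12\,\|v\|_{\dot H^{\sigma}}^{2}-\tfrac{m}{(1-m)(m+1)}\int_\Omega v^{\frac{m+1}{m}}\,dx .
\]
Differentiating along \eqref{eq-normalize-fde}, using the equation and $\partial_\tau v^{1/m}=\tfrac1m v^{1/m-1}\partial_\tau v$, gives
\[
\frac{d}{d\tau}\,\mathcal F(\overline{v}(\tau))=-\frac1m\int_\Omega \overline{v}^{\,\frac1m-1}\,|\partial_\tau\overline{v}|^{2}\,dx\le0 ,
\]
the sign being correct since $\tfrac1m-1>0$ and $\overline{v}\ge0$. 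As the negative part of $\mathcal F$ is controlled by $\|\overline{v}\|_{L^\infty}$ and $|\Omega|$ by the previous step, $\mathcal F(\overline{v}(\tau))$ is non‑increasing and bounded below, so it converges to a finite limit and
\[
\int_{\tau_0}^{\infty}\!\int_\Omega \overline{v}^{\,\frac1m-1}\,|\partial_\tau\overline{v}|^{2}\,dx\,d\tau<\infty
\]
for some $\tau_0$. Finally, fixing $\tau_n\to\infty$, passing to a subsequence, and using the precompactness above, the translates $\overline{u}(\cdot,\cdot+\tau_n)$ converge in $C_{loc}(\Omega\times\R)$ to an eternal solution $\overline{u}^{\infty}$; interior parabolic regularity (valid since the equation is uniformly parabolic on interior compacta) makes this convergence strong enough to pass to the limit in the dissipation integral, forcing $\partial_\tau(\overline{u}^{\infty})^{m}\equiv0$. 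Hence $\varphi:=(\overline{u}^{\infty})^{m}$ is independent of $\tau$ and solves $(-\La)^{\sigma}\varphi=\frac{1}{1-m}\varphi^{1/m}$ in $\Omega$, with $\varphi=0$ on $\R^n\setminus\Omega$ and $\varphi\ge c\,\psi>0$ in $\Omega$ inherited from the bounds; since $\overline{u}(\cdot,\tau_n)=\overline{u}(\cdot,0+\tau_n)\to\varphi^{1/m}$ in $C_{loc}(\Omega)$ along the subsequence, the reformulation of the first step gives the theorem, with $\varphi$ an eigenfunction of the stated fully nonlinear problem.

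The hard part will be the a priori estimates that are uniform as $\tau\to\infty$: the H\"older regularity coming from Theorem \ref{thm-Hoilder} degenerates wherever $\overline{v}$ is small (there $\overline{v}^{\,1-1/m}\to\infty$), so the quantitative interior lower bound of Corollaries \ref{cor-lowerbound-function} and \ref{cor-sub-lowerbound-function} is exactly what keeps the oscillation‑decay constants independent of $\tau$ on interior compacta; a secondary technical point is checking that the convergence of the translated flows is in a topology strong enough to pass to the limit in $\int \overline{v}^{\,1/m-1}|\partial_\tau\overline{v}|^{2}$, for which one again invokes the interior uniform parabolicity and the corresponding interior regularity estimates.
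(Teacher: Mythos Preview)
Your approach is essentially the paper's: pass to the normalized variables, use the same Lyapunov functional $\mathcal F$ with the same dissipation identity $\frac{d}{d\tau}\mathcal F(\overline v)=-\frac1m\int_\Omega\overline v^{\,(1-m)/m}\overline v_\tau^2\,dx$, and combine the H\"older estimate (Theorem \ref{thm-Hoilder}) for equicontinuity with the interior lower bound from Corollaries \ref{cor-lowerbound-function}--\ref{cor-sub-lowerbound-function} to rule out the trivial limit. The difference is in the final identification step. You take space--time translates, extract an eternal limit, and then argue that the vanishing tail of the dissipation integral forces $\partial_\tau(\overline u^\infty)^m\equiv 0$; as you flag, this needs at least weak $L^2$ convergence of $\partial_\tau\overline v$ on interior compacta. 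The paper avoids this entirely: it tests the normalized equation against $\eta\in C_c^\infty(K)$ at time $\tau_n$, bounds the single bad term by H\"older,
\[
\Bigl|\tfrac1m\!\int_K\overline v^{(1-m)/m}\overline v_\tau\,\eta\,dx\Bigr|
\le C\Bigl(\int_\Omega\overline v^{(1-m)/m}\overline v_\tau^2\,dx\Bigr)^{1/2}
= C\,(-g'(\tau_n))^{1/2},
\]
and passes to the limit directly in the weak formulation along a sequence with $g'(\tau_n)\to0$ (guaranteed since $g$ is nonincreasing and bounded below). This is shorter and needs no regularity of $\partial_\tau\overline v$, only the pointwise (in $\tau$) dissipation. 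One small correction to your write-up: Theorem \ref{thm-L-infty} is stated for the original flow, not the normalized one with the extra zero-order term; the paper gets the uniform bound $0\le\overline v\le C$ instead by comparing $u$ with the separable supersolution of Lemma \ref{lem-fde-extinction} and rescaling.
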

\begin{proof}
The function $\overline{v}=\overline{u}^m$ satisfies the equation
\begin{equation}\label{eq-overline-v}
\frac{\overline{v}^{\frac{1}{m}}}{1-m}=(-\La)^{\sigma}\overline{v}+\frac{1}{m}\overline{v}^{\frac{1-m}{m}}\overline{v}_{\tau}
\end{equation}
in $\Omega\times(0,\infty)$, with $\overline{v}=0$ on
$\R^n\bs\Omega$. Now define the functional
\begin{equation*}
F(f)=\int_{\R^n}\left(\frac{1}{2}f(-\La)^{\sigma}f-\frac{m}{(1-m)(1+m)}f^{\frac{1+m}{m}}\right)\,dx
\end{equation*}
and $g(\tau)=F(\overline{v}(\cdot,\tau))$. Then a simple calculation
yields
\begin{equation*}
g'(\tau)=-\frac{1}{m}\int_{\R^n}\overline{v}^{\frac{1-m}{m}}(x,\tau)\overline{v}^2_{\tau}(x,\tau)\,dx,
\end{equation*}
the right side being non-positive since $\overline{v}\geq 0$. Lemma
\ref{lem-lemma-2} shows that
$\int_{\R^n}\overline{v}^{\frac{1+m}{m}}\,dx$ bounded in $\tau$, so
$g(\tau)$ is bounded below. Therefore $\lim_{\tau\to\infty}g(\tau)$
exists and there exists a sequence of times $\tau_n\to\infty$ such
that $g'(\tau_n)\to 0$.\\
\indent From the Lemma \ref{lem-fde-extinction}, we now translate
the estimate information in terms of $\overline{v}$ to the estimate
\begin{equation*}
0\leq \overline{v}\leq C
\end{equation*}
where $C>0$ is a universal constant. In addition, for each $\tau$,
$\overline{v}(\cdot,\tau_n)$ is equicontinuous in every compact set
$K\subset\Omega$ from the Lemma \eqref{thm-Hoilder}. Hence, every
subsequence, again labeled $\tau_n$,
$\{\overline{v}(\cdot,\tau_n)\}$ has a subsequence
$\{\overline{v}(\cdot,\tau_{n_k})\}$ that converges to some function
$\vp(x)$ uniformly on every compact subset of $K$. Also its limit is
non-trivial since $\overline{v}(x,\tau)\geq C\psi(x)$ when
$\tau>\ln\frac{5}{4}$. Note also, by Lemma \ref{lem-lemma-2}, that
$\int_{\R^n}\overline{v}^{\frac{1+m}{m}}(x,\tau)\,dx$ is monotone
decreasing, hence
\begin{equation*}
\lim_{\tau\to\infty}\int_{\R^n}\overline{v}^{\frac{1+m}{m}}(x,\tau)\,dx=\int_{\R^n}\vp^{\frac{1+m}{m}}(x)\,dx.
\end{equation*}
Multiply equation \eqref{eq-overline-v} by any test function
$\eta\in C^{\infty}_c(K)$ and integrate in space, $x\in K$. Then,
for each $\tau_{n_k}$,
\begin{equation}\label{eq-stability}
\frac{1}{m}\int_{K}\overline{v}^{\frac{1-m}{m}}\overline{v}_{\tau}\eta\,dx=\int_{K}\overline{v}\big[-(-\La)^{\sigma}\eta\big]\,dx+\frac{1}{1-m}\int_{K}\overline{v}^{\frac{1}{m}}\eta\,dx
\end{equation}
from the Theorem 1.2 in \cite{Gu}(Integration by part). Since the
absolute value of the left hand side of \eqref{eq-stability} is
bounded above by
\begin{equation}\label{eq-bound-1}
\left(\int_{\R^n}\eta^{\frac{2(1+m)}{m}}\,dx\right)^{\frac{m}{2(m+1)}}\left(\int_{\R^n}\overline{v}^{\frac{(1-m)(1+m)}{m}}\,dx\right)^{\frac{1}{2(m+1)}}\left(\int_{\R^n}\overline{v}^{\frac{1-m}{m}}\overline{v}^2_{\tau}\,dx\right)^{\frac{1}{2}}
\end{equation}
and the third term of \eqref{eq-bound-1} has limit zero as
$\tau_{n_k}\to \infty$, we get in the limit $\tau_{n_k}\to\infty$
\begin{equation*}
\int_{K}\vp(-\La)^{\sigma}\eta\,dxd=\frac{1}{1-m}\int_{K}\vp^{\frac{1}{m}}\eta\,dx.
\end{equation*}
which is weak formulation of the equation
\begin{equation}\label{eq-limit-pro-fde}
(-\La)^{\sigma}\vp=\frac{1}{1-m}\vp^{\frac{1}{m}}\qquad \mbox{in
$K$}.
\end{equation}
By the arbitrary choice of a compact subset $K$ in $\Omega$,
\eqref{eq-limit-pro-fde} holds in $\Omega$.
\end{proof}

{\bf Acknowledgement} Ki-Ahm Lee was supported by the Korea Research Foundation Grant funded by the Korean Government(MOEHRD, Basic Research Promotion Fund)( KRF-2008-314-C00023).

\end{document}